%
%
%


\documentclass[xcolor=dvipsnames,svgnames,table,reqno]{amsart}




\setlength{\marginparwidth}{0.5cm}
\usepackage[colorinlistoftodos]{todonotes}

\input xy
\xyoption{all}
\usepackage{accents}
\usepackage{epsfig}
\usepackage{xcolor}
\usepackage{amsthm}
\usepackage{amssymb}
\usepackage{bbm}
\usepackage{amsmath}
\usepackage{amscd}
\usepackage{amsopn}
\usepackage{graphicx}

\usepackage{tikz}
\usetikzlibrary{shapes,shadows,arrows}
\usepackage{tikz-cd}

\usepackage{xspace}

\usepackage{hhline}
\usepackage{easybmat}
\usepackage{caption}   
\usepackage{relsize}

\usepackage{url}
\usepackage{enumitem, hyperref}\hypersetup{colorlinks}

\usepackage{stmaryrd}

\usepackage[T1]{fontenc}







\colorlet{purpleB70}{blue!70!red}

\colorlet{orangeR65}{red!65!yellow}

\definecolor{red2}{HTML}{d41173}

\definecolor{neongreen}{HTML}{1bf702}

\definecolor{radicalred}{HTML}{FF355E}

\definecolor{denim}{HTML}{1560BD}

\definecolor{darkcyan}{rgb}{0.0, 0.55, 0.55}

\definecolor{cilek}{HTML}{FF43A4}

\definecolor{mor}{HTML}{9F00C5}


\definecolor{phlox}{rgb}{0.87, 0.0, 1.0}

\definecolor{fluorescentpink}{HTML}{FF1493}

\definecolor{napiergreen}{rgb}{0.16, 0.5, 0.0}

\definecolor{kellygreen}{rgb}{0.3, 0.73, 0.09}

\definecolor{parisgreen}{HTML}{ 50C878 }

\definecolor{palatinateblue}{rgb}{0.15, 0.23, 0.89}

\definecolor{ceruleanblue}{rgb}{0.16, 0.32, 0.75}

\definecolor{brandeisblue}{rgb}{0.0, 0.44, 1.0}

\definecolor{KLMblue}{HTML}{0FC0FC}

\definecolor{cinnamon}{rgb}{0.82, 0.41, 0.12}

\definecolor{darkorange}{rgb}{1.0, 0.55, 0.0}

\definecolor{darktangerine}{rgb}{1.0, 0.66, 0.07}

\definecolor{deepcarrotorange}{rgb}{0.91, 0.41, 0.17}

\definecolor{internationalorange}{HTML}{FF4F00}

\definecolor{persimmon}{HTML}{EC5800}

\definecolor{pumpkin}{HTML}{FF7518}



\definecolor{darkred}{rgb}{1,0,0} 
\definecolor{darkgreen}{rgb}{0,0.7,0}
\definecolor{darkblue}{rgb}{0,0,1}

\hypersetup{colorlinks,
linkcolor=darkblue,
filecolor=darkgreen,
urlcolor=darkred,
citecolor=darkgreen}

\makeatletter
\def\reflb#1#2{\begingroup
    #2%
    \def\@currentlabel{#2}%
    \phantomsection\label{#1}\endgroup
}
\makeatother


%
%
%
%




\numberwithin{equation}{section}
\newtheorem{Theorem}{Theorem}
\numberwithin{Theorem}{section}

\newtheorem{TheoremX}{Theorem}


\newtheorem   {Lemma}[Theorem]{Lemma}

\newtheorem   {Proposition}[Theorem]{Proposition}
\newtheorem   {Corollary}[Theorem]{Corollary}
\theoremstyle {definition}
\newtheorem   {Definition}[Theorem]{Definition}
\theoremstyle {remark}
\newtheorem   {Remark}[Theorem]{Remark}
\newtheorem   {Example}[Theorem]{Example}


\def    \eps    {\epsilon}

\newcommand{\CA}{{\mathcal A}}
\newcommand{\CC}{{\mathcal C}}

\newcommand{\CI}{{\mathcal I}}

\newcommand{\CS}{{\mathcal S}}

\newcommand{\supp}{\operatorname{supp}}

\newcommand{\id}{{\mathit id}}

\newcommand{\const}{{\mathit const}}

\newcommand{\fa}{{\mathfrak a}}

\newcommand{\ty}{\tilde{y}}

\newcommand{\tx}{\tilde{x}}
\newcommand{\hx}{\hat{x}}
\newcommand{\cx}{\check{x}}
\newcommand{\hy}{\hat{y}}
\newcommand{\cy}{\check{y}}
\newcommand{\tz}{\tilde{z}}
\newcommand{\cz}{\check{z}}
\newcommand{\hz}{\hat{z}}

\newcommand{\talpha}{\tilde{\alpha}}

\newcommand{\tH}{\tilde{H}}

\newcommand{\A}{{\mathcal A}}

\newcommand{\Ss}{{\mathcal S}}

\def    \F      {{\mathbb F}}

\def    \R      {{\mathbb R}}

\def    \Z      {{\mathbb Z}}
\def    \N      {{\mathbb N}}
\def    \Q      {{\mathbb Q}}

\def    \CP     {{\mathbb C}{\mathbb P}}

\def    \12     {{\frac{1}{2}}}

\def    \p      {\partial}

\def    \tr     {\operatorname{tr}}

\def    \SH     {\operatorname{SH}}
\def    \Sp     {\operatorname{Sp}}

\def    \HF     {\operatorname{HF}}

\def    \H      {\operatorname{H}}

\def    \CF      {\operatorname{CF}}

\def    \Leb     {\operatorname{Leb}}

\def    \sgn   {\mathit{sgn}}

\def    \hmu   {\operatorname{\hat{\mu}}} 

\def    \MUM   {\operatorname{\mu_{\scriptscriptstyle{Morse}}}}

\def    \pfl   {\p_{\scriptscriptstyle{Fl}}}
\def    \inv   {\mathrm{inv}}

\def    \TSp     {\widetilde{\operatorname{Sp}}}

\newcommand \Cbar {C_{\scriptscriptstyle{bar}}}
\newcommand \nug {\nu_{\scriptscriptstyle{geom}}}
\newcommand \nua {\nu_{\scriptscriptstyle{alg}}}
\newcommand   \slope {\operatorname{\mathit{slope}}}
\newcommand   \rmax {r_{\max}}
\newcommand   \WW {\widehat{W}}


\begin{document}


\setlength{\smallskipamount}{6pt}
\setlength{\medskipamount}{10pt}
\setlength{\bigskipamount}{16pt}





\title [Invariant Sets and Hyperbolic Closed Reeb Orbits]{Invariant
  Sets and Hyperbolic Closed Reeb Orbits}

\author[Erman \c C\. inel\. i]{Erman \c C\. inel\. i}
\author[Viktor Ginzburg]{Viktor L. Ginzburg}
\author[Ba\c sak G\"urel]{Ba\c sak Z. G\"urel}
\author[Marco Mazzucchelli]{Marco Mazzucchelli}

\address{Erman \c C\. inel\. i\newline\indent Department of
  Mathematics, ETH Z\"urich\newline\indent R\"amistrasse 101, 8092
  Z\"urich, Switzerland} \email{erman.cineli@math.ethz.ch}

\address{Viktor Ginzburg\newline\indent Department of Mathematics, UC
  Santa Cruz\newline\indent Santa Cruz, CA 95064, USA}
\email{ginzburg@ucsc.edu}

\address{Ba\c sak G\"urel\newline\indent Department of Mathematics,
  University of Central Florida\newline\indent Orlando, FL 32816, USA}
\email{basak.gurel@ucf.edu}

\address{Marco Mazzucchelli\newline\indent Sorbonne Université,
  Université Paris Cité, CNRS, IMJ-PRG\newline\indent F-75005 Paris,
  France} \email{marco.mazzucchelli@imj-prg.fr}

\subjclass[2020]{53D40, 37J12, 37J55} 

\keywords{Periodic orbits, Reeb flows, Floer homology, symplectic
  homology}

\date{\today}

\thanks{The work is partially supported by the NSF CAREER award
  DMS-1454342 (BG), NSF grants DMS-2304207 (BG) and DMS-2304206 (VG),
  Simons Foundation grants 855299 (BG), 581382 (VG) and
  MP-TSM-00002529 (VG), ERC Starting Grant 851701 via a postdoctoral
  fellowship (E\c{C}), and the ANR grants CoSyDy, ANR-CE40-0014 and
  COSY, ANR-21-CE40-0002 (MM)}

\begin{abstract}
  We investigate the effect of a hyperbolic (or, more generally,
  isolated as an invariant set) closed Reeb orbit on the dynamics of a
  Reeb flow on the $(2n-1)$-dimensional standard contact sphere,
  extending two results previously known for Hamiltonian
  diffeomorphisms to the Reeb setting. In particular, we show that
  under very mild dynamical convexity type assumptions, the presence
  of one hyperbolic closed orbit implies the existence of infinitely
  many simple closed Reeb orbits. The second main result of the paper
  is a higher-dimensional Reeb analogue of the Le Calvez--Yoccoz
  theorem, asserting that no closed orbit of a non-degenerate
  dynamically convex Reeb pseudo-rotation is locally maximal, i.e.,
  isolated as an invariant set. The key new ingredient of the proofs
  is a Reeb variant of the crossing energy theorem.
\end{abstract}

\maketitle

\tableofcontents

\section{Introduction and main results}
\label{sec:intro+results}

\subsection{Introduction}
\label{sec:intro}
Compact invariant sets that are locally maximal, i.e.,
    the largest invariant sets in some neighborhood, play a
    fundamental role in dynamics. Among the key examples are
    hyperbolic periodic orbits.  In this paper, we investigate the
    impact of hyperbolic -- or, more generally, locally maximal --
    closed Reeb orbits on the dynamics of Reeb flows on the standard
    contact sphere $S^{2n-1}$.  We prove that, under a very mild
dynamical convexity type assumption, the presence of one hyperbolic
closed orbit implies the existence of infinitely many simple closed
Reeb orbits. In a related theorem, we show that for non-degenerate
dynamically convex Reeb flows on the sphere, the same is true when
there is a locally maximal closed Reeb orbit.  These results hold for
all dimensions $2n-1\geq 3$, but they are primarily of interest when
$2n-1\geq 5$: in dimension three the results can be derived from other
known facts in low-dimensional dynamics and three-dimensional contact
topology.

In symplectic dynamics, even a minimal input of localized
hyperbolicity, such as the presence of one or several hyperbolic
periodic orbits, can have a strong impact on non-local dynamics of the
system. Moreover, in some instances hyperbolicity can be replaced by
the weaker condition that the orbit is locally maximal, i.e., isolated
as an invariant set.  This phenomenon manifests itself in a variety of
disparate ways. We illustrate it by the next two examples, only
tangentially related to the main theme of the paper.

The first one is provided by results from, e.g., \cite{Ha, Xi}
asserting that $C^1$-generically the stable and unstable manifolds of
a hyperbolic periodic point of a Hamiltonian diffeomorphism $\varphi$
of a closed symplectic manifold $M$ have transverse non-empty
intersections. As a consequence, $\varphi$ has a horseshoe and
positive topological entropy. (This is a construction somewhat similar
to the $C^1$-closing lemma.) In dimension two, this is also true
$C^\infty$-generically, \cite{LCS1, LCS2}. We also note that for many
manifolds $M$, a Hamiltonian diffeomorphism has infinitely many
hyperbolic periodic points $C^\infty$-generically; see, e.g.,
\cite{CGG:Growth, CGG:Spec} and references therein.

The second example, more of a symplectic geometric nature, is that for
any Hamiltonian diffeomorphism $\varphi\colon M\to M$ with
sufficiently many hyperbolic periodic points the spectral norm
$\gamma(\varphi^k)$ of the iterates is bounded away from zero,
\cite{CGG:Growth}. The lower bound on the required number of
hyperbolic points depends only on the topology of $M$; for instance,
for $S^2$ just one such a point is sufficient.

In this paper we extend to dynamically convex Reeb flows on $S^{2n-1}$
two results about the effect of hyperbolic or locally maximal periodic
points on global dynamics of Hamiltonian diffeomorphisms of $\CP^n$.

The first of these results is that a Hamiltonian diffeomorphism of
$\CP^n$ with a hyperbolic periodic point necessarily has infinitely
many periodic points; see \cite[Thm.\ 1.1]{GG:hyperbolic} and also
\cite{Al1}. (In fact, the theorem holds for a broader class of closed
symplectic manifolds.) In dimension two, i.e., for $S^2=\CP^1$, this
theorem readily follows from the celebrated theorem of Franks from
\cite{Fr1, Fr2} asserting that an area preserving diffeomorphism of
$S^2$ with more than two periodic points must have infinitely many
periodic points; see also \cite{LeC:Fr}. Furthermore, when the stable
and unstable manifolds of the hyperbolic periodic point intersect
transversely and non-trivially, the resulting horseshoe immediately
provides infinitely many periodic points. Moreover, as we have pointed
out above, such intersections exist $C^1$-generically in all
dimensions and $C^\infty$-generically in dimension two. Hence, it is
essential that no intersection condition is imposed in \cite[Thm.\
1.1]{GG:hyperbolic}.

Recently, Franks' theorem has been (partially) generalized to a class
of symplectic manifolds of any dimension including $\CP^n$ under a
minor non-degeneracy requirement; see \cite{Sh:HZ} and also \cite{Al2,
  Al3, CGG:HZ}. This higher-dimensional variant of Franks theorem,
originally conjectured by Hofer and Zehnder in \cite{HZ}, implies
\cite[Thm.\ 1.1]{GG:hyperbolic} in all dimensions.  A different
conjecture inspired by that theorem goes beyond the orbit count and
asserts that a Hamiltonian dynamical system (in a very broad sense)
must have infinitely many simple periodic orbits whenever it has a
periodic orbit which is homologically or geometrically unnecessary,
e.g., a non-contractible or degenerate orbit for Hamiltonian
diffeomorphisms. We refer the reader to, for instance, \cite{Ba1, Ba2,
  Ba3, GG:nc, Gu:nc, Or1, Or2, Su:nc} for some sample results in this
direction.

In the spirit of \cite[Thm.\ 1.1]{GG:hyperbolic}, the first main
result of this paper (Theorem \ref{thm:main1}) is an extension of that
theorem to dynamically convex, in a very loose sense, Reeb flows on
$S^{2n-1}$. Thus this result can be viewed as the first step towards
the contact Franks' theorem in all dimensions.

Our second main result (Theorem \ref{thm:main2}) is a Reeb analogue of
the higher-dimensional Le Calvez--Yoccoz theorem, \cite[Thm.\
4.1]{GG:PR}, on invariant sets of pseudo-rotations. Roughly speaking,
a Hamiltonian pseudo-rotation is a Hamiltonian diffeomorphism with the
minimal possible number of periodic points, where the lower bound is
usually interpreted in terms of Arnold's conjecture. Pseudo-rotations
in dimension two have been extensively studied by dynamical systems
methods (see, e.g., \cite{A-Z, LCY} and references therein) and also
by holomorphic curves techniques, \cite{Br1,Br}. Recently, Floer
theoretic methods have been used to study Hamiltonian pseudo-rotations
in all dimensions; see \cite{CS, GG:PR, JS}. While official
definitions of a pseudo-rotation vary (see, e.g., \cite{CGG:CMD,
  CGG:St, Sh:JMD, Sh:MRL}), for $\CP^n$ they all amount to requiring a
Hamiltonian diffeomorphism to have exactly $n+1$ periodic points which
are then necessarily the fixed points. Pseudo-rotations can have quite
complicated dynamics. For instance, the Anosov--Katok conjugation
method, originally developed in \cite{AK} (see also \cite{FK}) yields
area-preserving diffeomorphisms of $S^2$ with exactly three ergodic
measures: the two fixed points and the area form. The conjugation
method was extended to higher dimensions in the Hamiltonian setting in
\cite{LRS}, leading in particular to a construction of Hamiltonian
diffeomorphisms of $\CP^n$ with exactly $n+2$ ergodic measures: the
fixed points and the volume.

Such pseudo-rotations are uniquely ergodic outside the fixed point
set, and one would expect every orbit to be either periodic or
dense. This is, however, not true. The celebrated Le Calvez--Yoccoz
theorem asserts that for an area-preserving pseudo-rotation of $S^2$
no fixed point is locally maximal, i.e., isolated as an invariant set;
see \cite{LCY} and also \cite{Fr, FM, Sal}.  In \cite[Thm.\
4.1]{GG:PR} this result is generalized to Hamiltonian pseudo-rotations
of $\CP^n$ in all dimensions. We note that here a pseudo-rotation is
not required to be non-degenerate, although this is the case in all
known examples.

The contact analogue of a pseudo-rotation is a Reeb flow with finitely
many periodic orbits. (For the sake of simplicity we are leaving aside
the requirement that the number of closed Reeb orbits is minimal; see
Section \ref{sec:results} and Remark \ref{rmk:number}.) Such flows can
also have very involved dynamics. For instance, ergodic
pseudo-rotations on $S^{2n-1}$ were constructed in \cite{Ka}; these
are Reeb flows on certain $C^\infty$-small perturbations of irrational
ellipsoids. In \cite{AGZ}, pseudo-rotations of $S^3$ with exactly
three invariant measures are constructed by applying the ``contact
suspension'' to Anosov--Katok pseudo-rotations of the disk.  In
Theorem \ref{thm:main2} we show that no closed Reeb orbit of a
dynamically convex, non-degenerate pseudo-rotation of $S^{2n-1}$ is
locally maximal.

In the next section we will precisely state our main results, discuss
them in more detail and also touch upon the key ingredients of the
proofs. Here we only mention that the central new component of the
proofs is a Reeb analogue of the Crossing Energy Theorem from
\cite{CGG:Entropy, GG:hyperbolic, GG:PR}.

\subsection{Main results}
\label{sec:results}
Both of our main results concern the Reeb flow of a contact form
$\alpha$ on $S^{2n-1}$ supporting the standard contact structure. To
state the theorems, recall that a contact form or a Reeb flow is said
to be dynamically convex when $\mu_-(x)\geq n+1$ for all (not necessarily simple, i.e., uniterated) closed
contractible Reeb orbits $x$, where $\mu_-$ is the lower
semi-continuous extension of the Conley--Zehnder index $\mu$. The Reeb
flow on a convex hypersurface in $\R^{2n}$ is dynamically convex,
\cite{HWZ}. Our first result requires a condition similar to dynamical
convexity but notably less restrictive. Namely, denote by $\hmu(x)$
the mean index of a closed Reeb orbit $x$ and by $2\nua(x)$ the
algebraic multiplicity of the eigenvalue 1 of the Poincar\'e return
map of $x$.  We refer the reader to Section \ref{sec:CZ} for a more
detailed discussion of the Conley--Zehnder and mean indices and of
dynamical convexity type conditions and for further references.

\begin{TheoremX}
  \label{thm:main1}
  \label{thm:hyperbolic}
  Assume that $(S^{2n-1},\alpha)$ has a hyperbolic (simple) closed
  Reeb orbit $z$ with $\hmu(z)>0$ and
  $$
  \mu_-(x)\geq \max\big\{3,\, 2+\nua(x)\big\}
  $$
  for all, not necessarily simple, periodic orbits $x$ with
  $\hmu(x)>0$. Then the Reeb flow of $\alpha$ has infinitely many
  simple periodic orbits.
\end{TheoremX}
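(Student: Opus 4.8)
The plan is to argue by contradiction: assume the Reeb flow of $\alpha$ has only finitely many simple closed orbits, and derive a contradiction from the presence of the hyperbolic orbit $z$ with $\hmu(z)>0$. The overall strategy mirrors the Hamiltonian argument of \cite{GG:hyperbolic}, transported to the contact setting via filtered (equivariant) symplectic homology of the filling $\widehat{W}=\C^n$ of $(S^{2n-1},\alpha)$, and it splits into two independent pieces that must then be combined.

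\textbf{Step 1: a homologically visible family of iterates of $z$.} Under the dynamical-convexity-type hypothesis $\mu_-(x)\ge\max\{3,2+\nua(x)\}$, every closed contractible orbit with positive mean index carries local Floer (equivariant symplectic) homology concentrated in degrees $\ge 3$ minus a correction by $\nua$; in particular, using the index recurrence / ``common jump'' lemma for the iterates of $z$ together with the fact that $z$ is hyperbolic (so $\nua(z^k)=0$ and $\HF^{\loc}$ of $z^k$ is one-dimensional, sitting in degree $k\hmu(z)+\text{const}$), one produces an infinite sequence of iterates $z^{k_j}$ whose local homology injects into the filtered equivariant symplectic homology of $\alpha$ in a degree window where, by the finiteness assumption and the index estimate, nothing can cancel it. Concretely, I would show that for suitable $k_j\to\infty$ the orbit $z^{k_j}$ is ``homologically essential'': its local homology class survives in $\SH^{S^1}$ of $(S^{2n-1},\alpha)$ (or in the positive/filtered part), exactly as in \cite[\S 4--5]{GG:hyperbolic}. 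The role of $\hmu(z)>0$ is to make these degrees march off to $+\infty$; the role of the index bound $\mu_-\ge 2+\nua$ (rather than $\mu_-\ge n+1$) is precisely what is needed to rule out degree collisions with the other (finitely many) simple orbits and their iterates.

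\textbf{Step 2: the Reeb Crossing Energy Theorem.} This is the main new ingredient and the principal obstacle. One needs: there exists $\hbar>0$ such that any Floer cylinder (for a small time-dependent perturbation of the action functional, or an ``$S^1$-family'' thereof) which is asymptotic at one end to an iterate $z^{k}$ lying in a fixed small isolating neighborhood $U$ of $z$ and which exits $U$ must have energy $\ge\hbar$, uniformly in $k$. The proof is the contact analogue of \cite[Thm.]{CGG:Entropy}, \cite{GG:hyperbolic}, \cite{GG:PR}: one works in the symplectization / completed filling, uses that $z$ is isolated as an invariant set (hyperbolicity gives this for free, with uniform hyperbolic constants for all iterates in $U$), and runs a compactness-and-exhaustion argument — if the energies went to zero one would, after passing to a limit and reparametrizing, obtain a nonconstant gradient trajectory (or a chain of them) entirely inside the chain-recurrent set of $z$ in $U$, contradicting isolation. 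The care points are: the noncompactness of the symplectization (handled by the usual a priori $C^0$-bound from the dynamical convexity type condition ensuring all relevant orbits lie in a fixed compact region), and the fact that the asymptotic orbit is a large iterate (handled, as in the Hamiltonian case, by the uniformity of the hyperbolic splitting on the iterated orbit).

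\textbf{Step 3: combine.} With $\hbar>0$ from Step 2 in hand, take $k_j$ large enough (from Step 1) that the action of $z^{k_j}$ — which grows linearly in $k_j$ — forces the relevant continuation/connecting maps in the filtered theory to have a ``barrier'' of width $<\hbar$ unavailable, so the homological class of $z^{k_j}$ cannot be connected to the other orbits by a single Floer trajectory; iterating over $j$ produces infinitely many distinct homologically essential local classes, hence infinitely many distinct simple orbits (each $z^{k_j}$ class must be ``carried'' by a simple orbit, and the finite list was assumed exhausted). This contradiction proves the theorem. I expect Step 2 to be where essentially all the real work lies — setting up the correct Floer-theoretic framework on $\widehat W$ with $S^1$-equivariance and proving the uniform-in-iteration energy lower bound — while Steps 1 and 3 are, modulo bookkeeping with indices and action filtrations, adaptations of the $\CP^n$ arguments in \cite{GG:hyperbolic}.
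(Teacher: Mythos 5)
Your Step 2 correctly identifies the crossing energy theorem as the main new ingredient (although the real difficulty there is not the noncompactness of the completion but the fact that the Hamiltonian counterpart $\tz$ of $z$ is \emph{not} isolated as an invariant set of the flow of an admissible Hamiltonian, since $X_H$ vanishes on $W$; the paper's Theorem \ref{thm:location}, confining low-energy cylinders to a fixed $r$-shell away from $W$ under the condition $h'''\geq 0$, is what rescues the usual pseudo-orbit argument). The genuine gap is in Steps 1 and 3: the homological mechanism you propose does not close. First, you cannot conclude that the local homology of $z^{k_j}$ ``survives'' in $\SH^{S^1}(\alpha)$: the crossing energy bound only excludes connecting trajectories of energy below a fixed $\sigma>0$, not cancellations by trajectories of larger energy, and the degree separation you invoke fails precisely for the resonant iterates $x_i^{k_{is}}$ of the other simple orbits, whose indices are forced by the index recurrence theorem to lie within distance roughly $n$ of $k_j\mu(z)$. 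In the paper this case (Case 1 of Lemma \ref{lemma:arrows}) is handled by an \emph{action} estimate -- the action difference is either smaller than $\sigma$ (excluded by Theorem \ref{thm:CE}) or tends to infinity -- and this isolates the class of $\hz^k$ only within a bounded action window, not in the direct limit.

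Second, even granting Step 1, your Step 3 is a non sequitur: infinitely many homologically essential classes carried by iterates $z^{k_j}$ of the \emph{single} simple orbit $z$ is not a contradiction (compare the irrational ellipsoid, where two simple orbits carry all of the positive equivariant symplectic homology), so ``hence infinitely many distinct simple orbits'' does not follow. What actually produces the contradiction in the paper is the vanishing of the (non-equivariant) symplectic homology of the star-shaped filling, converted in Theorem \ref{thm:vanishing} into a bound $\Cbar$, uniform in $k$, on the length of bars of $\HF^I(kH)$ for $I\subset(-\infty,\,kb]$; the crossing energy and index recurrence theorems then show that $\hz^k$ carries a class persisting over a window longer than $\Cbar$, which is impossible. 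This vanishing/uniform-barcode ingredient is entirely absent from your proposal, and without it (or some substitute computation of the filtered theory) the strategy does not yield infinitely many simple orbits. Note also that the hypothesis $\mu_-(x)\geq\max\{3,\,2+\nua(x)\}$ enters not to make iterates of $z$ essential but to create the degree gap of at least $2$ between $\hz^k$ and the non-resonant iterates (Cases 2--3 of the lemma), with hyperbolicity of $z$ supplying $\mu(z^k)=k\mu(z)$ so that these gaps can be tied to $d_s$.
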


The condition of the theorem is met when $\alpha$ is dynamically
convex and $2n-1\ge 3$. Indeed, then for all closed Reeb orbits
$\mu_-(x)\geq n+1\geq 3$ and $\mu_-(x)-\nua(x)\geq (n+1)-(n-1)\geq 2$
since $\nua(x)\leq n-1$; see Section \ref{sec:CZ}. Furthermore, note
that $\hmu(x)\geq (n+1)-(n-1)\geq 2$ by \eqref{eq:CZ-mean}. We
emphasize that we do not impose any non-degeneracy requirements on the
Reeb flow in Theorem \ref{thm:main1}.

In contrast, the non-degeneracy and essentially full dynamical
convexity conditions are essential for the proof of our second main
result:

\begin{TheoremX}
  \label{thm:main2}
  Assume that $(S^{2n-1},\alpha)$ with $2n-1\geq 3$ is dynamically
  convex, non-degenerate and its Reeb flow has only finitely many
  simple closed orbits, i.e., the flow is a Reeb pseudo-rotation. Then
  no closed orbit of the flow is locally maximal, i.e., isolated as an
  invariant set.
\end{TheoremX}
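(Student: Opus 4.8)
The plan is to argue by contradiction, adapting to the Reeb setting the proof of the Hamiltonian Le Calvez--Yoccoz theorem in \cite[Thm.~4.1]{GG:PR}, with the Reeb analogue of the Crossing Energy Theorem (the key new ingredient of this paper) playing the role of its Hamiltonian counterpart from \cite{CGG:Entropy, GG:hyperbolic, GG:PR}. Suppose some closed Reeb orbit of $\alpha$ is isolated as an invariant set; passing to the underlying simple orbit, which is literally the same subset of $S^{2n-1}$, we obtain a locally maximal simple closed orbit $z=z_1$, of period $T_z$. Since the flow is a non-degenerate pseudo-rotation, it has only finitely many simple closed orbits $z_1,\dots,z_r$, every iterate $z_j^k$ is non-degenerate, and all orbits are contractible. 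The ambient invariant is the filtered positive symplectic homology $\SH^+_*$ of the ball filling $(S^{2n-1},\alpha)$: as a $\Z$-graded space it is $\Q$ in degrees $n+1,\,n+3,\,n+5,\dots$ and zero otherwise, and the action filtration is computed by the complex generated by the iterated orbits $z_j^k$, placed in degree $\MUCZ(z_j^k)\geq n+1$ by dynamical convexity, whose differential counts Floer cylinders in the symplectization; such cylinders decrease the action, which here is the period, and the energy of a cylinder equals the corresponding period difference.

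First I would record the local picture at $z$. By non-degeneracy, $\HFL(z^k)=\Q$, concentrated in degree $\MUCZ(z^k)$, for every $k$; and dynamical convexity gives $\hmu(z)\geq 2>0$ (as in the discussion following Theorem~\ref{thm:main1}), so $\MUCZ(z^k)\to\infty$. Next I apply the Reeb Crossing Energy Theorem to $z$: there are an isolating neighborhood $U$ of the image of $z$ and a constant $c=c(U)>0$, \emph{uniform in the iteration} $k$, such that every Floer cylinder in the symplectization asymptotic to closed Reeb orbits lying in $U$ but not entirely contained in $U$ has energy $\geq c$. Fix $k$. The only closed Reeb orbit in $U$ with period close to $kT_z$ is $z^k$ itself, which carries no internal differential by non-degeneracy; consequently, in the action window $(kT_z-c,\ kT_z+c)$ the generator $z^k$ is a cycle that is not a boundary, so it represents a nonzero class $\zeta_k$ of degree $\MUCZ(z^k)$ in the subquotient homology $\SH^{+,\,(kT_z-c,\ kT_z+c)}_*$. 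Equivalently, in the barcode of the persistence module $a\mapsto \SH^{+,\,\leq a}_*$ the bar carrying $z^k$ has length $\geq c$, \emph{for every} $k$.

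The last step is to see that this uniform lower bound on infinitely many bars, sitting at the degrees $\MUCZ(z^k)\to\infty$, is incompatible with the rigidity of a non-degenerate dynamically convex Reeb pseudo-rotation, following the scheme of \cite[Thm.~4.1]{GG:PR}. Two structural facts are used. First, since there are finitely many simple orbits, the number of closed Reeb orbits with period in any window of fixed length is bounded independently of the window, so $\dim\SH^{+,\,(a,\,a+c)}_*$ is bounded uniformly in $a$. Second, non-degeneracy together with dynamical convexity forces the $\SH^+_*$-complex to be perfect (its differential vanishes), so the assignment $(j,k)\mapsto\MUCZ(z_j^k)$ is a bijection onto the admissible degrees $n+1,\,n+3,\dots$; hence $z^k$ occupies the unique (semi-infinite) bar in degree $\MUCZ(z^k)$, born at period $kT_z$, i.e.\ $z$ realizes the spectral invariant of $\SH^+_*$ in degree $\MUCZ(z^k)$ for every $k$. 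Because $\hmu(z)>0$, the degrees $\MUCZ(z^k)$ form a positive-density subset of the admissible degrees, so by monotonicity of the spectral invariants these grow with the definite asymptotic slope $T_z/\hmu(z)$; running this against the analogous information for $z_2,\dots,z_r$, the finiteness of the orbit set, and the uniform window bound on $\dim\SH^{+,\,(a,\,a+c)}_*$, one is driven to a numerical impossibility exactly as in the Hamiltonian case. (The mean-index-zero case, which needs a separate argument in \cite{GG:PR}, does not arise here, thanks to dynamical convexity.)

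I expect the main obstacle to be twofold. The analytic heart is the Reeb Crossing Energy Theorem itself: a lower energy bound for symplectization cylinders that cross a fixed isolating neighborhood of $z$, \emph{uniformly in $k$} rather than with a constant that degrades as $z$ is iterated --- this is precisely where local maximality as an invariant set, as opposed to mere non-degeneracy, is used, and it is the genuinely new input. The second, more bookkeeping-heavy, difficulty is porting the endgame of \cite[Thm.~4.1]{GG:PR} to the Reeb world: one must make precise the perfectness of the symplectic-homology complex of a non-degenerate dynamically convex Reeb pseudo-rotation and the monotonicity of its spectral invariants, and then convert barcode rigidity into the numerical contradiction. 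This should in fact be cleaner than in the Hamiltonian case, since here the gradings are honestly $\Z$-valued (no Conley--Zehnder wrap-around) and periods are honest positive reals, but matching the orbit indices against the admissible degrees still requires care.
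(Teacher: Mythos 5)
There is a genuine gap, and it lies exactly where you defer to ``a numerical impossibility exactly as in the Hamiltonian case.'' Your scheme tries to extract the contradiction from the internal structure of positive symplectic homology of the ball, via two unproven structural claims: that the $\SH^+$-complex of a non-degenerate dynamically convex Reeb pseudo-rotation is perfect, so that $(j,k)\mapsto\MUCZ(z_j^k)$ is a bijection onto $n+1,n+3,\dots$ (this does not follow from the hypotheses -- note also that in the non-equivariant theory each orbit contributes two generators in adjacent degrees, so that complex is never perfect; you would need the equivariant/linearized theory plus a real argument), and that spectral-invariant growth rates for the finitely many orbits then clash numerically. But even granting perfectness, no contradiction is actually produced: a bar of length $\geq c$ near action $kT_z$ is entirely consistent with $\SH^+$ of the ball, whose bar in each admissible degree is semi-infinite, and the slope/density considerations you sketch are not an argument (the Hamiltonian endgame of \cite[Thm.\ 4.1]{GG:PR} relies on structures specific to $\CP^n$ and does not port formally). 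The paper's contradiction comes from a different source altogether: the \emph{vanishing} of the full symplectic homology of the ball, quantified by Theorem \ref{thm:vanishing}, which gives a bound $\Cbar$, uniform in $k$ and in the action window below $kb$, on the length of every bar of $\HF^I(kH)$. Against this one plays the Crossing Energy Theorem \emph{together with} the Index Recurrence Theorem \ref{thm:IRT}: the iterations $k=k_{0s}$, $k_{is}$ are chosen so that every other generator is either at index distance $\geq 2$ from $\cz^k$ or $\hz^k$, or (in the critical case $j=k_{is}$) has action difference either smaller than $\sigma$ -- excluded by Theorem \ref{thm:CE} -- or tending to infinity. This makes one of $\cz^k,\hz^k$ a class surviving over a window longer than $\Cbar$, contradicting Theorem \ref{thm:vanishing}. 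The index-recurrence step is entirely absent from your proposal, and without it nearby-action orbits of adjacent index cannot be ruled out.

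A second, more analytic, inaccuracy: you formulate the crossing energy bound for ``Floer cylinders in the symplectization'' computing $\SH^+$. The paper explicitly does not prove such a statement and remarks that it is unclear how to even formulate the Crossing Energy Theorem at the level of symplectic homology; Theorem \ref{thm:CE} is stated and proved for Floer cylinders of (semi-)admissible Hamiltonians $kH$ on the completion $\WW$, and its proof hinges on the new location-constraint result (Theorem \ref{thm:location}) precisely because near $W$ the Hamiltonian vector field is small and cylinders could a priori drift into that region. So the analytic input you rely on is not the one established here, and basing the whole argument on $\SH^+$ rather than on the filtered Floer homology of the iterates $kH$ leaves both the analytic and the homological halves of your proof unsupported.
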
  

Since hyperbolic orbits are obviously locally maximal, this theorem
would be a stronger statement than Theorem \ref{thm:main1} if not for
the more restrictive conditions on the Reeb flow.

\begin{Remark}
  In fact, as is easy to see from the proof, we prove a slightly
  stronger result than Theorem \ref{thm:main2}. Namely, assume that
  all closed Reeb orbits $x$ of the Reeb flow on
  $(S^{2n-1\geq 3},\alpha)$ with $\hmu(x)>0$ are non-degenerate and
  $\mu(x)\geq n+1$ and that one of such orbits is locally
  maximal. Then the flow has infinitely many simple closed orbits with
  $\hmu>0$.
\end{Remark}

Both of these results are primarily of interest when $2n-1\geq 5$. In
dimension three, Theorem \ref{thm:main1} readily follows from the
existence of a global surface of section, \cite{HWZ}, and Franks'
theorem, \cite{Fr1, Fr2}. Furthermore the non-degenerate version of
Franks' theorem is known to hold in dimension three: every
non-degenerate Reeb flow on a closed contact 3-manifold has either
exactly two closed orbits, which are then elliptic, or infinitely
many; see \cite{CGHP, CDR}. Moreover, for the standard contact sphere
$S^3$ this is true without the non-degeneracy requirement, \cite{CGH,
  GHHM}.

Theorem \ref{thm:main2} holds for any non-degenerate Reeb flow with
finitely many periodic orbits on a closed 3-manifold.  The reason is
that the Le Calvez--Yoccoz theorem is in fact local. To be more
precise, an irrationally elliptic fixed point (or equivalently an
elliptic fixed point which is non-degenerate along with all iterates)
of an area preserving diffeomorphism of a surface is never locally
maximal. This is an immediate consequence of the topological proof of
the Le Calvez--Yoccoz theorem by Franks; see \cite[Prop.\ 3.1]{Fr} and
also \cite{FM}. For the sake of completeness, we have included a proof
in the Appendix (Section \ref{sec:LCY}) -- see Theorem \ref{thm:LCY-F}
and Corollary \ref{cor:LCY-F}, closely following Franks'
argument. (We refer the reader to \cite{CGP, FH, Pr}
    for other relevant symplectic results on the existence of
    invariant sets with certain properties in dimension three.)
However, to the best of our knowledge, nothing like this local result
is known in higher dimensions. In other words, it is not known if a
non-degenerate (with all iterates) elliptic fixed point of a
Hamiltonian diffeomorphism is necessarily locally maximal.

Theorem \ref{thm:main1} and its proof are closely related to the
multiplicity problem for simple closed Reeb orbits on
$S^{2n-1\geq 5}$, and this is where the dynamical convexity--type
condition becomes essential. This problem is an analogue of the Arnold
conjecture for Reeb flows on the standard contact sphere and concerns
with the minimal number of such orbits. Hypothetically, this number is
$n$. The question has been extensively studied and we refer the reader
to, e.g., \cite{CGG:Reeb-HZ, DL2W, GG:LS, GGMa, GK, Lo, LZ} for some
relevant results and further references. However, all these results
require the Reeb flow to meet some additional requirements. Without a
dynamical convexity--type condition (or symmetry), it is not even
known if in general a Reeb flow on $S^{2n-1\geq 5}$ must have more
than one simple closed Reeb orbit, or if there are more than two
simple closed Reeb orbits when the flow is non-degenerate; see
\cite[Rmk.\ 3.3]{Gu:pr} and also \cite{AGKM}.

Likewise, it is tempting to conjecture that a variant of Franks
theorem holds for Reeb flows on $S^{2n-1\geq 5}$: a flow with more
than $n$ simple closed Reeb orbits must have infinitely many such
orbits. Theorem \ref{thm:main1}, the more recent result
  \cite[Thm.\ B]{CGG:Reeb-HZ} and also \cite[Thm.\ 1.7]{Gu:pr} are
the only results known to us supporting this conjecture when
$2n-1\geq 5$.

\subsection{About the proofs}
The proofs of the two main theorems are quite similar and hinge on
three key results. These are the Crossing Energy Theorem (Theorem
\ref{thm:CE}), the Floer Homology Vanishing Theorem (Theorem
\ref{thm:vanishing}) and the Index Recurrence Theorem (Theorem
\ref{thm:IRT}).

In the Hamiltonian setting, the Crossing Energy Theorem asserts that
whenever a 1-periodic orbit $z$ of a Hamiltonian diffeomorphism
$\varphi_H$ is locally maximal (e.g., hyperbolic), every Floer
cylinder $u$ for $\varphi_H^k$ asymptotic to the iterates $z^k$ at
either end has energy $E(u)$ bounded from below by a constant
$c_\infty>0$ independent of $k$; see \cite{CGG:Entropy, GG:hyperbolic,
  GG:PR}. For our purposes independence of $k$ is crucial;
furthermore, for a fixed $k$ the lower bound readily follows from a
suitable variant of Gromov compactness.

A simple proof of the Crossing Energy Theorem in this case is based on
the fact that every loop $t\mapsto u(s,t)$, $t\in S^1_k=\R/k\Z$, is an
$\eps$-pseudo-orbit of the Hamiltonian flow $\varphi_H^t$, i.e., it
deviates from the flow by no more than $\eps$ in time-one, where
$\eps$ is small when $e=E(u)$ is small. In fact, we can take
$\eps=O\big(e^{1/4}\big)$ uniformly in $k$; see \cite[Sec.\ 1.5]{Sa}
or Remark \ref{rmk:salamon}. Then, arguing by contradiction, we assume
that $e\to 0$ for some sequences $k=k_i\to\infty$ and $u=u_{k_i}$. Let
$V$ be a compact isolating neighborhood of $z$. For each $u$, pick
$s\in\R$ such that $u(s,\cdot)$ is tangent to $\p V$ and contained in
$V$. (Strictly speaking, here we have to work with $V\times \R/\Z$
unless $H$ is autonomous.)  Passing to the limit as $k\to\infty$ and
hence $\eps\to 0$, we obtain an integral curve of $\varphi_H^t$
entirely contained in $V$ and different from $z$, which is impossible
since $z$ is locally maximal.

Generalizing this argument to the Reeb and symplectic homology setting
presents several difficulties. First of all, it is not entirely
clear how to state the Crossing Energy Theorem on the level of Reeb
flows and/or symplectic homology. This forces us to work with
admissible Hamiltonians $H$ on the symplectic completion $\WW$ of a
Liouville domain $W$, which are constant on $W$. But then the
1-periodic orbit $\tz$ of $H$ corresponding to a locally maximal
closed orbit $z$ of the Reeb flow on $\p W$ is no longer maximal and,
in addition, a Floer cylinder $u$ for $kH$ asymptotic $\tz^k$ can
hypothetically get arbitrarily close to $W$ where the Hamiltonian
vector field $X_H$ is close to zero. As a consequence, the above
argument breaks down. We show however that, under certain extra
conditions, this does not happen: $u$ remains some distance from $W$;
see Theorem \ref{thm:location}.  This is sufficient to prove a variant
of the Crossing Energy Theorem suitable for our purposes.

We should mention that recently a few other variants of the Crossing
Energy Theorem have been established: for $\CP^n$ by employing
generating functions in \cite{Al1}; for geodesic flows via
finite-dimensional approximations in \cite{GGM}; and finally in
\cite{CGP} for certain holomorphic curves in the symplectization by
using the machinery of feral holomorphic curves developed in
\cite{FH}. Let us, however, emphasize that none of the other variants
of the Crossing Energy Theorem is currently applicable in the setting
of our main theorems, and hence Theorem \ref{thm:CE} is indispensable
for this work. This result is likewise crucial to the more
  recent works \cite{CGGM:Entropy, Fer2}.

The second key ingredient of the proof concerns with the vanishing of
the (non-equivariant) symplectic homology $\SH(W)$. To be more
precise, denote by $\SH^I(\alpha)$ the filtered symplectic homology of
the contact form $\alpha$ on $\p W$.  Then, whenever $\SH(W)=0$, there
exists a constant $C\geq 0$ such that the natural map
$\SH^I(\alpha)\to \SH^{I+C}(\alpha)$ is zero.  In particular, when
$I=\R=I+C$, we have $\SH(\alpha)^I=\SH(W)=\SH^{I+C}(\alpha)$, the map
in question is the identity and we get back the assumption that
$\SH(W)=0$. In other words, the condition that $\SH(W)=0$ implies that
the every bar of the persistence module $\SH^{(-\infty, a)}(\alpha)$
has length at most $C$. (This observation is originally due to Kei
Irie; we refer the reader to \cite{GS} or Section
\ref{sec:def+general} for a proof.)  This is the case, for instance,
when $W$ is a star-shaped domain in $\R^{2n}$ with smooth boundary,
i.e., $\alpha$ is a contact form on the standard contact sphere
$S^{2n-1}$.

In general, the statement is no longer literally true if we replace
$\SH^I(\alpha)$ by the filtered Floer homology $\HF^I(H)$ for an
admissible Hamiltonian $H$ on $\WW$. For instance, $\HF(H)\neq 0$ in
general. However, we show in Theorem \ref{thm:vanishing} that an
analogue of this vanishing result holds for the family of the filtered
Floer homology groups $\HF^I(kH)$, $k\in \N$, with $C$ independent of
$k$, as long as the right end-point of $I$ is within a certain range
which grows linearly with $k$.

The final key ingredient of the proof is the Index Recurrence Theorem
(Theorem \ref{thm:IRT}). This is a symplectic linear algebra or number
theory result roughly asserting that for a finite collection
$\Phi_i\in \TSp(2m)$, the sequences of Conley--Zehnder indices of the
iterates $\phi_i^k$ have a certain recurrence property.  As stated and
used here, the theorem was proved in \cite{GG:LS}, but it can also be
derived from the common index jump theorem from \cite{Lo,
  LZ}. The two theorems are closely related and make a central
component of the proofs of many multiplicity results. For instance,
combined with dynamical convexity, the Index Recurrence Theorem allows
one in the non-degenerate case to construct infinitely many index
intervals of length $2m$ such that each sequence
$\mu\big(\Phi_i^k\big)$ enters each interval at most once.

Theorems \ref{thm:main1} and \ref{thm:main2} are proved by
contradiction. Assuming that the flow has only finitely many simple
periodic orbits we use the Index Recurrence and Crossing Energy
Theorems to find arbitrarily long action intervals $I$ such that for a
suitable Hamiltonian $H$ and all large $k\in \N$, a locally maximal
closed Reeb orbit $z$ gives rise to a non-zero class in $\HF^I(kH)$
with action at the center of the interval. Then the map
$\HF^I(kH)\to \HF^{I+C}(kH)$ is non-zero.  This is impossible by
Theorem \ref{thm:vanishing}. Theorem \ref{thm:main1} requires much
weaker dynamical convexity conditions and no non-degeneracy for other
orbits because $\mu(z^k)=k\mu(z)$ for a hyperbolic orbit $z$. This
enables us to use the Index Recurrence Theorem in a more precise way
tying the index sequences of other closed orbits to $\mu(z^k)$.

\begin{Remark}
  \label{rmk:number}
  In connection with the discussion in Section \ref{sec:results} let
  us point out an interesting discrepancy between Theorem
  \ref{thm:main2} and the higher-dimensional Le Calvez--Yoccoz
  theorem, \cite[Thm.\ 4.1]{GG:PR}, for Hamiltonian pseudo-rotations
  $\varphi$ of $\CP^n$. The former theorem only requires the Reeb flow
  to have finitely many simple closed orbits, while in the latter the
  number of periodic points must be exactly $n$ although the
  Hamiltonian diffeomorphism need not be non-degenerate. From this
  perspective, the conditions of Theorem \ref{thm:main2} are less
  restrictive. This discrepancy is reflected by the difference in the
  definitions of pseudo-rotations of $\CP^n$ and Reeb pseudo-rotations
  of $S^{2n-1}$ which we adopt here. Of course, the higher-dimensional
  Franks theorem from \cite{Sh:HZ} allows us to just require $\varphi$
  to have finitely many periodic orbits along with a minor
  non-degeneracy condition, but this is a substantial extra step using
  a machinery unavailable in the contact setting.
\end{Remark}

The paper is organized as follows. In Section \ref{sec:conv} we set
our conventions and notation. The relevant facts from Floer theory,
mostly quite standard, are assembled in Section \ref{sec:prelim}. In
Section \ref{sec:sympl-hom} we state and discuss in detail the three
key results used in the proofs of the main theorems. We prove the main
results of the paper in Section \ref{sec:main-pf} and the Crossing
Energy Theorem in Section \ref{sec:cross}. Finally, in the Appendix
(Section \ref{sec:LCY}) we recall an argument from \cite{Fr,FM} and
prove the local version of the Le Calvez--Yoccoz theorem.

\medskip\noindent\subsection*{Acknowledgments} The authors are
grateful to Bassam Fayad, Umberto Hry\-ni\-ewicz, Patrice Le Calvez
and Jean-Pierre Marco for useful discussions. Parts of this work were
carried out while the second and third authors were visiting the
IMJ-PRG, Paris, France, in May 2023 and also during the Summer 2023
events \emph{Symplectic Dynamics Workshop} at INdAM, Rome, Italy;
\emph{From Smooth to $C^0$ Symplectic Geometry: Topological Aspects
  and Dynamical Implications Conference} at CIRM, Luminy, France, and
\emph{Workshop on Conservative Dynamics and Symplectic Geometry} at
IMPA, Rio de Janeiro, Brazil. The authors would like to thank these
institutes for their warm hospitality and support.

\section{Conventions and notation}
\label{sec:conv}
In this section we set our conventions and notation, which are mainly
similar to the ones used in \cite{GG:LS}.

\subsection{Hamiltonians and the action functional}
\label{sec:setting}
Even though Theorems \ref{thm:main1} and \ref{thm:main2} concern with
Reeb flows on the sphere $S^{2n-1}$, $2n-1\geq 3$, it is convenient
for the sake of future references to work with more general Liouville
domains than star-shaped domains in $\R^{2n}$. Thus let $\alpha$ be
the contact form on the boundary $M=\p W$ of a Liouville domain
$W^{2n\geq 4}$. For the sake of simplicity we will assume that
$c_1(TW)\mid_{\pi_2}=0$. As usual denote by $\WW$ the symplectic
completion of $W$, i.e.,
$$
\WW=W\cup_M M\times [1,\,\infty)
$$
with the symplectic form $\omega$ extended to $M\times [1,\infty)$ as
$d(r\alpha)$, where $r$ is the coordinate on $[1,\,\infty)$. Sometimes
it is convenient to extend the function $r$ to a collar of $M=\p W$ in
$W$. Thus we can think of $\WW$ as the union of $W$ and
$M\times [1-\eps,\,\infty)$ for small $\eps>0$ with
$M\times [1-\eps,\, 1]$ lying in $W$ and the symplectic form given by
the same formula.

\begin{Example}
  In this paper we are mainly interested in contact forms $\alpha$ on
  the standard contact sphere $M=S^{2n-1}$. In this case, we can take
  a star-shaped domain $W\subset \WW=\R^{2n}$ as the Liouville domain.
\end{Example}

Unless specifically stated otherwise, most of Hamiltonians
$H\colon \WW\to \R$ considered in this paper depend only on $r$
outside $W$, i.e., $H=h(r)$ on $M\times [1,\,\infty)$, where the
function $h\colon [1,\,\infty)\to \R$ is required to meet the
following conditions:
\begin{itemize}
\item $H$ is constant on $W$ and $h$ is monotone increasing;
\item $h$ is convex, i.e., $h''\geq 0$, and $h''>0$ on $(1,\, \rmax)$
  for some $\rmax>1$ depending on~$h$;
\item $h(r)$ is linear, i.e., $h(r)=ar-c$, when $r\geq \rmax$.
\end{itemize}
In other words, the behavior of $h$ changes from a constant on $W$ to
convex in $r$ on $M\times [1,\, \rmax]$, and strictly convex on the
interior, to linear in $r$ on $M\times [\rmax,\, \infty)$.  When
needed, we will denote $\rmax$ by $\rmax(h)$ to indicate its
dependence on $h$.

In what follows, we will refer to $a$ as the \emph{slope} of $H$ (or
$h$) and write $a=\slope(H)$. The slope is always assumed to be
outside the action spectrum of $\alpha$, i.e.,
$a\not\in\CS(\alpha)$. We call $H$ \emph{admissible} when
$H|_W=\const<0$ and \emph{semi-admissible} when $H|_W\equiv 0$. (This
terminology differs somewhat from the standard usage and we emphasize
that \emph{admissible Hamiltonians are not semi-admissible}.) When $H$
satisfies only the last of these conditions, we call it \emph{linear
  at infinity}.

The difference between admissible and semi-admissible Hamiltonians is
just an additive constant: $H- H|_W$ is semi-admissible when $H$ is
admissible. Hence the two Hamiltonians have the same filtered Floer
homology up to an action shift. Our reason for introducing
semi-admissible Hamiltonians is that in the proofs of the main results
we need to work with the Floer homology of a fixed Hamiltonian and its
iterates rather than symplectic homology, and in this case iterated semi-admissible Hamiltonians are more convenient to handle.

The Hamiltonian vector field $X_H$ is determined by the condition
$$
\omega(X_H,\, \cdot)=-dH
$$
and, on $M\times [1,\,\infty)$, 
$$
X_H=h'(r) R_\alpha,
$$
where $R_\alpha$ is the Reeb vector field.  Hence every $T$-periodic
orbit $z$ of the Reeb flow with $T<a$ gives rise to a
1-periodic orbit $\tz=(z,r)$ of $H$, where
\begin{equation}
  \label{eq:level}
h'(r)=T.
\end{equation}
Clearly, $\tz$ lies in the shell $1<r<\rmax$.

The action functional $\CA_H$ is defined as
$$
\CA_H(x)=\int_x\hat{\alpha}-\int_{S^1} H(x(t))\, dt,
$$
where $x\colon S^1=\R/\Z\to \WW$ is a smooth loop in $\WW$ and
$\hat{\alpha}$ is the Liouville primitive $\alpha_W$ of $\omega$ on
$W$ and $\hat{\alpha}=r\alpha$ on $M\times [1,\,\infty)$. (This
  is the negative of the ``standard'' action functional.) More
explicitly, when $x\colon S^1\to M\times [1,\,\infty)$, we
have
$$
\CA_H(x)=\int_{S^1} r(x(t))\alpha\big(x'(t)\big)\, dt
- \int_{S^1} h\big(r(x(t))\big)\, dt.
$$
Thus when $x=\tz=(z,r)$ is a 1-periodic orbit of $H$, the
action can be expressed as a function of $r$ only:
$$
\CA_H(\tz)=A_H(r),
$$
where
\begin{equation}
  \label{eq:AH}
  A_H\colon [1,\,\infty)\to [0,\,\infty)\textrm{ is given by } A_H(r)=r
  h'(r)-h(r).
\end{equation}
Sometimes we will also denote this \emph{action function} by $A_h$.
This is a monotone increasing function, for
$$
A_H'(r)=h'(r)+ r h''(r)-h'(r)=rh''(r)\geq 0.
$$

It is easy to see that
\begin{equation}
  \label{eq:maxAH}
\max A_H=A_H(\rmax)=c\geq a.
\end{equation}
Here the first equality follows from the fact that $A_H$ is monotone
increasing and the second one from that $h$ is linear, i.e.,
$h(r)=ar-c$, on $[\rmax,\,\infty)$.  To prove the inequality, note
first that
$$
h(r_{\max})\leq \int_1^{r_{\max}} h'(r)\,dr\leq a(r_{\max}-1).
$$
Hence, since $h'(r_{\max})=a$, we have
\begin{equation*}
  \begin{split}
c &=a r_{\max}-h(r_{\max})\\
                       &\geq a r_{\max}-a(r_{\max}-1)\\
                       &= a.
  \end{split}
\end{equation*}
Thus, when $r\geq \rmax$, the function $A_H$ is constant, i.e.
\[A_H|_{[\rmax,\infty)}\equiv A_H(\rmax)=c.\] 
For this reason, in what follows we will
usually limit the domain of this function to $[1,\,\rmax]$.

While the function $A_H$ expresses the Hamiltonian action as a
function of $r$ we will also need another variant $\fa_H$ of an action
function, expressing the Hamiltonian action as a function of the
period $T$, i.e., the contact action. In other words, the function
$\fa_H$ translates the contact action to the Hamiltonian action. Thus
$$
\fa_H=A_H\circ (h')^{-1}\colon [0,\,a]\to [0,\,\max A_H=A_H(\rmax)]
$$
is more specifically defined by the condition
\begin{equation}
  \label{eq:fa}
\fa_H(T)=A_H(r),\textrm{ where } h'(r)=T.
\end{equation}
Since $H$ is (semi-)admissible, $h'$ is one-to-one on
$[1,\, \rmax]$, and the inverse $(h')^{-1}$ is defined on
$[0,\, a]$.

Then using the chain rule, we have
$$
\fa'_H(T)=r:=(h')^{-1}(T)\textrm{ and } 1\leq \fa'_H\leq \rmax.
$$
Thus $\fa_H$ a strictly monotone increasing convex $C^1$-function,
which is $C^\infty$ on $(0,\,a )$, with $\fa''_H=\infty$ at $T=0$ and
$T=a$. Furthermore,
\begin{equation}
  \label{eq:fa-ineq}
\fa_{H_1}\leq \fa_{H_0} \textrm{ on } [0,\,\slope(H_0)] \textrm{
    whenever } H_1\geq H_0.
\end{equation}
This inequality is not obvious, and can be proved as follows.
  Let $\ell_i(r)\subset\R^2$ be the tangent line to the graph of $h_i$
  at the point $(r,h_i(r))$. Let $r_i\in[1,\rmax]$ be such that
  $h'_i(r_i)=T$, so that $\fa_{H_i}(T)=A_{H_i}(r_i)$. Since the
  functions $h_i$ are convex, the slope of $\ell_i(r)$ is monotone
  increasing in $r$. The tangent line $\ell_1(r_1)$ passes through the
  point $(r_1,h_1(r_1))$, which is above the graph of $h_0$ (since
  $h_0\leq h_1$). Therefore, since $\ell_0(r_0)$ and $\ell_1(r_1)$
  have the same slope $T$, the tangent line $\ell_1(r_1)$ lies above
  $\ell_0(r_0)$. Because $-A_{H_i}(r)$ is the ordinate of the
    intersection of $\ell_i(r)$ with the vertical axis, we obtain
    $A_{H_0}(r_0)\geq A_{H_1}(r_1)$.

Concluding this section, we note that $\tz$ is well-defined
only as a 1-periodic orbit of the Hamiltonian flow $\varphi_H^t$ of
$H$. This orbit corresponds to the whole circle $\Gamma=\tx(S^1)$ of
fixed points of the time-one map $\varphi_H$ (aka 1-periodic points of
$\varphi_H$). These orbits, however, have the same action, mean index,
etc. In what follows we will ignore this terminological ambiguity.

\subsection{Conley--Zehnder index and dynamical convexity}
\label{sec:CZ}
We refer the reader to, e.g., \cite{SZ} for the definition and a
detailed discussion of the \emph{Conley--Zehnder index} and to, e.g.,
\cite[Sec.\ 4]{GG:LS} or \cite[Sec.\ 2]{GM} or \cite{Lo}. Here we
normalize the Conley--Zehnder index, denoted throughout the paper by
$\mu$, by requiring the flow for $t\in [0,\,1]$ of a small positive
definite quadratic Hamiltonian $Q$ on $\R^{2m}$ to have index
$m$. More generally, when $Q$ is small and non-degenerate, the flow
has index equal to $(\sgn Q)/2$, where $\sgn Q$ is the signature of
$Q$.  In other words, the Conley--Zehnder index of a non-degenerate
critical point of a $C^2$-small autonomous Hamiltonian $H$ on
$\R^{2m}$ is equal to $m-\MUM$, where $\MUM=\MUM(H)$ is the Morse
index of $H$.

We denote by $\mu_\pm\colon \TSp(2m)\to \Z$ the upper and lower
semi-continuous extensions of the Conley--Zehnder index. The mean
index of $\Phi\in \TSp(2m)$ is defined as
$$
\hmu(\Phi)=\lim_{k\to\infty}\frac{\mu_{\pm}\big(\Phi^k\big)}{k}.
$$
This is the unique, up to normalization, homogeneous continuous
quasi-morphism
$$
\hmu\colon \TSp(2m)\to \R;
$$
cf.\ \cite{BG}. It is a standard fact (see, e.g., \cite{SZ}) that
\begin{equation}
  \label{eq:CZ-mean}
\hmu(\Phi)-m\leq \mu_-(\Phi)\leq \mu_+(\Phi)\leq \hmu(\Phi)+m
\end{equation}
and that the first and the last inequalities are strict when $\Phi$ is
non-degenerate.

The assumption that $c_1(TW)\mid_{\pi_2}=0$ guarantees that these
invariants are also defined for a contractible periodic orbit $x$ of
the Reeb flow on $M$ or a Hamiltonian flow on $\WW$, which we denote
by $\mu(x)$, $\mu_\pm(x)$ and $\hmu(x)$ or $\mu(\tx)$, etc. We note
that since the Conley--Zehnder index of a closed Reeb orbit is defined
via a trivialization of the contact structure, dealing with Reeb flows
everywhere above we should set $m=n-1$ where $2n=\dim W$. On the other
hand, for Hamiltonian flows, $m=n$.

A feature of Reeb flows central to our results is that of dynamical
convexity.

\begin{Definition}
  \label{def:DC} 
  The Reeb flow on a $(2n-1)$-dimensional contact manifold is said to
  be \emph{dynamically convex} if for every closed contractible Reeb
  orbit $x$ (or equivalently when $\pi_1$ has no torsion, every simple
  closed contractible Reeb orbit) $\mu_-(x)\geq n+1$.
\end{Definition}

As is shown in \cite{HWZ}, the Reeb flow on a strictly convex
hypersurface in $\R^{2n}$ is dynamically convex. The converse is not
true; see \cite{CE1, CE2}. The geodesic flow of a Finsler metric on
$S^2$ with curvature meeting a certain pinching condition is
dynamically convex, \cite{HP}. While the notion of dynamical convexity
understood literally as in Definition \ref{def:DC} encapsulates an
important class of Reeb flows on the spheres, it is not entirely clear
if it is of serious relevance for other contact manifolds, especially
in higher dimensions. We are not aware of any examples of dynamically
convex geodesic flows on manifolds of dimension $n>2$.  However, some
geodesic flows are close to being dynamically convex. For instance,
all closed geodesics on the standard round sphere $S^{n\geq 3}$ have
index greater than on equal to $n-1$. (For geodesic flows, the Morse
index is equal to the Conley--Zehnder index $\mu_-$.) It is not
difficult to show that the same is true for the geodesic flow of a
Finsler metric which is $C^2$-close to the round metric on $S^n$.

We refer the reader to, e.g., \cite[Sec.\ 4.2]{GG:LS} for a very
detailed treatment of dynamical convexity and also to, e.g., \cite{AM,
  ALM, DL2W, GM, GGMa} for other relevant notions and results.

\section{Preliminaries}
\label{sec:prelim}
In this section we recall basic definitions and results from Floer
theory used in the proofs of Theorems \ref{thm:main1} and
\ref{thm:main2}. None of the results stated here are really new and
most of them are quite standard and can be traced in some form to the
original works, \cite{CFH, Vi}, or found in, e.g., \cite{BO0, BO,
  CO}. When necessary, we give more specific references.

\subsection{Floer equation}
\label{sec:Floer-eq}
Fix an almost complex structure $J$ on $\WW$ satisfying the following
conditions:
\begin{itemize}
\item $J$ is compatible with $\omega$, i.e., $\omega(\cdot,\, J\cdot)$
  is a Riemannian metric, and on the cone $M\times [1,\,\infty)$ we
  have
\item $J r\p /\p r=R_\alpha$ and
\item $J$ preserves $\ker(\alpha)$.  
\end{itemize}
Note that the last two conditions are equivalent to
\begin{equation}
  \label{eq:complex_strc}
dr\circ J=-r\alpha.
\end{equation}
We call such almost complex structures \emph{admissible}. If the
second and the third conditions are satisfied only outside a compact
set and in addition $J$ can be time-dependent and 1-periodic in time
within a compact set, we call $J$ \emph{admissible at infinity}.

Let now $H$ be a Hamiltonian linear at infinity and let $J$ be an
admissible at infinity almost complex structure.  For our purposes it
is convenient to have the $L^2$-anti-gradient of $\CA_H$ adopted as
the Floer equation:
\begin{equation}
  \label{eq:floer_1}
\p_s u =-\nabla_{L^2}\CA_H(u),
\end{equation}
where $u\colon \R\times S^1\to \WW$ with coordinates $s$ on $\R$ and
$t$ on $S^1$. Thus the function $s\mapsto \CA_H\big(u(s,\,\cdot)\big)$
is decreasing.  More explicitly this equation reads
\begin{equation}
\label{eq:floer_2}
\p_s u-J\big(\p_t u- X_H(u)\big)=0.
\end{equation}
Note that in contrast with the standard conventions the leading term
of this equation is not the $\bar{\p}$-operator but the $\p$-operator.
(This is mainly because our action functional is the negative of the standard one.) In other words, when $H\equiv 0$,
solutions of \eqref{eq:floer_2} are anti-holomorphic curves rather
than holomorphic curves. Nonetheless the standard properties of the
solutions of the Floer equation readily translate to our setting,
e.g., via the change of variables $s\mapsto -s$. We will often refer
to solutions of the Floer equation as \emph{Floer cylinders}.

Recall also that the energy of $u$ is by definition
$$
E(u)=\int_{S^1\times\R}\|\p_s u\|^2\,dt\,ds.
$$
When $u$ is asymptotic to $\tx=(x,r^+)$ at $-\infty$ and $\ty=(y,r^-)$
at $+\infty$, we have
\begin{equation}
  \label{eq:Energy-Action}
E(u)=\CA_H(\tx)-\CA_H(\ty)=A_H(r^+)-A_H(r^-),
\end{equation}
where $r^+\geq r^-$ since \eqref{eq:floer_1} is an anti-gradient Floer
equation; see Section \ref{sec:CE}.

We will make extensive use of two properties of Floer cylinders $u$
for admissible or semi-admissible Hamiltonians $H$ and admissible
almost complex structures $J$.

The first one is the standard maximum principle asserting that the
function $r\circ u$ cannot attain a local maximum in the domain mapped
by $u$ into the cone $M\times [1,\,\infty)$ where $r$ is defined. (We
refer the reader to, e.g., \cite{Vi} and also \cite[Sec.\ 2]{FS} for a
direct proof of this fact.) Of course, the same is true for
Hamiltonians linear at infinity and $J$ admissible at infinity in the
domain where the Hamiltonian is a linear function of $r$ and $J$ is
admissible. Moreover, the maximum principle also holds for
continuation Floer trajectories when $h(r)=a(s)r-c(s)$ as long as the
slope $a$ is a non-decreasing function of $s$; there are however no
constraints on the function $c(s)$.  The maximum principle is crucial
to having Floer cylinders and continuation solutions of the Floer
equation contained in a compact region of $\WW$, and thus the Floer
homology and continuation maps for homotopies with non-decreasing
slope defined.

In particular, let $u$ be a solution of the Floer equation asymptotic
to 1-periodic orbits $(x^+,r^+)$ at $-\infty$ and $(x^-,r^-)$ at
$+\infty$. Note that $r^+>r^-$ -- hence the notation -- since with our
conventions $A_H$ is an increasing function and the action is
decreasing along $u$. Then the maximum principle implies that
\begin{equation}
\label{eq:max_prince}
\sup_{\R \times S^1} r\big(u(s,t) \big) \leq r^+
= r\big(u(-\infty,t)\big).
\end{equation}

The second fact we need, which we will refer to as
\emph{Bourgeois--Oancea monotonicity}, is much less standard and goes
back to \cite[p.\ 654]{BO}; see also \cite[Lemma 2.3]{CO}. It asserts
that
\begin{equation}
\label{eq:monotone}
\max_{t \in S^1}  r\big(u(s,t) \big) \geq r^- = r\big(u(\infty,t)\big)
\end{equation}
for all $s\in \R$. For the sake of completeness, we prove
\eqref{eq:monotone} in Section \ref{sec:BO}, closely following the
argument in \cite{CO}.  Note that as a consequence of the maximum
principle and \eqref{eq:monotone}, the left-hand side of
\eqref{eq:monotone} is a monotone decreasing function of $s$ ranging
from $r^+$ at $-\infty$ to $r^-$ at $\infty$.

\subsection{Filtered Floer and symplectic homology}
\label{sec:Floer}

\subsubsection{Floer homology and continuation maps}
\label{sec:Floer-cont}
Let $H$ be a Hamiltonian $H$ linear at infinity such that, as usual,
$\slope(H)\not\in\CS(\alpha)$ and let $I\subset \R$ be an action
interval. Then, regardless of whether $H$ is non-degenerate or not,
the filtered (contractible) Floer homology $\HF^I(H)$ over a fixed
ground field $\F$ is defined as long as the end-points of $I$ are
outside the action spectrum $\CS(H)$ of $H$.  Throughout the paper we
will always assume the latter condition to be met by $I$. For the sake
of brevity, we will write
$$
\HF^\tau(H):=\HF^{(-\infty,\, \tau]}(H).
$$
To define $\HF^I(H)$ it suffices to replace $H$ by a small compactly
supported non-degenerate perturbation.  With our conventions the Floer
homology is graded by the Conley--Zehnder index. Thus a non-degenerate
minimum of $H$ with small Hessian gives rise to a generator of degree
$n$ and a non-degenerate closed Reeb orbit $x$ with Conley--Zehnder
index $m$ gives rise to two generators $\cx$ and $\hx$ with indices
$m$ and $m+1$, respectively. (The Floer complex is discussed in more
detail in Section \ref{sec:Fl-complex}.) We will usually suppress the
grading in the notation.

Let $H_s$, $s\in\R$, be a homotopy between two linear at infinity
Hamiltonians $H_0$ and $H_1$, i.e., $H_s$ is a family of linear at
infinity Hamiltonians such that $H_s=H_0$ when $s$ is close to
$-\infty$ and $H_s=H_1$ when $s$ is close to $+\infty$. (In what
follows we will take the liberty to have homotopies parametrized by
$[0,\,1]$ rather than $\R$.) There are two situations where a homotopy
gives rise to a map in Floer homology.

The first one is when all Hamiltonians $H_s$ have the same slope. Then
the homotopy induces a continuation map
$$
\HF^I(H_0)\to \HF^{I+C}(H_1)
$$
shifting the action filtration by
$$
C=\int_{-\infty}^\infty \max_{z\in\WW}\max\{0,-\p_s H_s (z)\}\,ds.
$$
As a consequence, $\HF(H)=\HF^\tau(H)$ for $\tau>\sup \CS(H)$;
see Lemma \ref{lemma:mon-hom}. Moreover, it is well-known and not hard
to show that $\HF(H_s)$ does not change as long as
$\slope(H_s)$ stays outside $\CS(\alpha)$.

The second one is when $H_s$ is monotone increasing, i.e., the
function $s\mapsto H_s(z)$ is monotone increasing for all $z\in
\WW$. In this case, the function $s\mapsto \slope(H_s)$ is also
monotone increasing. Furthermore, while $\slope(H_0)$ and
$\slope(H_1)$ are still required to be outside $\CS(\alpha)$, the
intermediate slopes $\slope(H_s)$ can pass through the points of
$\CS(\alpha)$. A monotone increasing homotopy induces a map
$$
\HF^I(H_0)\to \HF^{I}(H_1)
$$
preserving the action filtration.

In both cases the fact that the continuation Floer trajectories are
confined to a compact set is a consequence of the maximum principle;
see Section \ref{sec:Floer-eq}.

The Floer homology is insensitive to small perturbations of the
Hamiltonian and the action interval. To be more precise, fix a linear
at infinity Hamiltonian $H$ and an interval $I$. (Here as usual we
require that $\slope(H)\not\in \CS(\alpha)$ and the end-points of $I$
are not in $\CS(H)$.) Assume that the slope of $H'$ is sufficiently
close to the slope of $H$, $H'$ is $C^2$-close to $H$ on the
complement of the domain where they are both linear functions of $r$,
and that the end-points of $I'$ are close to the ones of $I$. Then there is a
natural isomorphism of the Floer homology groups
$$
\HF^I(H)\cong \HF^{I'}(H').
$$

Floer homology carries the pair-of-pants product
$$
\HF_{m_0}^{\tau_0}(H_0)\otimes \HF_{m_1}^{\tau_1}(H_1)\to
\HF_{m_0+m_1-n}^{\tau_0+\tau_1}(H_0+H_1),
$$
where as always we have assumed that the slopes of $H_0$ and $H_1$ and
their sum are not in $\CS(\alpha)$; see, e.g., \cite{AS}.

With our conventions, when $F$ is a semi-admissible Hamiltonian and
$\delta>0$ is small, there is a natural isomorphism
\begin{equation}
  \label{eq:HFlow1}
\HF_*^\delta(F)\cong \H_{n+*}(W; \p W).
\end{equation}
Furthermore, under this isomorphism, the fundamental class
\begin{equation}
  \label{eq:HFlow2}
[W, \p W]\in \H_{2n}(W,\p W)\cong \F\cong \HF_n^\delta(F)
\end{equation}
corresponds to a unit for the pair-of-pants product. To be more
precise, fix a linear at infinity Hamiltonian $H$ and
$\tau\not\in \CS(H)$. For the sake of simplicity, assume in addition
that for a semi-admissible Hamiltonian $F$ as above, $\slope(F)$ and
also $\delta>0$ are sufficiently small. Then the composition of maps
\begin{equation}
  \label{eq:unit}
\HF^\tau(H)=\HF^\tau(H)\otimes
\HF_n^\delta(F)\stackrel{\cong}{\longrightarrow}
\HF^{\tau+\delta}(H+F)\cong \HF^\tau(H)
\end{equation}
with the middle arrow given by the pair-of-pants product is the
identity map.

\subsubsection{Invariance}
\label{sec:invariance}
The results of this section are somewhat less standard although the
methods are. So far, throughout the above discussion, it was
sufficient to have Hamiltonians to only be linear at infinity. (The
Hamiltonian $F$ in \eqref{eq:HFlow1} and \eqref{eq:unit} is an
exception.) In what follows, however, it becomes essential to require
Hamiltonians to be semi-admissible.

Namely, let $H_0\leq H_1$ be two such Hamiltonians.  For the sake of
simplicity, we will assume furthermore that $\rmax(h_1)=\rmax(h_0)$
and use $\rmax$ to denote both of these parameters. (This condition
can be replaced by that $\rmax(h_1)\leq \rmax(h_0)$ with suitable
wording modifications.)

Let
$$
f=\fa_{H_1}\circ \fa_{H_0}^{-1}\colon [0,\, A_{H_0}(\rmax)]\to [0,\,
A_{H_1}(\rmax)].
$$
The function $f$ is monotone as a composition of two monotone
increasing functions. Furthermore, as is easy to see,
\begin{equation}
  \label{eq:f-sectra}
  f\big(\CS(H_0)\big)=f\big([0,\, A_{H_0}(\rmax)]\big)\cap\CS(H_1),
\end{equation}
i.e., $f$ gives rise to a one-to-one correspondence between the action
spectra as long as the target is in the range of $f$.

\begin{Proposition}
  \label{prop:f}
  We have $f(\tau)\leq \tau$ for all $\tau$.  Furthermore, there are
  isomorphisms of Floer homology groups
  \begin{equation}
    \label{eq:isom-f}
    \HF^\tau(H_0)\stackrel{\cong}{\longrightarrow}
    \HF^{f(\tau)}(H_1),\qquad \forall
    \tau\in[0, A_{H_0}(\rmax))\setminus\CS(H_0).
\end{equation}
These isomorphisms are natural in the sense that they commute with
maps induced by inclusion of action intervals, monotone homotopies,
etc. In particular, whenever $\beta$ is in the range of $f$,
the composition
$$
\HF^\beta(H_0)\to \HF^{\beta}(H_1)\to
\HF^{f^{-1}(\beta)}(H_0),
$$
where the first arrow comes from a monotone increasing homotopy and
the second is the inverse of \eqref{eq:isom-f}, is induced by the
inclusion of action intervals: $\beta\leq f^{-1}(\beta)$.
\end{Proposition}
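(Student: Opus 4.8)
The first assertion will be immediate from \eqref{eq:fa-ineq}: writing $T:=\fa_{H_0}^{-1}(\tau)\in[0,\slope(H_0)]$ we get $f(\tau)=\fa_{H_1}(T)\le\fa_{H_0}(T)=\tau$ because $H_1\ge H_0$. I would also record at this point that, by \eqref{eq:f-sectra}, the hypothesis $\tau\notin\CS(H_0)$ forces $f(\tau)\notin\CS(H_1)$, so that $\HF^{f(\tau)}(H_1)$ is indeed defined.

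For the isomorphism \eqref{eq:isom-f} the plan is to take it to be (essentially) the continuation map of a monotone increasing homotopy $H_s$ from $H_0$ to $H_1$, with the action bookkeeping supplied by \eqref{eq:f-sectra}. Set $T_0:=\fa_{H_0}^{-1}(\tau)$; the hypothesis $\tau<A_{H_0}(\rmax)$ gives $T_0<\slope(H_0)\le\slope(H_1)$ and $T_0\notin\CS(\alpha)$. First I would observe that, after a small compactly supported nondegenerate perturbation, the generators of $\CF^\tau(H_0)$ are the pairs $\cx,\hx$ attached to the closed Reeb orbits $x$ of period $T_x<T_0$ together with the $\H_*(W,\p W)$-generators near action $0$, and that, since $\CA_{H_i}(\tx)=\fa_{H_i}(T_x)$ with $\fa_{H_i}$ strictly increasing and $f(\tau)=\fa_{H_1}(T_0)$, the complex $\CF^{f(\tau)}(H_1)$ has exactly the same generating set. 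Next I would use the maximum principle \eqref{eq:max_prince} for monotone increasing homotopies (and, as an auxiliary tool, an extension of Bourgeois--Oancea monotonicity \eqref{eq:monotone}) to confine the relevant Floer and continuation cylinders to the shell $\{r\le (h_1')^{-1}(T_0)\}$, and then an energy estimate for such homotopies --- together with a suitable choice of $H_s$ --- to conclude that at the chain level the continuation map is triangular with the identity on the diagonal with respect to the action filtration on this common generating set; its restriction $\CF^\tau(H_0)\to\CF^{f(\tau)}(H_1)$ is then a filtered chain isomorphism, hence induces \eqref{eq:isom-f}. (Alternatively, one may identify both sides with the filtered symplectic homology $\SH^{(-\infty,T_0)}(\alpha)$ and take the resulting composite identification; this should give the same map.)

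Finally, the naturality of these isomorphisms will follow from functoriality of continuation maps and the usual homotopy-of-homotopies argument. For the displayed example, the composite $\HF^\beta(H_0)\to\HF^\beta(H_1)\to\HF^{f^{-1}(\beta)}(H_0)$ is the continuation map at level $\beta$ for $H_0\to H_1$ followed by the inverse of \eqref{eq:isom-f} at $\tau=f^{-1}(\beta)$ (which carries $\HF^{f^{-1}(\beta)}(H_0)$ isomorphically onto $\HF^{f(f^{-1}(\beta))}(H_1)=\HF^\beta(H_1)$); concatenating the homotopy $H_0\to H_1$ with one realizing that inverse isomorphism gives a homotopy from $H_0$ to itself which, up to contractibility of the space of such homotopies, induces the inclusion $\HF^\beta(H_0)\to\HF^{f^{-1}(\beta)}(H_0)$, using $\beta\le f^{-1}(\beta)$ from the first part. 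The step I expect to be the main obstacle is the triangularity-and-isomorphism claim in the second paragraph: the inclusion $\HF^{f(\tau)}(H_1)\hookrightarrow\HF^\tau(H_1)$ is in general \emph{not} an isomorphism --- there may be closed Reeb orbits of period in $(T_0,\fa_{H_1}^{-1}(\tau)]$ contributing to $H_1$ in the window $(f(\tau),\tau]$ --- so one genuinely has to control which $H_1$-orbits the continuation cylinders out of the low-action $H_0$-orbits can reach, and it is precisely the shape of $f=\fa_{H_1}\circ\fa_{H_0}^{-1}$ together with \eqref{eq:f-sectra} that should make this control possible.
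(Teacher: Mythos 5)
Your first paragraph is fine and matches the paper (monotonicity of $\fa$ via \eqref{eq:fa-ineq}, plus \eqref{eq:f-sectra} to keep $f(\tau)$ off the spectrum). The problem is the central step, which you yourself flag as "the main obstacle" and then leave unresolved: the claim that the continuation map of a direct monotone increasing homotopy from $H_0$ to $H_1$ is confined to the shell $\{r\le (h_1')^{-1}(T_0)\}$ and is, at chain level, triangular with the identity on the diagonal, hence restricts to a filtered isomorphism $\CF^\tau(H_0)\to\CF^{f(\tau)}(H_1)$. The tools you invoke do not give this. The maximum principle \eqref{eq:max_prince} only bounds $r\circ u$ by the larger of the two asymptotic $r$-levels, and for a continuation cylinder out of a low-action orbit of $H_0$ the level of the output orbit of $H_1$ is precisely what is unknown: an orbit $\ty$ of $H_1$ with underlying Reeb period $T_y\in(T_0,\fa_{H_1}^{-1}(\tau)]$ has action in $(f(\tau),\tau]$, sits at a level above $(h_1')^{-1}(T_0)$, and is not excluded as an output either by the action estimate for monotone homotopies or by Bourgeois--Oancea monotonicity \eqref{eq:monotone} (which is a lower bound on $\max_t r(u(s,t))$ and, if anything, is consistent with the cylinder climbing to a higher output level). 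So the confinement is circular as stated, and "same generating set" by itself does not make the filtered map an isomorphism. Also, your parenthetical fallback --- identifying both sides with filtered symplectic homology --- is not available here: in this paper that identification (the Corollary on $\HF^I(F_\tau)$) is itself deduced from this very proposition.

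The paper gets around exactly this difficulty by splitting the deformation in two. First it inserts an intermediate semi-admissible Hamiltonian $H_{01}$ with $h_{01}=h_0$ on $[1,\,r_0]$ (where $A_{H_0}(r_0)=\tau$), $h_0\le h_{01}\le h_1$ and $\slope(h_{01})=\slope(h_1)$. For the homotopy $H_0\to H_{01}$ the confinement argument is legitimate, because \emph{both} asymptotic orbits of action $\le\tau$ --- for $H_0$ and for $H_{01}$ --- are already known to lie in $\{r\le r_0\}$ (monotonicity of the action functions, which coincide there), so the maximum principle does pin the cylinders to the region where the two Hamiltonians agree, giving $\HF^\tau(H_0)\cong\HF^\tau(H_{01})$. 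The remaining deformation from $H_{01}$ to $H_1$ is between Hamiltonians of the \emph{same slope}, and there no geometric confinement is needed at all: Lemma \ref{lemma:mon-hom} (the standard inverse-homotopy invariance argument, with a continuously moving action window $I_s=(-\infty,\,f_s(\tau)]$ whose endpoint stays off $\CS(F_s)$ by \eqref{eq:f-sectra}) produces the isomorphism onto $\HF^{f(\tau)}(H_1)$, and naturality falls out of the construction. If you want to salvage your one-step chain-level approach, you would have to actually prove that continuation cylinders cannot raise the underlying Reeb period (equivalently, that the output action is at most $f$ of the input action), and none of the estimates you list yields that; the two-step route is the way the paper sidesteps having to prove it.
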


Of course, a similar result holds for admissible Hamiltonians, but
then we also have to take into account the shift of actions. For the
sake of completeness we include a proof of the proposition.

\begin{proof} The fact that $f(\tau)\leq \tau$ follows from that
  $\fa_{H_1}\leq \fa_{H_0}$ when $H_1\geq H_0$; see
  \eqref{eq:fa-ineq}. We construct the isomorphism \eqref{eq:isom-f}
  in two steps.

  Let $r_0\in [1,\, \rmax)$ be uniquely determined by the condition
  that $A_{H_0}(r_0)=\tau$. Pick an intermediate semi-admissible
  Hamiltonian $H_{01}=h_{01}(r)$ with
  $\rmax(h_{01})=\rmax(h_0)=\rmax(h_1)$ and the following properties
  \begin{itemize}
  \item $h_0\leq h_{01}\leq h_1$;
  \item $h_{01}=h_0$ on $[1,\, r_0]$;
  \item $\slope(h_{01})=\slope(h_1)$.
  \end{itemize}
  We note that $\tau\not\in \CS(H_{01})$.
  
  Then the monotone increasing linear homotopy from $H_0$ to $H_{01}$
  induces an isomorphism
\begin{equation}
  \label{eq:isom-f1}
\HF^\tau(H_0)\stackrel{\cong}{\longrightarrow} \HF^\tau(H_{01}).
\end{equation}
This is a consequence of the fact that by the maximum principle (see
Section \ref{sec:Floer-eq}) all Floer cylinders for $H_0$ and $H_{01}$
and Floer continuation trajectories starting at 1-periodic orbits with
action less than or equal $\tau$ lie on the region $r\leq r_0$ or, to
be more precise, $W\cup \big(M\times [1,\, r_0]\big)$; cf.\ Section
\ref{sec:Fl-complex}.

The second step is based on the following standard lemma which
essentially goes back to \cite{CFH,Vi}.

\begin{Lemma}
  \label{lemma:mon-hom}
  Let $F_s$ be a homotopy of Hamiltonians linear at infinity such that
  all Hamiltonians $F_s$ have the same slope, and let $I_s$ be a
  family of intervals continuously depending on $s$ such that for all
  $s$ the end-points of $I_s$ are outside $\CS(F_s)$. Then there is a
  natural isomorphism
  \begin{equation}
    \label{eq:isom-mon-hom}
    \HF^{I_0}(F_0)\stackrel{\cong}{\longrightarrow} \HF^{I_1}(F_1).
  \end{equation}
\end{Lemma}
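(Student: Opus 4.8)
The plan is to prove this by the classical two-sided continuation argument, after first reducing to half-infinite windows. For a finite window $I_s=[a_s,b_s]$ the group $\HF^{I_s}(F_s)$ sits in the natural long exact sequence relating it to $\HF^{(-\infty,a_s]}(F_s)$ and $\HF^{(-\infty,b_s]}(F_s)$, and every continuation map is compatible with that sequence; so by the five lemma it suffices to produce the isomorphism for $I_s=(-\infty,\tau_s]$ with $\tau_s\notin\CS(F_s)$ for all $s\in[0,1]$.

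For half-infinite windows I would use that, since all $F_s$ share the common slope, which lies outside $\CS(\alpha)$, the maximum principle applies to every continuation trajectory, so an arbitrary homotopy from $F_{\sigma'}$ to $F_{\sigma''}$ (not necessarily monotone) induces, for every admissible level $\nu$, a continuation map $\HF^{\nu}(F_{\sigma'})\to\HF^{\nu+C}(F_{\sigma''})$ with $C=\int\max_{z}\max\{0,-\p_sF_s(z)\}\,ds\ge0$; these maps are compatible with inclusions of windows, functorial under concatenation of homotopies, and a homotopy from $F$ to itself which is homotopic rel endpoints to the constant one induces, into a large enough window, the map induced by inclusion of action intervals. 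Now subdivide $[0,1]$ as $0=\sigma_0<\dots<\sigma_N=1$. On each piece the forward homotopy gives a map $\Psi_j$ from the $F_{\sigma_j}$-side to the $F_{\sigma_{j+1}}$-side with shift $C_j$, and the reversed homotopy gives $\Phi_j$ in the opposite direction with shift $C_j'$; the composites $\Phi_j\circ\Psi_j$ and $\Psi_j\circ\Phi_j$ are then the maps induced by inclusion of action intervals that enlarge the relevant window by $C_j+C_j'$. Provided no point of $\CS(F_{\sigma_j})$ or $\CS(F_{\sigma_{j+1}})$ falls in the length-$(C_j+C_j')$ slivers just above the relevant endpoints, these composites are isomorphisms, hence each $\Psi_j$ is an isomorphism; composing the $\Psi_j$ and using the perturbation invariance of the previous subsection to realign the shifted windows with $\HF^{\tau_{\sigma_j}}(F_{\sigma_j})$ gives the desired isomorphism $\HF^{\tau_0}(F_0)\to\HF^{\tau_1}(F_1)$.

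What makes the subdivision work is a uniform spectral gap. Since the common slope is a fixed constant outside $\CS(\alpha)$ and the $F_s$ run over a compact family, all relevant actions stay in one fixed compact set and $\CS(F_s)$ varies upper semicontinuously in $s$; together with the hypothesis $\tau_s\notin\CS(F_s)$ for every $s$ and compactness of $[0,1]$ this yields $\operatorname{dist}\big(\tau_s,\CS(F_s)\big)\ge\eps_0$ for some $\eps_0>0$ independent of $s$. On the other hand $C_j,C_j'\le(\sigma_{j+1}-\sigma_j)\cdot\sup_{s,z}|\p_sF_s(z)|$, which tends to $0$ uniformly as the mesh of the partition shrinks, so once $C_j+C_j'<\eps_0$ for all $j$ the no-spectrum-in-the-sliver condition holds automatically. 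Naturality of the final isomorphism with respect to inclusions of intervals and monotone homotopies is then inherited from the functoriality of continuation maps under concatenation and the fact that the isomorphism is a composite of such maps.

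The step I expect to be the main obstacle is precisely this bookkeeping of action shifts: one must ensure that none of the finitely many window enlargements used in the two-sided comparison ever sweeps across a point of the moving action spectrum, and that the ``homotopy-of-homotopies yields the inclusion-induced map'' principle is invoked in its correct filtered form. The uniform lower bound $\eps_0$ on the spectral gap, combined with the uniform smallness of the shifts $C_j+C_j'$ for a fine partition, is the technical heart of the argument; everything else is standard Floer-theoretic formalism going back to \cite{CFH,Vi}.
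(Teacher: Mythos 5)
Your argument is correct and is essentially the paper's own proof written out in detail: the paper likewise breaks the homotopy into pieces in which either the interval or the Hamiltonian is fixed, uses that constancy of the slope makes the reversed homotopy admissible (so forward-then-backward continuation is an inclusion-of-windows map, hence an isomorphism when no spectrum is crossed), and relies on the endpoints staying off the moving action spectrum. The uniform spectral-gap and fine-mesh bookkeeping you supply, as well as the five-lemma reduction to half-infinite windows, are details the paper leaves implicit rather than a different route.
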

The lemma is proved by breaking down the homotopy into a concatenation
of homotopies such that for each of them either the interval or the
Hamiltonian is independent of $s$. For a fixed Hamiltonian and varying
interval, the assertion follows directly from the definition. For a
fixed interval and varying Hamiltonian, it is a consequence of the
fact that the ``inverse'' homotopy induces the inverse map. This is
the point where the condition that $\slope(F_s)=\const$, and hence a
homotopy need not necessarily be monotone increasing, enters the
picture.

In the setting of the proposition, let $F_s$ be a linear monotone
increasing homotopy from $F_0=H_{01}$ to $F_1=H_1$. Denote by
$f_s=\fa_{F_s}\circ \fa_{F_0}^{-1}$ the resulting family of maps and
note that $f_s(\tau)\not\in \CS(F_s)$ by \eqref{eq:f-sectra} since
$\tau\not\in\CS(F_0)$. Applying the lemma to the family of intervals
$I_s=(-\infty, \, f_s(\tau)]$, we obtain an isomorphism
\begin{equation}
  \label{eq:isom-f2}
  \HF^\tau(H_{01})\stackrel{\cong}{\longrightarrow}
  \HF^{f_1(\tau)}(H_{1})= \HF^{f(\tau)}(H_{1}).
\end{equation}
In the last identity we have used the fact that $f=f_1$ on
$[0,\,\tau]$, i.e., in self-explanatory notation
$f_{H_0H_1}=f_{H_{01}H_1}$ on this interval, since $h_{01}=h_0$ on
$[1,\, r_0]$.

The desired isomorphism \eqref{eq:isom-f} is now defined as the
composition of the isomorphisms \eqref{eq:isom-f1} and
\eqref{eq:isom-f2}.  The last assertion of the proposition follows
immediately from this construction.
\end{proof}

\begin{Remark}
  The condition that $\slope(F_s)=\const$ in Lemma \ref{lemma:mon-hom}
  is essential. For instance, let $F_s$ be a monotone increasing
  homotopy from $F_0$ to $F_1$. Let $I_s=(-\infty,\, \tau]$ where
  $\tau> \sup_{s\in \R} \max \CS( F_s)$. Then
  $\HF^I(F_0)=\HF(F_0)$ and $\HF^I(F_1)=\HF(F_1)$ are not in general
  isomorphic when $\slope(F_s)$ crosses $\CS(\alpha)$ as one can see
  already from the example of the round sphere (i.e., the unit
    sphere in $\R^{2n}$ equipped with the restriction of the standard
    Liouville form). However, one can somewhat relax this condition
  by requiring that $\slope(F_s)$ stays outside $\CS(\alpha)$.
\end{Remark}

\subsubsection{Local Floer homology}
\label{sec:localFl}
When $x$ is an isolated $T$-periodic orbit of $\alpha$ with
$T<\slope(H)$ the 1-periodic orbit $\tx$ of $H$ is also isolated as a
1-periodic orbit of the flow of $H$. As a consequence, we obtain an
isolated circle $\Gamma=\tx(S^1)$ of fixed points of $\varphi_H$. We
denote by $\HF(\tx)$ the \emph{local Floer homology} of $\Gamma$; see
\cite{Fl1,Fl2} and also \cite{Fen, GG10, McL}.  By definition, the
support $\supp\HF(\tx)$ is the range of degrees for which this
homology is non-trivial. Clearly, by \eqref{eq:CZ-mean} with $m=n-1$,
\begin{equation}
  \label{eq:support}
  \supp\HF(\tx)\subset [\mu_-(x),\,\mu_+(x)+1]
  \subset [\hmu(x)-n+1,\,\hmu(x)+n].
\end{equation}
For instance, when $x$ is non-degenerate the support of $\HF(\tx)$
comprises exactly two points: $\mu(x)$ and $\mu(x)+1$. More generally,
$\HF_*(\tx)=\HF_*(\psi)\oplus \HF_{*-1}(\psi)$, where $\psi$ is the
germ of the Poincar\'e return map of $x$; see \cite{Fen}. However, we
will not use this fact.

\subsubsection{Symplectic homology}
\label{sec:SH}
Let $I$ be an interval, and as always assume that the end-points of
$I$ are not in $\CS(\alpha)$. The \emph{symplectic homology}
$\SH^I(\alpha)$ is defined as
\begin{equation}
  \label{eq:SH}
\SH^I(\alpha):=\varinjlim_H \HF^I(H),
\end{equation}
where the limit is over all Hamiltonians linear at infinity and such
that $H|_W<0$. Since admissible (but not semi-admissible) Hamiltonians
form a co-final family in the class of linear at
    infinity Hamiltonians on $\WW$, we can limit $H$ to this
class. When working with this definition, it is useful to keep in mind
that
$$
\CS(H)\to\{ 0\}\cup\CS(\alpha)
$$
uniformly on compactly intervals. Clearly, $\SH^I(\alpha)$ is a
$\Z$-graded vector space over $\F$ with the grading coming from the
Conley--Zehnder index. For the sake of brevity we will write
$\SH^\tau(\alpha)$ when $I=(-\infty,\, \tau]$.

\begin{Remark}
  \label{rmk:adm}
  In \eqref{eq:SH}, we could have required that $H|_W\leq 0$ rather
  than that $H|_W<0$, or equivalently required $H$ to be
  semi-admissible or admissible. This would result in the same groups
  $\SH^I(\alpha)$, but also keep us from having convenient choices of
  co-final sequences. For instance, let $H$ be a semi-admissible
  Hamiltonian. Pick two sequences of positive numbers:
  $\lambda_i\to\infty$ and $\eps_i\to 0$. Then the sequence
  $H_i=\lambda_i H-\eps_i$ is co-final in the class of admissible
  Hamiltonians. However, neither this sequence nor the sequence
  $\lambda_i H$ is co-final when semi-admissible Hamiltonians is added
  to the class.
\end{Remark}

From a somewhat different perspective, $\SH^\tau(\alpha)$ can also be
defined as follows; cf.\ \cite{Vi}. For $\tau\not\in\CS(\alpha)$,
consider the function $F_\tau\colon \WW\to [0,\,\infty)$ given by
$$
F_\tau(z)=
\begin{cases}
  0 & z\in W,\\
  \tau\big(r(z)-1\big) & z\in M\times [1,\, \infty).
\end{cases}
$$
While this function is only continuous, its Floer homology is
obviously defined (by continuity). For instance, we can set
$$
\HF^I(F_\tau):=\varinjlim_{H\leq F_\tau}\HF^I(H)
$$
where the limit is taken over admissible or semi-admissible
Hamiltonians bounded from above by $F_\tau$. It is not hard to see
that
$$
\CS(H)\to \{ 0\}\cup\big(\CS(\alpha)\cap[0,\, \tau]\big),
$$
and hence it suffices to require that the end-points of $I$ are not in
$\{ 0\}\cup\big(\CS(\alpha)\cap[0,\, \tau]\big)$. 

\begin{Corollary}
  For $I\subset \R$, we have
  $$
  \HF^I(F_\tau)=\SH^{(-\infty,\,\tau]\cap I}(\alpha).
  $$
  In particular, $\HF(F_\tau)=\SH^{\tau}(\alpha).$
\end{Corollary}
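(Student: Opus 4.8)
The plan is to prove the Corollary as a direct consequence of the two definitions of filtered symplectic homology introduced just above: the one via the direct limit over Hamiltonians linear at infinity with $H|_W<0$ (equivalently semi-admissible or admissible, by Remark \ref{rmk:adm}), and the one via $\HF^I(F_\tau):=\varinjlim_{H\leq F_\tau}\HF^I(H)$ over (semi-)admissible Hamiltonians bounded above by $F_\tau$. The key point is that the two directed systems, while not the same, are \emph{cofinal in one another after truncating the action window to $(-\infty,\tau]$}, and cofinal maps of directed systems induce isomorphisms on colimits.

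First I would spell out the two sides. By definition $\SH^{(-\infty,\tau]\cap I}(\alpha)=\varinjlim_H \HF^{(-\infty,\tau]\cap I}(H)$ over all admissible $H$, while $\HF^I(F_\tau)=\varinjlim_{H\leq F_\tau}\HF^I(H)$. For an admissible Hamiltonian $H\leq F_\tau$ one has $\slope(H)\leq\tau$, so by the discussion of $A_H$ in Section \ref{sec:setting} (in particular $\max A_H=A_H(\rmax)=c\leq\tau$ is \emph{not} automatic — rather $A_H(r)$ is bounded by a quantity comparable to $\slope(H)\le\tau$ up to the $C^0$-size of $H$ near $\partial W$, which is taken small in a cofinal subfamily) every $1$-periodic orbit of $H$ has action in $[0,\tau']$ for some $\tau'$ that we may take arbitrarily close to $\tau$ along the system. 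Hence for such $H$ the truncation is harmless: $\HF^I(H)=\HF^{(-\infty,\tau]\cap I}(H)$ once the action window already sits below $\tau$. This identifies the system computing $\HF^I(F_\tau)$ with a subsystem of the one computing $\SH^{(-\infty,\tau]\cap I}(\alpha)$.

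Next I would check cofinality in the other direction. Given an arbitrary admissible $H$ (of any slope $a$, possibly $>\tau$), I want an admissible $H'\leq F_\tau$ together with a monotone homotopy realizing the comparison map, such that on the action window $(-\infty,\tau]\cap I$ the induced map $\HF^{(-\infty,\tau]\cap I}(H)\to\HF^{(-\infty,\tau]\cap I}(H')$ is an isomorphism. Concretely take $H'$ to agree with $H$ on the region $r\le r_0$, where $r_0$ is chosen so that $A_H(r_0)=\tau$ (possible since $A_H$ is monotone increasing, $\max A_H \ge a$, and we may assume $a>\tau$), and then bend $H'$ down to slope $<\tau$ beyond $r_0$, keeping $H'\le F_\tau$. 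All Floer cylinders and continuation trajectories between $1$-periodic orbits of action $\le\tau$ stay in $W\cup(M\times[1,r_0])$ by the maximum principle \eqref{eq:max_prince} together with Bourgeois--Oancea monotonicity \eqref{eq:monotone} exactly as in the proof of Proposition \ref{prop:f}; hence on the window $(-\infty,\tau]$ the two filtered complexes and their continuation map agree and the map is an isomorphism. This shows the $H'\le F_\tau$ system is cofinal in the truncated $\SH$ system, so the colimits agree: $\HF^I(F_\tau)=\SH^{(-\infty,\tau]\cap I}(\alpha)$. Taking $I=\R$ gives $\HF(F_\tau)=\SH^\tau(\alpha)$.

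The main obstacle, I expect, is the bookkeeping around the endpoint $\tau$ itself: one must be careful that $\tau\notin\CS(\alpha)$ so that all the action windows $(-\infty,\tau]$ have endpoints outside the relevant action spectra, and that in the cofinal subfamily the small $C^0$-perturbations near $\partial W$ do not push actions of orbits across $\tau$ — this is why one works with $H\le F_\tau$ (which forces $\slope\le\tau$) rather than with $\slope(H)=\tau$ exactly. Everything else is a formal manipulation of directed colimits once the maximum principle and Bourgeois--Oancea monotonicity pin down the support of the Floer trajectories, so no genuinely new analysis is needed beyond what is already recorded in Sections \ref{sec:Floer-eq}--\ref{sec:SH} and in the proof of Proposition \ref{prop:f}.
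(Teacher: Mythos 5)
Your geometric ingredients are the right ones (the cut-off level $r_0$ with $A_H(r_0)=\tau$, confinement of trajectories by the maximum principle \eqref{eq:max_prince}, and a cofinal family under $F_\tau$ whose action spectra sit below $\tau$), and this is the route the paper intends, since it derives the corollary from Proposition \ref{prop:f}. But as written there is a genuine gap in the passage from isomorphisms of individual groups to equality of colimits. First, the comparison map you invoke does not exist: for $\slope(H)>\tau>\slope(H')$ one cannot have $H\le H'$, so there is no monotone increasing homotopy from $H$ to $H'$, and a homotopy whose slope decreases induces no map at all (the maximum principle is lost); what actually makes your pair $(H,H')$ work is that the two action-truncated complexes literally coincide, all generators and Floer cylinders of action $\le\tau$ being confined to $\{r\le r_0\}$ where $H=H'$. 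Second, and more importantly, the subfamily $\{H\le F_\tau\}$ is \emph{not} cofinal in the family of all admissible Hamiltonians (no $H'\le F_\tau$ dominates an $H$ of slope $>\tau$), so the principle ``cofinal subsystems have the same colimit'' does not apply; and having an isomorphism $\HF^{(-\infty,\tau]\cap I}(H)\cong\HF^{(-\infty,\tau]\cap I}(H')$ for each matched pair is not by itself enough. One must check that these identifications intertwine the continuation maps of the two directed systems (and the maps into the nice subfamily computing $\HF^I(F_\tau)$) before concluding that the colimits agree. This compatibility is exactly the naturality clause of Proposition \ref{prop:f}, and it is the step that your ``formal manipulation of directed colimits'' leaves unproved.

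Two smaller points. Your unverified claim $H'\le F_\tau$ is in fact automatic: by convexity and $h(1)\le 0$ one has $h(r_0)\le (r_0-1)h'(r_0)$, hence $h'(r_0)\le\tau$ and $h\le\tau(r-1)$ on $[1,r_0]$. However, ``bending down'' past $r_0$ must be done by capping the slope at some $a'\in[h'(r_0),\tau)\setminus\CS(\alpha)$ while keeping $h''\ge0$ (a decreasing slope would destroy semi-admissibility and the maximum principle), and since $A_{H'}$ is then essentially constant equal to $\tau$ beyond $r_0$ one must make sure $\tau\notin\CS(H')$; for admissible rather than semi-admissible $H$ there is also the constant $H|_W<0$ to track. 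With these repairs, and with the naturality check above supplied (or with Proposition \ref{prop:f} invoked directly, as the paper does), your outline becomes the intended routine proof.
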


This corollary is essentially a consequence of Proposition
\ref{prop:f} with some wording modifications; its proof is routine and
for the sake of brevity we omit it.

A well-known, but crucial for our purposes, fact is that $\SH(\alpha)$
vanishes whenever $W$ is displaceable in $\WW$. For instance,
$\SH(\alpha)=0$ when $W$ is a ball in $\WW=\R^{2n}$. In the latter
case, vanishing of $\SH(\alpha)$ is established in \cite{Vi}. The
general case is proved in \cite{CFO} via Rabinowitz--Floer homology
and direct proofs are given in \cite{Su} and~\cite{GS}.

On the level of specific Hamiltonians this fact is reflected by the
following lemma which follows from the definition of symplectic
homology as a direct limit and Remark \ref{rmk:adm}.

\begin{Lemma}
  \label{lem:vanishing}
  Assume that $\SH(\alpha)=0$. Then for any semi-admissible
  Hamiltonian $H$ and any $I\subset \R$ there exist constants
  $\lambda\geq 1$ and $C\geq 0$ such that the natural
  inclusion/homotopy map
$$
\HF^I(H)\to \HF^{I+C}(\lambda H)
$$
is zero.
\end{Lemma}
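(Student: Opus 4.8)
The plan is to reduce the filtered statement to the vanishing of the \emph{unfiltered} continuation map $\HF(H)\to\HF(\lambda H)$ for $\lambda$ large, and then to enlarge the action window until it absorbs everything. First I would record the direct-limit computation of $\SH(\alpha)$: by Section~\ref{sec:SH} it may be taken over \emph{admissible} Hamiltonians, and by Remark~\ref{rmk:adm} the Hamiltonians $H_i:=\lambda_i H-\eps_i$, with $\lambda_i\to\infty$ and $\eps_i\to 0^+$ (chosen generically so that each slope $\lambda_i\,\slope(H)$ avoids $\CS(\alpha)$), form a co-final sequence. Since adding a constant to a Hamiltonian only shifts the action filtration and is invisible to unfiltered Floer homology, compatibly with continuation maps, and since $\lambda_iH\le\lambda_jH$ for $i\le j$ (as $H\ge 0$), this identifies $\SH(\alpha)$ with $\varinjlim_i\HF(\lambda_iH)$, the structure maps being the monotone continuation maps $\HF(\lambda_iH)\to\HF(\lambda_jH)$; taking $\lambda_1=1$ we have $\HF(H)=\HF(\lambda_1H)$. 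The one point that genuinely needs Remark~\ref{rmk:adm} -- and the only real subtlety of the proof -- is that the bare dilations $\lambda_iH$ are themselves \emph{not} co-final once semi-admissible Hamiltonians are admitted, which is precisely why the limit must be routed through the admissible family.

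Assuming now $\SH(\alpha)=0$, the finite-dimensional group $\HF(H)$ has each of its finitely many generators mapping to zero somewhere in the direct system, hence there is a single index $i_0$ for which $\HF(H)\to\HF(\lambda_{i_0}H)$ is the zero map; set $\lambda:=\lambda_{i_0}\ge 1$.

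It remains to pass to a filtered target. The $1$-periodic orbits of $\lambda H$ have period $<\slope(\lambda H)$ and action in $[0,\max A_{\lambda H}]$, so $\CS(\lambda H)$ is bounded above; choose $C\ge 0$, with the endpoints of $I+C$ off $\CS(\lambda H)$, large enough that $I+C$ lies entirely above $\CS(\lambda H)$ when $I$ is bounded below, and such that $\sup I+C>\supp\CS(\lambda H)$ when $\inf I=-\infty$. If $\inf I>-\infty$ then $\HF^{I+C}(\lambda H)=0$ and the map is trivially zero. If $I=\R$ the map in question \emph{is} $\HF(H)\to\HF(\lambda H)$, which vanishes with $C=0$. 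Finally, if $I=(-\infty,\tau]$, then the window-enlargement map $\HF^{\tau+C}(\lambda H)\to\HF(\lambda H)$ is an isomorphism, and naturality of continuation maps under enlarging the action window gives the commutative square
\[
\begin{array}{ccc}
\HF^{\tau}(H)&\longrightarrow&\HF^{\tau+C}(\lambda H)\\
\downarrow&&\downarrow\ \cong\\
\HF(H)&\longrightarrow&\HF(\lambda H)
\end{array}
\]
whose bottom row is zero, so the top row vanishes as well. This handles every $I\subset\R$, and once the direct-limit identification of the first paragraph is in place the rest is formal.
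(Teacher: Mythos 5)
Your argument is correct and is essentially the proof the paper has in mind: the lemma is stated there without a written proof, with the remark that it follows from the direct-limit definition of $\SH(\alpha)$ together with Remark \ref{rmk:adm}, which is exactly the co-final sequence $\lambda_i H-\eps_i$ you invoke. Your case analysis on the window $I$ and the isomorphism $\HF^{\tau+C}(\lambda H)\cong\HF(\lambda H)$ for large $C$ merely spell out the filtered bookkeeping the paper leaves implicit.
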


\subsection{Floer complexes and graphs}
\label{sec:Fl-complex}
The proofs of the main theorems require working on the level of Floer
complexes rather than Floer homology. The construction of the Floer
complex is quite standard and in this section we just briefly spell
out the necessary definitions.

\subsubsection{Non-degenerate case.}
\label{sec:nondeg}
Assume first that the Reeb flow of $\alpha$ is non-degenerate. Then a
(semi-)admissible Hamiltonian $H$ is Morse--Bott non-degenerate except
for the critical set $W$. To deal with this minor issue, we fix an
autonomous $C^2$-small perturbation $\tH$ of $H$ such that $\tH-H$ is
supported in a small neighborhood of $W$ and $\tH$ is Morse. Now $\tH$
is Morse--Bott non-degenerate and all constant 1-periodic orbits of
$\tH$ are non-degenerate.

Throughout the paper, we will consider Floer homology
  groups with coefficients in $\Z_2=\{0,1\}$.  We will define the
\emph{Floer complex} $\CF(H)$ by applying the standard Morse--Bott
complex construction using ``cascades'' to the non-constant 1-periodic
orbits of $H$; see, e.g., \cite{BH10, BH13, Bo, BO0, Fu, HN} and
references therein. (The setting of \cite{BO0} is the closest to ours.)  Let $x$ be a closed Reeb orbit of $\alpha$
with period $T<a$ and let $\tx(x,r)$ be the corresponding orbit of
$H$; see Section \ref{sec:setting}. All non-constant 1-periodic orbits
of $H$ have this form. Here we think of $\tx$ as a, not necessarily
simple, 1-periodic orbit of the flow of $H$. It gives rise to a family
of fixed points of $\varphi_H$ parametrized by the circle
$\Gamma_x=\tx(S^1)\subset \WW$ which we identify with
$x(S^1)\subset M$. Fix a Morse function $f_x$ on $\Gamma$ with exactly
one maximum and one minumum and a Riemannian metric which we suppress
in the notation. (Note that different orbits $x$ can give rise to the
same set $\Gamma$ and in this case we are allowed to take different
functions $f_x$, although this is not necessary for our purposes.)

Each orbit $\tx$ gives rise to two generators $\hx$ and $\cx$ of
$\CF(H)$ with grading $|\hx|=\mu(x)+1$ and $|\cx|=\mu(x)$,
corresponding to the maximum and the minimum of $f_x$. Each constant
orbit of $\tH$ gives rise to one generator. (In the proofs we are only
interested in the part of the complex where the actions and indices
are large. Clearly, the generators coming from $W$ do not contribute
to this part.)

The Floer differential $\pfl$ counts cascades: concatenations of
integral curves of $-\nabla f_x$ and Floer trajectories. For instance,
when $|\hx|=|\cy |+1$, a cascade from $\hx$ to $\cy$ comprises an
integral curve on $-\nabla f_x$ from $\hx$ to some point
$z\in \Gamma_x$, a solution of the Floer equation asymptotic at
$-\infty$ to a parametrization of $\tx$ with $\tx(0)=z$ and to some
parametrization of $\ty$ at $+\infty$ and finally an integral curve of
$-\nabla f_y$ from $\ty(0)$ to $\cy$.  The coefficient with which
$\cy$ enters $\pfl \hx$ is an algebraic number of such
(unparametrized) cascades. Cascades from $\hx$ to $\hy$ or from $\cx$
to $\cy$ or $\hy$ are defined in a similar fashion, as long as the
degree difference is 1, but now at least one of the integral curves is
constant. Note also that $\cx$ enters $\pfl \hx$ with zero
coefficients since the Morse function $f_x$ is perfect.

While the generators of the complex are essentially determined by $H$
(and its perturbation $\tH$), the differential depends in addition on
some auxiliary data: the functions $f_x$ and metrics on $\Gamma_x$ and
an almost complex structure $J$ admissible at infinity. To ensure
regularity, it suffices to take $J$ generic in this class, \cite{FHS}.
In particular, we can take $J$ arbitrarily $C^\infty$-close to a fixed
admissible almost complex structure. The Floer complex $\CF(H)$ is
still filtered by the action functional $\CA_H$. Hence, while the
generators $\hx$ and $\cx$ lie on the same action level, the
differential $\pfl$ is strictly action decreasing.

We find it convenient to think in terms of the \emph{Floer graph} of
$H$; see \cite{CGG:HZ}. The vertices of the graph are the generators
of the complex and two generators are connected by an arrow whenever
one of them enters the Floer differential of the other with non-zero
coefficient. The \emph{length of the arrow} is by definition the
action difference. Keeping in mind that the generators outside $W$
come in pairs, we will also say that $\tx$ (or $x$) is connected by an
arrow to $\ty$ (or to $y$) when $\hx$ or $\cx$ is connected to $\hy$
or $\cy$. The Floer graph carries the same information as the
Morse--Bott Floer complex of $H$. In particular, it also depends on
all the auxiliary structures and choices used in the construction of
the Floer Morse--Bott differential.

\subsubsection{Degenerate case}
\label{sec:degen}
Assume now that the Reeb flow of $\alpha$ is degenerate and let $H$ be
a (semi-)admissible Hamiltonian for $\alpha$. As always we require
that $\slope(H)\not\in\CS(\alpha)$. Then the Floer complex of $H$ is
defined by replacing it with a $C^\infty$-small perturbation
$\tH$. Here we specify two ways to do this.

In the first one, we simply take as $\tH$ a $C^\infty$-small
autonomous maximally non-degenerate perturbation of $H$ which has the
same or nearly the same slope as $H$. Then $\tH$ is Morse--Bott and we
can apply the construction from the previous section.

The second approach is based on perturbing the contact form rather
than directly $H$. To this end, fix a $C^\infty$-small non-degenerate
perturbation $\talpha= g\alpha$ of $\alpha$, where $g\colon M\to \R$
is $C^\infty$-close to 1.  Thus $\talpha$ is the contact form on the
hypersurface $\{r=g\}$ in $\WW$. (We may assume that $g\geq 1$ or
extend $r$ to a collar of $M$ in $W$.) As a result, while the
Liouville domain $W$ is slightly affected, we can keep $\WW$
unchanged. The $r$-coordinate is however also affected and $H$ is no
longer admissible for $\talpha$. We replace it with the new
Hamiltonian $\tH$ obtained by the change of variables $r\mapsto r/g$
on the cylindrical part of $\WW$ and also perturb it in a neighborhood of $W$ as in the non-degenerate case.  Now, the resulting Hamiltonian $\tH$ is Morse--Bott and admissible for $\talpha$. We fix the necessary auxiliary structures and apply the
construction from Section \ref{sec:nondeg} to $\tH$.  By definition,
the Floer complex of $H$ is the Floer complex of~$\tH$.

In both cases, the Hamiltonian $\tH$ is $C^\infty$-close to $H$ on
compact sets and in the first construction we can even have $\tH=H$ at
infinity.

Assume next that the closed Reeb orbits of $\alpha$ with period less
than $\slope(H)$ are isolated. Then so are the non-constant 1-periodic
orbits of $H$. As a consequence, all non-constant 1-periodic orbits of
$\tH$ arise from non-constant 1-periodic orbits $\tx$ of $H$ splitting
into Morse--Bott non-degenerate orbits. In other words, non-constant
1-periodic orbits of $\tH$ and the corresponding generators of
$\CF(\tH)$ come in clusters labeled by the orbits of $H$. Limiting the
Floer differential to a cluster we obtain a complex, and the homology
of this complex is the local Floer homology $\HF(\tx)$; see Section
\ref{sec:localFl}.

The \emph{reduced Floer graph} of $H$ is then defined as follows; cf.\
\cite[Sec.\ 5.2]{CGG:HZ}. The vertices are the 1-periodic orbits of
$H$ or equivalently the underlying closed Reeb orbits of
$\alpha$. Each vertex is also labeled by $\HF(\tx)$. In addition, the
graph has an extra vertex corresponding to $W$, which is labeled by
$$
\HF^\delta(H)=\HF^\delta(\tH)=\H(W;\p W)[-n]
$$
for a sufficiently small $\delta>0$; see \eqref{eq:HFlow1} and
\eqref{eq:HFlow2}.

The arrows of the reduced Floer graph are defined by using the action
filtration spectral sequence and then collapsing it into one complex
as in \cite[Sec.\ 2.1.3 and 2.5]{GG:LS}. Namely, the $E_2$-term of
this spectral sequence is
\begin{equation}
  \label{eq:E2}
\CC = \bigoplus_{x}\HF(\tx)\oplus \HF^\delta(H)
\end{equation}
Then \cite[Lemma 2.8]{GG:LS} gives a way to organize the higher level
differentials in the spectral sequence into one differential
$\p\colon E_2\to E_2$ so that the resulting homology is isomorphic to
$\HF(H)$ or, to be more precise, to the graded space associated with
the filtration of $\HF(H)$ by the images of the maps
$\HF^\tau(H)\to \HF(H)$. Denote by
$$
\p_{\tx\ty}\colon \HF(\tx)\to \HF(\ty)
$$
the $\HF(\ty)$-component of the restriction of $\p$ to $\HF(\tx)$.
The vertices $x$ and $y$ or equivalently $\tx$ and $\ty$ are connected
by an arrow if $\p_{\tx\ty}\neq 0$ and then the arrow is labeled by
this map. Thus the graph carries exactly the same information as the
complex $(\CC,\p)$ together with the decomposition \eqref{eq:E2}. The
length of an arrow is again the action difference. The arrows to or
from the vertex $W$ are defined in a similar fashion.

\begin{Example}
  \label{ex:no-arrows}
  Clearly, the range of degrees of the generators in the cluster
  corresponding to $x$ is contained in
$$
[\mu_-(x),\,\mu_+(x)+1]\subset [\hmu(x)-n+1,\,\hmu(x)+n].
$$
Hence, $x$ and $y$ are never connected by an arrow when
$\hmu(x)-\hmu(y)>2n$.

Likewise, assume that every Floer cylinder $u$ asymptotic to $\tx$ has
energy $E(u)>\sigma$; see Section \ref{sec:CE} for the precise
definition. Then all arrows to or from $x$ have length greater
than~$\sigma$. We will repeatedly use both of these facts in the
proofs of the main theorems.
\end{Example}  

\begin{Remark}
  Even when $\alpha$ is non-degenerate, the reduced Floer graph
  differs from the Floer graph in two ways. First, in the reduced
  Floer graph the vertices $\cx$ and $\hx$ are lumped together. (The
  same is true for the generators coming from the critical points of
  $\tH$ in $W$.) But equally importantly the reduced Floer graph may
  have fewer arrows. This can be the case already when the
  construction is applied to a Morse function, and hence the two
  graphs have exactly the same vertex set; cf.\ \cite[Rmk.\
  2.10]{GG:LS}. Furthermore, it is not clear to us to what extent the
  reduced Floer graph depends on the auxiliary data: the perturbation
  $\tH$, the Morse--Bott data, the almost complex structure, etc.
\end{Remark}

\section{Background results}
\label{sec:sympl-hom}
In this section we discuss three results central to the proofs of the
main theorems. These are the Crossing Energy Theorem for admissible
Hamiltonians (Theorem \ref{thm:CE}); Theorem \ref{thm:vanishing}
stating, generally speaking, that the barcode of a (semi-)admissible
Hamiltonian is a priori bounded; and finally the Index Recurrence
Theorem (Theorem \ref{thm:IRT}).

\subsection{Crossing energy theorem}
\label{sec:CE}
The first key ingredient of the proofs is the Crossing Energy Theorem
(Theorem \ref{thm:CE}). To state this result, we start by recalling
some terminology.

Let $z$ be a closed Reeb orbit of $\alpha$ with period $T$. We say
that $z$ is \emph{isolated} (as a periodic orbit) if for every $T'> T$
it is isolated among periodic orbits with period less than
$T'$. Clearly, all periodic orbits of $\alpha$ are isolated if and
only if for every $T'$ the number of periodic orbits with period less
than $T'$ is finite. A stronger requirement is that $z$ is
\emph{isolated as an invariant set} or \emph{locally maximal}, i.e.,
$z$ has an open neighborhood $V$ which contains no invariant sets other than the image $z(\R/T\Z)$.  We call $V$ an \emph{isolating
  neighborhood}. By shrinking $V$ if necessary, one can always a find a compact neighborhood with the same property, which we will refer to as a compact isolating neighborhood. These definitions extend verbatim to any flow.

For instance, a non-degenerate periodic orbit is isolated as a
periodic orbit but not necessarily as an invariant set. A hyperbolic
periodic orbit is isolated as an invariant set.

Let $H(x,r)=h(r)$ be a (semi-)admissible Hamiltonian with 
$a=\slope(H)>T$, and let $\tz=(z,r_*)$ be the corresponding 1-periodic orbit of the Hamiltonian flow of $H$, where 
$$
h'(r_*)=T
$$
by \eqref{eq:level}. In particular, $r_*< \rmax$; see Section
\ref{sec:setting}. Note that $\tz$ is isolated as a 1-periodic orbit
of $H$ if $z$ is isolated. Moreover, $\tz$ is Morse--Bott non-degenerate when $z$ is non-degenerate. However, $\tz$ is never isolated as an invariant
set, as it belongs to a cylinder foliated by periodic orbits. Then
$$
\tz^k=(z^k,r_*)
$$
is also a 1-periodic orbit of $kH$ or equivalently a $k$-periodic
orbit of $H$.

Let $u\colon \R \times S^1 \to\WW$ be a Floer cylinder of $H$. We say
that $u$ is \emph{asymptotic} to $\tz$ at $-\infty$ if there exists a
sequence $s_i\to -\infty$ such that $u(s_i,\cdot)\to \tz$ in the
$C^1$-sense, up to the choice of the initial condition on $\tz$ which
might depend on $s_i$. (To be more precise, for some
  sequences $s_i\to -\infty$ and $\theta_i\in S^1$, the maps
  $t\mapsto u(s_i,t+\theta_i)$ $C^1$ converge to the map $t\mapsto \tz(t)$.)
This is equivalent to $u(s, \cdot)\to \tz$ in the $C^\infty$-sense as
$s\to -\infty$ when $z$ is non-degenerate, \cite{Bo}. (Likewise, $u$
is asymptotic to $\tz$ at $+\infty$ when $s_i\to +\infty$, etc.)

In general, $u$ can be asymptotic to more than one orbit $\tz$ at the
same end when $z$ is not isolated. However,
$\CA_H(\tz)=\lim\CA_H\big(u(s_i,\cdot)\big)$, and hence $\CA_H(\tz)$
is independent of the choice of $\tz$. Furthermore,
\eqref{eq:Energy-Action} holds: $E(u)$ is the difference of actions of
the orbits which $u$ is asymptotic to at $\pm\infty$. It is a standard
fact that $u$ is asymptotic to some 1-periodic orbits of $H$ at
$\pm\infty$ if and only if $E(u)<\infty$; see \cite[Sec.\ 1.5]{Sa}.

Next, assume that $z$ is isolated as a closed Reeb orbit or,
equivalently, $\tz$ is isolated as a 1-periodic orbit of the flow of
$H$. Then, as is easy to see, $\tz$ is unique and $u(s,\cdot)\to \tz$
as $s\to-\infty$ in the $C^1$-sense, up to the choice of an initial
condition on $\tz$ which might depend on $s$. This is a consequence of
the fact that
$$
E\big(u|_{(-\infty,s_i]\times S^1}\big)\to 0\textrm{ as } s_i\to -\infty
$$
since $\CA_H\big(u(s,\cdot)\big)$ is a monotone function of $s$ and of
the argument in \cite[Sec.\ 1.5]{Sa}; see also Remark
\ref{rmk:salamon}.  Therefore, for every tubular neighborhood
$U\subset \WW$ of $\tz(S^1)$, there exists $s_0\in \R$ such that
$u\big((-\infty,\, s_0]\times S^1\big)\subset U$, and for $s\leq s_0$
the loop $u(s,\cdot)$ is homotopic to $\tz$ in $U$. For instance, when
$\tz$ is $k$-iterated, the free homotopy class of $u(s,\cdot)$ is the
$k$th multiple of a generator of $\pi_1(U) \cong \Z$.

\begin{Theorem}[Crossing Energy Theorem]
\label{thm:CE}
Assume that $z$ is a locally maximal $T$-periodic Reeb orbit
of $\alpha$.  Let $H(r,x) =h(r)$ be a semi-admissible or admissible
Hamiltonian with $\slope(H)>T$, meeting the additional requirement
that
\begin{equation}
  \label{eq:h'''1}
  h'''\geq 0 \textrm{ on } [1,\, r_*+\delta]
\end{equation}
for some $\delta>0$, where $h'(r_*)= T$. Fix an admissible
almost complex structure $J$. Then there exists $\sigma >0$ such that
$E(u) \geq \sigma$ for any $k \in \N$ and any Floer cylinder
$u \colon \R \times S^1 \to \WW$ of $kH$ asymptotic, at either end, to~$\tz^k = (z^k, r_*)$.
\end{Theorem}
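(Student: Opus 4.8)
The plan is to argue by contradiction and to reduce the statement to one that is formally identical to the Hamiltonian situation treated in \cite{GG:hyperbolic, GG:PR}, then to run the standard pseudo-orbit/compactness argument; the only genuinely new ingredient is a confinement statement, which I would record separately as Theorem~\ref{thm:location}, and it is the only place where the convexity hypothesis~\eqref{eq:h'''1} is used. Suppose then that the conclusion fails: there are $k_i\to\infty$ and non-constant Floer cylinders $u_i$ of $k_iH$, asymptotic to $\tz^{k_i}$ at one of the ends, with $E(u_i)\to 0$; say this is the $-\infty$ end, the other case being symmetric (formally, via $s\mapsto -s$). First I would unwind the iteration, setting $\check u_i(s,\theta):=u_i(s/k_i,\theta/k_i)$ for $(s,\theta)\in\R\times(\R/k_i\Z)$: then $\check u_i$ is a Floer cylinder of the \emph{fixed} Hamiltonian $H$, of the same energy $E(\check u_i)=E(u_i)\to 0$, asymptotic to $\tz$ at $-\infty$ (now wrapping $k_i$ times around a circle of length $k_i$). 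Thus it suffices to rule out such a sequence $\check u_i$, which is exactly the shape of the Hamiltonian argument.

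The confinement step is next. Let $r^{\pm}$ be the $r$-levels of the asymptotic orbits of $\check u_i$, so that $r^{+}=r_*$; by \eqref{eq:Energy-Action}, together with $A_H'=rh''>0$ near $r_*$, the level $r^{-}$ tends to $r_*$ as $E(\check u_i)\to 0$. The maximum principle \eqref{eq:max_prince} gives $r\circ\check u_i\le r^{+}=r_*$ everywhere, while Bourgeois--Oancea monotonicity \eqref{eq:monotone} gives $\max_{\theta}r(\check u_i(s,\theta))\ge r^{-}$ for every $s$. The decisive extra fact --- which I would isolate as Theorem~\ref{thm:location}, and for which the mild convexity \eqref{eq:h'''1} of $h$ on $[1,r_*+\delta]$ is used --- is that $r\circ\check u_i\ge 1+c$ for some $c>0$ \emph{independent of $k_i$}: the crossing cylinder cannot drift down towards $W$, where $X_H$ degenerates. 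Combining these three bounds, every $\check u_i$ lies in the fixed compact set $K:=\{\,1+c\le r\le\rmax\,\}\subset\WW\smallsetminus W$, on which $X_H=h'(r)R_\alpha$ is a nowhere-vanishing reparametrized Reeb field and the ambient geometry is uniformly controlled.

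On $K$ the monotonicity (mean value) inequality applies with a uniform constant, so $E(\check u_i)\to 0$ forces $\|\partial_s\check u_i\|_{C^0}\to 0$, hence $\|\partial_\theta\check u_i-X_H(\check u_i)\|_{C^0}\to 0$ by the Floer equation (compare \cite[Sec.\ 1.5]{Sa} and Remark~\ref{rmk:salamon}, where one even gets a bound of order $E(\check u_i)^{1/4}$). Using \eqref{eq:complex_strc} one then gets $\|\partial_\theta(r\circ\check u_i)\|_{C^0}\to 0$, and together with $r^{-}\le\max_\theta r(\check u_i(s,\cdot))\le r^{+}$ and $r^{\pm}\to r_*$ this yields $r\circ\check u_i\to r_*$ uniformly. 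Projecting along $\pi\colon M\times[1,\infty)\to M$, the loops $\check w_i(s,\cdot):=\pi(\check u_i(s,\cdot))$ become $\eps_i$-pseudo-orbits of the Reeb flow reparametrized by the constant $T$, with $\eps_i\to 0$. I would then fix a compact isolating neighborhood $N$ of the closed Reeb orbit $z(\R/T\Z)$, with $z(\R/T\Z)$ in its interior $V$, and note that, writing $z=z_0^{m}$ for a simple orbit $z_0$, the neighborhood $V$ is diffeomorphic to $S^1\times D^{2n-2}$ with $\pi_1(V)\cong\Z$ generated by $[z_0]$. Near $-\infty$, $\check w_i(s,\cdot)$ lies in $V$ and is $C^1$-close to $z$ traced $k_i$ times. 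Either $\check w_i$ stays in $V$ for all $s$ --- in which case homotopy invariance of the winding number would force the asymptotic orbit of $\check u_i$ at $+\infty$ to have the same winding, hence (by the relation between $h'$ and the Reeb speed) to sit at level $r_*$ and to coincide with $\tz$, so that $E(\check u_i)=A_H(r_*)-A_H(r_*)=0$, contradicting $\check u_i$ non-constant --- or $\check w_i$ leaves $V$, in which case there is $s_i$ with $\check w_i(s_i,\cdot)\subset\bar V$ and $\check w_i(s_i,\cdot)$ meeting $\partial V$. Translating $s$ so that $s_i=0$ and re-centering the circle $\R/k_i\Z$ at a point $\theta_i$ with $\check w_i(0,\theta_i)\in\partial V$ (so that it ``opens up'' to $\R$ as $i\to\infty$), Arzel\`a--Ascoli together with the uniform $C^1$-bounds on $K$ give, along a subsequence, a limit $(s,\theta)\mapsto\gamma(\theta)$ defined on $(-\infty,0]\times\R$ with $\gamma'=T\,R_\alpha(\gamma)$, $\gamma(\R)\subset\bar V$ and $\gamma(0)\in\partial V$. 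Then $\overline{\gamma(\R)}$ is a compact Reeb-invariant subset of $N$ that meets $\partial V$, hence is distinct from $z(\R/T\Z)$; this contradicts the local maximality of $z$ and completes the proof.

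I expect the one non-routine step to be the confinement result, Theorem~\ref{thm:location}. In the Hamiltonian-diffeomorphism setting the orbit corresponding to $z$ is itself locally maximal and no maximum principle is needed, whereas here $\tz$ lies on a whole $r$-family of periodic orbits of $\varphi_H$ and is never isolated as an invariant set, so a priori nothing prevents a low-energy crossing cylinder from running off towards $W$, where $X_H\to 0$ and the pseudo-orbit heuristic collapses. Excluding this --- and doing so with a bound uniform in $k$, which is what makes the whole scheme work --- is exactly what \eqref{eq:h'''1} is for; everything else is a fairly direct transcription of the arguments of \cite{GG:hyperbolic, GG:PR}.
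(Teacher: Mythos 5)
Your overall architecture does match the paper's: pass from $kH$ to $H^{\sharp k}$ on $\R\times S^1_k$, confine the cylinder in the $r$-direction using \eqref{eq:h'''1}, and then run a pseudo-orbit argument against an isolating neighborhood of $z$ (your Arzel\`a--Ascoli limit producing an invariant set in $\bar V$ is a legitimate variant of the paper's direct Gronwall-type estimate against \eqref{eq:V-flow}, and your homotopy-class argument excluding the case where the projection stays in $V$ is the same as the paper's). The genuine gap is that the confinement statement is precisely the new and hard part of the theorem, and you only assert it. The paper devotes Theorems \ref{thm:location} and \ref{prop:location} to it: the uniform-in-$k$ bound is obtained from a differential inequality for $s\mapsto\int_0^k r(u_s)\,dt$, the maximum principle \eqref{eq:max_prince}, Bourgeois--Oancea monotonicity \eqref{eq:monotone}, the monotonicity of $A_h(r)/r$ (this is exactly where $h'''\geq 0$ enters, via Claim 1), and the pointwise bound \eqref{eq:salamon}, combined through the measure $\mu(s,\rho)$ of time the slice spends below a given level. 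Saying ``I would record this separately as Theorem \ref{thm:location}'' leaves the proof of Theorem \ref{thm:CE} reduced to an unproven lemma that carries essentially all of the novelty; for a fixed $k$ the whole statement already follows from Gromov compactness, so everything hinges on this step.

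Moreover, the weaker form you assert, $r\circ\check u_i\geq 1+c$, is not sufficient for the rest of your argument as written. You deduce that $r\circ\check u_i\to r_*$ uniformly from $\|\partial_\theta(r\circ\check u_i)\|_{C^0}\to 0$ together with $r^-\leq\max_\theta r(\check u_i(s,\cdot))\leq r^+$ and $r^\pm\to r_*$; but the circles have length $k_i\to\infty$, and a pointwise bound of order $E(\check u_i)^{1/4}$ on $\partial_\theta(r\circ\check u_i)$ only controls the dip below the maximum by $O\big(k_iE(\check u_i)^{1/4}\big)$, which need not tend to zero (take $E\sim k_i^{-1}$). Hence neither the claim that the projected loops are pseudo-orbits of the Reeb flow ``reparametrized by the constant $T$'' nor the limit equation $\gamma'=T\,R_\alpha(\gamma)$ is justified from what you have. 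This is fixable -- either invoke the full two-sided confinement of Theorem \ref{thm:location}, so that the speed $h'\big(r(\check u_i)\big)$ lies in $[h'(r_*-\delta),h'(r_*+\delta)]$, or reparametrize time by $t'(\tau)=h'\big(r(u(s,t))\big)$ as the paper does, which only needs a positive lower bound on $h'\circ r\circ u$ -- but in both cases the required input is the uniform-in-$k$ location bound whose proof is missing from your proposal.
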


A few remarks are due regarding the statement of this theorem, which
we will prove in Section \ref{sec:cross}. First of all, the point that
$\sigma$ is independent of $k$ is crucial for our
purposes. Furthermore, without it, for a fixed $k$ the theorem would
follow immediately from a suitable variant of Gromov compactness and
it would be enough to assume that $z^k$ is isolated as a periodic
orbit.

Secondly, fix $k$ and $0<\sigma'<\sigma$, and let $V$ be a small
tubular neighborhood of $\tz^k(S^1)$. Consider a $C^\infty$-small
$k$-periodic in time, non-degenerate perturbation $\tH$ of $kH$ in the
class specified in Section \ref{sec:Fl-complex} and a $C^\infty$-small
generic $k$-periodic perturbation of $J$. The 1-periodic orbit $\tz^k$
of $kH$ splits into several non-degenerate periodic orbits of $\tH$
contained in $V$. It follows again from a suitable version of the
Gromov compactness theorem (see, e.g., \cite{Fi}) that every Floer
cylinder of $\tH$ asymptotic to any of these orbits at either end has
energy greater than $\sigma'$. Thus, replacing $\sigma$ by, say,
$\sigma'=\sigma/2$, we obtain the following result.

\begin{Corollary}
  \label{cor:CE}
  Let $z$, $H$ and $J$ be as in Theorem \ref{thm:CE}. Assume that all
  closed Reeb orbits of $\alpha$ are isolated. Then there exists
  $\sigma>0$ such that for all $k\in\N$ and for any choice of
  auxiliary data every arrow to or from $\tz^k$ in the Floer
  graph of $kH$ has length greater than $\sigma$.
\end{Corollary}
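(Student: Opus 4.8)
The plan is to deduce the corollary from the Crossing Energy Theorem by a fixed-$k$ Gromov compactness argument, essentially the one sketched in the paragraph preceding the statement. First I would apply Theorem~\ref{thm:CE} to obtain a constant $\sigma_0>0$, \emph{independent of $k$}, such that every Floer cylinder of $kH$ asymptotic to $\tz^k$ at either end has energy at least $\sigma_0$, and then set $\sigma:=\sigma_0/2$. It then remains to check that for each fixed $k$, and for each (sufficiently fine) choice of the auxiliary data used to build the Floer graph of $kH$, every arrow to or from $\tz^k$ has length exceeding $\sigma$.

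Fix $k$. The auxiliary data consist of a $C^\infty$-small Morse--Bott perturbation $\tH$ of $kH$, equal to $kH$ away from a neighborhood of $W$ and in particular near $\tz^k$; a generic almost complex structure $\tJ$, admissible at infinity and $C^\infty$-close to $J$; Morse functions on the critical circles; and, in the degenerate case, a small perturbation of the contact form as in Section~\ref{sec:degen}, under which $\tz^k$ splits into finitely many non-degenerate 1-periodic orbits contained in a prescribed tubular neighborhood $V$ of $\tz^k(S^1)$. The key intermediate claim is: if the perturbation is small enough, then every $\tJ$-Floer cylinder of $\tH$ asymptotic at either end to $\tz^k$, or to one of the orbits into which $\tz^k$ splits, has energy greater than $\sigma$. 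Granting the claim, any arrow to or from $\tz^k$ in the Floer graph is realized by a cascade whose total action drop equals the length of the arrow and equals the sum of the energies of its Floer cylinder pieces, since the gradient segments run inside the critical circles and therefore preserve $\CA_{kH}$. The Floer cylinder piece of that cascade abutting $\tz^k$ is asymptotic at one end to $\tz^k$, or to one of its split orbits, so its energy, and hence the length of the arrow, exceeds $\sigma$; in the degenerate case one uses here that the arrows of the reduced Floer graph, obtained by collapsing the action spectral sequence as in \cite[Sec.\ 2.1.3 and 2.5]{GG:LS}, are still bounded below in length by the energy of an honest Floer cylinder emanating from a split orbit of $\tz^k$.

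The claim is proved by contradiction. If it failed, there would be a sequence of data $(\tH_i,\tJ_i)\to(kH,J)$ and $\tJ_i$-Floer cylinders $u_i$ of $\tH_i$, each asymptotic at one end to $\tz^k$ or to a split orbit of it, with $E(u_i)\le\sigma$. The actions of these asymptotics tend to $\CA_{kH}(\tz^k)$, so the actions of the opposite asymptotics stay in a fixed compact neighborhood of $\CA_{kH}(\tz^k)$; since all closed Reeb orbits of $\alpha$ are isolated, only finitely many 1-periodic orbits of $kH$ have action in this range, and after passing to a subsequence all the $u_i$ have opposite asymptotics in the cluster of a single such orbit. Gromov--Floer compactness (see, e.g., \cite{Fi}) then yields a limiting broken Morse--Bott Floer trajectory of $(kH,J)$ whose total energy is at most $\lim_i E(u_i)\le\sigma<\sigma_0$, yet which contains a non-constant Floer cylinder of $kH$ asymptotic to $\tz^k$ at one end; by Theorem~\ref{thm:CE} that single piece already has energy at least $\sigma_0$, which is impossible.

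The step I expect to be the main obstacle is the bookkeeping of the Morse--Bott Gromov limit. One must make sure that the limit really retains a Floer cylinder piece asymptotic to $\tz^k$ and that this piece is non-constant: it cannot degenerate entirely onto gradient segments on $\Gamma_{z^k}$, since $\CA_{kH}$ strictly decreases along each $u_i$ while being constant on $\Gamma_{z^k}$. One must also confirm that the bounded-action estimate genuinely confines the opposite asymptotics to finitely many orbits, which is precisely where the hypothesis that \emph{all} closed Reeb orbits of $\alpha$ are isolated, and not merely that $z$ is locally maximal, is used. The remaining ingredients, namely the passage from cylinders to cascades, the action identities, and the parallel treatment of the ordinary and the reduced Floer graph, are routine.
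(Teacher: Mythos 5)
Your overall strategy is exactly the paper's: Theorem \ref{thm:CE} provides a $k$-independent lower bound $\sigma_0$ for Floer cylinders of $kH$ with the fixed admissible $J$, and the corollary is deduced, for each fixed $k$, by a target-local Gromov compactness argument (\cite{Fi}) transferring this bound, up to a factor, to the perturbed data $(\tH,\tJ)$ actually used to define the Floer graph; your cascade/action bookkeeping and your use of the hypothesis that all closed Reeb orbits are isolated also match the paper's (very brief) justification in the paragraph preceding the statement.

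There is, however, one step that fails as written. Your intermediate Claim is too strong: in the degenerate case $\tz^k$ splits under the perturbation into a cluster of nondegenerate orbits, and Floer cylinders of $\tH$ joining two orbits of this cluster --- precisely the trajectories computing the local Floer homology $\HF(\tz^k)$ of Section \ref{sec:localFl} --- have energy bounded by the action width of the cluster, which tends to $0$ with the perturbation. So it is not true that every cylinder asymptotic to a split orbit of $\tz^k$ has energy $>\sigma$, and your argument that the Gromov limit cannot degenerate (``$\CA_{kH}$ strictly decreases along each $u_i$'') does not exclude this: the action drops may tend to zero, the limit is then the constant configuration at $\tz^k$, and no contradiction arises. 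The repair is to restrict the Claim to cylinders whose other asymptotic lies outside the cluster of $\tz^k$. These are the only relevant ones: the reduced Floer graph has a single vertex per cluster, so any composition of differentials contributing to an arrow at $\tz^k$ must contain a step exiting the cluster, and that step is an honest cylinder of $\tH$ with energy at most the arrow length plus the cluster width (in the non-degenerate case there is no splitting at all, and the issue does not arise since $\cz^k$ does not enter $\pfl\hz^k$). For such exiting cylinders your compactness argument does work: the limit broken trajectory joins $\tz^k$ to a different $1$-periodic orbit of $kH$, hence contains a non-constant piece asymptotic to $\tz^k$, and Theorem \ref{thm:CE} yields the contradiction. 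With this restriction your proof coincides with the paper's.
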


\subsection{Vanishing of symplectic homology}
\label{sec:def+general}
The result we need here translates vanishing of symplectic homology to
a certain quantitative property of the filtered Floer homology of the
sequence $kH$. More specifically, it asserts that the homology
$\HF^I(kH)$ is uniformly unstable, i.e., there is a uniform upper
bound on the longest bar, over larger and larger range of action as
$k\to \infty$, whenever $\SH(\alpha)=0$.

\begin{Theorem}
  \label{thm:vanishing}
  Assume that $\SH(\alpha)=0$. Fix $b>0$ and let $H$ be a
  semi-admissible Hamiltonian with $a:=\slope(H)>b$. Then there exists
  a constant $\Cbar>0$ depending only on $H$ such that for any
  interval $I\subset (-\infty, \, kb]$ and every sufficiently large
  $k\in [1,\,\infty)$ the inclusion/quotient map
$$
\HF^{I}(kH)\to \HF^{I+\Cbar}(kH)
$$
is zero. In particular, every bar ending below $kb$ has length less
than $\Cbar$.
\end{Theorem}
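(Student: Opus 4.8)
The plan is to reduce the statement to a uniform bound on the barcode of the persistence module $\{\HF^{\sigma}(kH)\}_{\sigma}$, to establish such a bound for symplectic homology once and for all, and then to transport it to the family $\HF^{\bullet}(kH)$ by means of Proposition~\ref{prop:f}, keeping track of the reparametrization of the action. First I would record the purely formal reduction: it suffices to show that, for all sufficiently large $k$, every bar of $\{\HF^{\sigma}(kH)\}_{\sigma}$ whose birth point is $\le kb$ has length $\le\Cbar$. Indeed, for any $I=(a_1,a_2]\subset(-\infty,kb]$ the long exact sequence relating $\HF^{I}(kH)$, $\HF^{a_1}(kH)$ and $\HF^{a_2}(kH)$ shows that every class of $\HF^{I}(kH)$ is carried by a bar born at action $\le a_2\le kb$, hence of length $\le\Cbar$; a one-line computation on the Floer complex then shows that such a class is killed once the action window is shifted up by $\Cbar$, and the ``in particular'' clause comes from the same count.

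The $k$-independent input is Irie's observation: since $\SH(\alpha)=0$, every bar of the persistence module $\{\SH^{t}(\alpha)\}_{t}$ has length $\le C_0$ for a constant $C_0>0$ depending only on $\alpha$ (see \cite{GS}). Briefly, the unit $\unit\in\SH^{\delta}(\alpha)$ for small $\delta>0$ — the image of $[W,\p W]$ via \eqref{eq:HFlow2} — must die at some finite action level $C_0$ because $\SH(\alpha)=0$; since the pair-of-pants product $\SH^{\tau_0}(\alpha)\otimes\SH^{\tau_1}(\alpha)\to\SH^{\tau_0+\tau_1}(\alpha)$ is compatible with enlarging the windows and multiplication by $\unit$ realizes the natural inclusion maps, one gets that $\SH^{t}(\alpha)\to\SH^{t+C_0}(\alpha)$ is zero for every $t$, which is the stated bound.

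The heart of the argument is the transport. Put $c:=\max A_H=A_H(\rmax)$; since $h'(1)=0<h'(\rmax)=a$ and $h$ is strictly convex on $(1,\rmax)$ one has $c>a>b$. The Hamiltonian $kH$ is semi-admissible with $\slope(kH)=ka$, $\rmax(kh)=\rmax(h)$, $A_{kH}=kA_H$ and $\fa_{kH}(T)=k\fa_H(T/k)$. Comparing $kH$ with a cofinal sequence of steeper semi-admissible Hamiltonians $H_m\ge kH$ having $\rmax(h_m)=\rmax(h)$ and $\slope(H_m)\to\infty$, iterating the isomorphisms of Proposition~\ref{prop:f}, and letting $m\to\infty$ — where $\fa_{H_m}\to\mathrm{id}$ and $\varinjlim_m\HF^{s}(H_m)=\SH^{s}(\alpha)$ — one obtains, for $0<\sigma<kc$, a natural isomorphism of persistence modules $\{\HF^{\sigma}(kH)\}_{0<\sigma<kc}\cong\{\SH^{t}(\alpha)\}_{0<t<ka}$ in which the two parameters are related by the increasing $C^1$-homeomorphism $\sigma=\fa_{kH}(t)$. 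The point that must be checked carefully is that these isomorphisms commute with the maps induced by inclusion of action intervals — this is exactly the naturality clause of Proposition~\ref{prop:f} — so that it is the full barcode, and not merely the groups, that is being transported.

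It then remains to read off the quantitative statement. Since $\fa'_{kH}(T)=\fa'_H(T/k)\in[1,\rmax(h)]$ uniformly in $T$ and $k$, the change of variables $t\mapsto\fa_{kH}(t)$ expands the length of a bar by at most the factor $\rmax(h)$, so I set $\Cbar:=\rmax(h)\,C_0$, which depends only on $H$. A bar of $\{\HF^{\sigma}(kH)\}$ with birth point $\le kb$ corresponds to a bar of $\{\SH^{t}(\alpha)\}$ with birth point $\le\fa_{kH}^{-1}(kb)=k\rho$, where $\rho:=\fa_H^{-1}(b)<a$; by the previous paragraph this $\SH$-bar has length $\le C_0$ and hence dies at action $\le k\rho+C_0$, which is $<ka<kc$ as soon as $k>C_0/(a-\rho)$. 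For such $k$ the corresponding $\HF$-bar therefore lies entirely in the range $\sigma<kc$ where the identification is available, so it is a genuine finite bar of length $\le\Cbar$; this provides exactly the bound demanded in the first step, completing the proof. I expect the main obstacle to be the transport in the third paragraph: making the passage to the limit in Proposition~\ref{prop:f} precise as an isomorphism of persistence modules (so that barcodes really do transport), and ruling out bars of $\{\HF^{\sigma}(kH)\}$ that straddle the threshold $\sigma=kc$ below which the comparison with $\SH^{\bullet}(\alpha)$ is valid. The remaining ingredients are standard persistence-module bookkeeping (first and last steps) and the by-now-standard module-theoretic proof of Irie's observation.
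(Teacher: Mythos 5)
Your argument is correct in substance, but it takes a genuinely different route from the paper. The paper runs the vanishing argument directly at finite slope: it multiplies by the unit of $\HF^\delta_n(\eps H)$ to show that the monotone-homotopy map $\HF^{\beta}(kH)\to \HF^{\beta+C}\big((k+\lambda)H\big)$ vanishes (this is where $\SH(\alpha)=0$ enters, via Lemma \ref{lem:vanishing}), and then converts this into a statement about $kH$ alone using Proposition \ref{prop:f} together with the explicit estimate \eqref{eq:f}, $\tau\geq f(\tau)\geq \tau-\lambda h(\rmax)$, proved by differentiating the action reparametrization along the linear homotopy; the resulting constant is $\Cbar=C+\lambda h(\rmax)$. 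You instead push the whole problem to the limit object: you identify the persistence module $\{\HF^\sigma(kH)\}_{0<\sigma<kc}$ with $\{\SH^t(\alpha)\}_{0<t<ka}$ up to the reparametrization $\sigma=\fa_{kH}(t)$, import Irie's bound $C_0$ on $\SH$-bars (the paper quotes exactly this as \cite[Prop.\ 3.5]{GS} in Remark \ref{rmk:GS}), and control the distortion by the uniform Lipschitz bound $1\leq \fa'_{kH}\leq \rmax$, getting $\Cbar=\rmax\, C_0$. The paper explicitly warns (remark after Step 1) that one cannot simply apply the limit statement to a cofinal sequence and interchange limits; your uniform, $k$-independent bound on $\fa'_{kH}$, together with the restriction to action $\leq kb$ with $b<a$ keeping the relevant bars strictly inside the comparison window, is precisely the ``additional reasoning'' that warning calls for, so the worry is answered rather than evaded. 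What your route buys is conceptual clarity (the statement really is the transport of the $\SH$-barcode bound through the filtration change $\fa_{kH}$) and a cleaner constant; what it costs is exactly the technical debt you flag: upgrading Proposition \ref{prop:f} and its loosely stated naturality into an honest isomorphism of persistence modules in the limit over a cofinal family with fixed $\rmax$ (including reconciling the shifted levels $\fa_{H_m}(t)\to t$ with the fixed-level direct system defining $\SH^t(\alpha)$, and the cofinality caveat of Remark \ref{rmk:adm}, handled by interleaving with admissible Hamiltonians $H_m-\eps_m$ at the price of an $\eps_m\to 0$ action shift). These are routine but must be written out; none of them is a gap in the idea. Two small points to keep straight in the write-up: classes of $\HF^{I}(kH)$ are carried not only by bars born in $I$ but also by death-ends of bars born below $I$ — both kinds are born $\leq kb$, so your reduction still goes through — and bars born exactly at action $0$ (the $\H(W,\p W)$ classes) sit on the boundary of your open window, but they cause no trouble since $\SH(\alpha)=0$ kills them within action $C_0$ as well.
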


In particular, this theorem applies when $\alpha$ is a contact form on
$M\cong S^{2n-1}$ supporting the standard contact structure. Indeed,
in this case, we can think of $M$ as the boundary of a star-shaped
domain $W\subset\WW=\R^{2n}$; and $\SH(\alpha)=0$ since $W$ is
displaceable in $\WW$; see Section \ref{sec:SH}.

\begin{Remark}
  \label{rmk:GS}
  This theorem is closely related to \cite[Prop.\ 3.5]{GS}, which is
  essentially due to Kei Irie, asserting that for some constant $C>0$
  depending only on $\alpha$ the map
  $\SH^I(\alpha)\to \SH^{I+C}(\alpha)$ is zero for any interval $I$ if
  and only if $\SH(\alpha)=0$. In this paper, we need
      Theorem \ref{thm:vanishing} since we work at the level of
      Hamiltonians. The proof of Theorem \ref{thm:vanishing} follows
  roughly the same path as the proof of that proposition with natural
  complications arising from the argument having to deal with a
  specific Hamiltonian rather than a direct limit. We also note that
  Theorem \ref{thm:vanishing} is considerably stronger than Lemma
  \ref{lem:vanishing}. The key difference is that the constant $\Cbar$
  here can be taken independent of $I\subset (-\infty, \, kb]$ and
  $k$, as long as $k$ is sufficiently large. Moreover, $k$ need not be
  an integer here.
\end{Remark}

\begin{Remark}
  As stated, the theorem does not hold for admissible
  Hamiltonians. For the sequence $kH$ is then decreasing on $W$ and
  one has to account for this fact by adding a constant, linearly
  increasing with $k$, to $kH$ or to $\Cbar$. Furthermore, the
  requirement that $I\subset (-\infty, \, kb]$ is essential; for in
  general $\HF(kH)\neq 0$ even though $\SH(\alpha)=0$.
\end{Remark}

\begin{proof} Let $a=\slope(H)$. By the long exact sequence of
    Floer homology groups induced by the inclusion of action
    intervals, it suffices to prove the theorem for a semi-infinite
  interval, which we denote by
  $$
  (-\infty, \, \beta]:=I \subset (-\infty, \, kb].
  $$
  Hence our goal is to show that the map
$$
\HF^{\beta}(kH)\to \HF^{\beta+\Cbar}(kH)
$$
is identically zero for all large $k$ and $\beta\leq kb$ and some
constant $\Cbar>0$.  We carry out the argument in three steps.

\medskip\noindent\emph{Step 1.} Let $\eps>0$ and $\delta>0$ be
sufficiently small. Then, by \eqref{eq:HFlow1} and \eqref{eq:HFlow2}
$\HF^\delta_*(\eps H)\cong \H_{*+n}(W,\p W)$ and, in particular,
$\HF^\delta_{n}(\eps H)\cong \F$. On the other hand, by Lemma
\ref{lem:vanishing}, the natural inclusion/homotopy map
$\iota\colon \HF^\delta_n(\eps H)\to \HF^C_n(\lambda H)$ is
identically zero for some $C>0$ and a sufficiently large $\lambda$
since $\SH(\alpha)=0$.

Next, consider the commutative diagram
\begin{center}
\begin{tikzpicture}
\node (a1) {$\HF^{\beta}(kH)\otimes\HF^{\delta}_{n}(\eps H)$};
\node [right of=a1, xshift=45mm] (a2)
{$\HF^{\beta+\delta}\big((k+\eps)H\big)$};
\node [below of=a1, yshift=-10mm] (b1)
{$\HF^{\beta}(kH)\otimes\HF^{C}_{n}(\lambda H)$};
\node [right of=b1, xshift=45mm] (b2)
{$\HF^{\beta+C}\big((k+\lambda)H\big)$};

\path[draw, -latex'](a1) -- node[yshift=3mm]{} (a2);
\path[draw, -latex'](b1) -- node[yshift=3mm]{} (b2);
\path[draw, -latex'](a1) -- node[xshift=-5mm,
yshift=1mm]{$\id\otimes \iota$} (b1);
\path[draw, -latex'](a2) -- node[yshift=1mm, xshift=2mm]{$\Psi$} (b2);
\end{tikzpicture}
\end{center}
where $\beta \notin \Ss(kH)$ and $\delta>0$ is small enough so that $[\beta, \beta+\delta] \cap \Ss(kH) =\emptyset$.
Here $\Psi$ is induced by a monotone increasing homotopy of the
Hamiltonians and the inclusion of action intervals, and the horizontal
arrows are given by the pair-of-pants product. The top horizontal
arrow in this diagram is an isomorphism since the generator of
$\HF^\delta_{n}(\eps H)\cong \F$ is a ``unit'' with respect to the
pair-of-pants product; see Section \ref{sec:Floer-cont}. Therefore,
$\Psi=0$ since $\iota=0$. Furthermore,
$$
\HF^{\beta+\delta}\big((k+\eps)H\big)\cong \HF^{\beta}(kH)
$$
when $\delta>0$ and $\eps>0$ are sufficiently small. Composing this
isomorphism with $\Psi$, we conclude that the map
$$
\Phi\colon \HF^{\beta}(kH)\to \HF^{\beta+C}\big((k+\lambda)H\big),
$$
again induced by a monotone increasing homotopy of the Hamiltonians
and the inclusion of action intervals, is also zero. This completes
the first step of the proof.

\begin{Remark}
  Continuing Remark \ref{rmk:GS}, note that to prove \cite[Prop.\
  3.5]{GS} we could simply apply this argument to an admissible
  Hamiltonian $H_k$ in place of $kH$ from a cofinal sequence and then
  pass to the limit. However, for the iterates of a specific
  Hamiltonian and an unbounded action range an additional reasoning
  is needed. This is in essence the question of uniform convergence
  and interchanging the limits; cf.\ Remark \ref{rmk:GS}.
\end{Remark}

\noindent\emph{Step 2.}
By Proposition \ref{prop:f}, a monotone increasing homotopy from $kH$
to $(k+\lambda)H$ induces an isomorphism in the filtered homology
$$
\HF^{\tau}(kH)\to \HF^{f(\tau)}\big((k+\lambda)H\big).
$$
Here, as in Section \ref{sec:invariance}, 
$$
f\colon [0,\, A_{kH}(r_{\max})]=[0,\, kA_{H}(r_{\max})] \to [0,\,
A_{(k+\lambda)H}(r_{\max})]=[0,\, (k+\lambda)A_H(r_{\max})] 
$$
is the function $\fa_{(k+\lambda)H}\circ \fa_{kH}^{-1}$. We claim that
\begin{equation}
  \label{eq:f}
\tau\geq f(\tau) \geq \tau  - \lambda h(r_{\max}).
\end{equation}
The exact value of the right-hand side in \eqref{eq:f} is inessential
for our purposes. However, it is important that the right-hand side,
in contrast with the function $f$, is independent of~$k$.

Postponing the proof of the claim, let us finish the proof of the
theorem. First observe that $\beta+C$ is in the range of $f$, i.e.,
$$
\beta+C\leq f\big(kA_H(r_{\max})\big)
$$
when $k$ is large enough.  Indeed, by \eqref{eq:maxAH} with $H$
replaced by $kH$ and $a$ replaced by $ka$, we have
$$
kA_H(r_{\max})\geq ka.
$$
Also recall that $\beta\leq kb$. Thus, by \eqref{eq:f}, $\beta+C$ is
in the range of $f$ whenever
$$
kb + C\leq ka-\lambda h(\rmax).
$$
This condition is automatically satisfied when $k$ is sufficiently
large since $b<a$.

Furthermore, since $f$ is monotone increasing, applying \eqref{eq:f}
to $\tau=\beta+C+\lambda h(\rmax)$, we see that
\begin{equation}
  \label{eq:Cbar}
\beta+C+ \lambda h(\rmax)\geq f^{-1}(\beta+C).
\end{equation}

Consider now the sequence of maps
$$
\HF^{\beta}(kH)\stackrel{\Phi=0}
{\longrightarrow}\HF^{\beta+C}\big((k+\lambda)H\big)\to
\HF^{f^{-1}(\beta+C)}(kH)\to \HF^{\beta+C+ \lambda h(\rmax)}(kH).
$$
Here $f^{-1}(\beta+C)\geq \beta+C>\beta$ and, by Proposition
\ref{prop:f}, the composition of the first two maps is the
inclusion-induced map
$$
\HF^{\beta}(kH)\to
\HF^{f^{-1}(\beta+C)}(kH),
$$
which is then identically zero, for $\Phi=0$. The action level
$f^{-1}(\beta+C)$ depends on $k$, but \eqref{eq:Cbar} provides an
upper bound which does not. Hence, setting
$$
\Cbar=C+\lambda h(\rmax),
$$
we conclude that the map
$$
\HF^{\beta}(kH)\to \HF^{\beta+\Cbar}(kH)
$$
is also zero.

\medskip\noindent\emph{Step 3.} To complete the argument, it remains
to prove \eqref{eq:f}.  Consider the linear monotone increasing
homotopy $F_s:=(k+s\lambda)H$, $s\in [0,1]$, from $F_0=kH$ to
$F_1=(k+\lambda)H$. (Strictly speaking, we should replace here $s$ by
a monotone increasing function on $\R$ equal to $0$ near $-\infty$ and
1 near $+\infty$. However, this technicality does not affect the
result of the calculation.) Setting
\begin{equation}
\label{eq:fs}
f_s:=\fa_{F_s}\circ \fa_{F_0}^{-1},
\end{equation}
we need to bound the change of $f_s(\tau)$ as $s$ ranges from 0 to 1.

Recall from \eqref{eq:fa} that $\fa_{F_s}(T)=A_{F_s}\big(r(s)\big)$,
where
\begin{equation}
  \label{eq:rs-T}
(k+s\lambda)h'(r)=T.
\end{equation}
(In other words, for a closed Reeb orbit with period $T$, the
corresponding 1-periodic orbit of $F_s$ lies on the level $r=r(s)$.)
We also note that the domain of $\fa_{F_s}$ changes with $s$
increasing from $[0,\,ka]$ for $s=0$ to $[0,\,(k+\lambda)a]$ for
$s=1$. Below we always assume that $T$ is in the range $[0,\,ka]$ of
$\fa_{F_0}^{-1}$.

Differentiating \eqref{eq:rs-T} with respect to $s$, we have
\begin{equation}
  \label{eq:action(s)}
\lambda h'(r)+(k+s\lambda)h''(r)r'=0.
\end{equation}
Furthermore, recall that by \eqref{eq:AH} 
$$
\fa_{F_s}(T) = A_{F_s}(r)=(k+s\lambda) \big(r h'(r)-h(r)\big).
$$
Differentiating again, we see that
\begin{equation*}
  \begin{split}
    \frac{d}{ds}\fa_{F_s}(T)
    &=\lambda \big(r h'(r)-h(r)\big)\\
    &\quad +(k+s\lambda) \big(r' h'(r)+ rh''(r)r' -
    h'(r)r')\big)\\
    &=\lambda \big(rh'(r)-h(r)\big)+(k+s\lambda)r h''(r)r'.
  \end{split}
\end{equation*}
By \eqref{eq:action(s)}, the last term in this expression is
$-\lambda r h'(r)$, and hence

\begin{equation*}
  \begin{split}
    \frac{d}{ds}\fa_{F_s}(T)&=\lambda
    \big(rh'(r)-h(r)\big)+(k+s\lambda)r h''(r)r'\\
    &=\lambda \big(rh'(r)-h(r)\big) -\lambda r h'(r)\\
    &=-\lambda h(r). 
  \end{split}
\end{equation*}
Thus this derivative is non-positive and
$$
\left|\frac{d}{ds}\fa_{F_s}(T)\right|\leq \lambda h(r_{\max})
$$
for all $T\in [0,\, a]$. Therefore, by \eqref{eq:fs},
$$
0\geq \frac{d f_s}{ds}=\frac{d\fa_{F_s}}{ds}\circ
\fa_{F_0}^{-1}\geq -\lambda h(\rmax)
$$
on $[0,\, kA_H(\rmax)]$. Integrating with respect to $s$ and using the
fact that $f_0(\tau)=\tau$ and $f_1=f$, we obtain \eqref{eq:f}.
\end{proof}

\subsection{Index recurrence}
\label{sec:index-rec}
Index recurrence has a very different flavor from the first two
results of this section and is essentially a symplectic linear algebra
phenomenon.  Let, as in Section \ref{sec:CZ}, $\mu_\pm$ be the upper
and lower semicontinuous extensions of the Conley--Zehnder index to
the universal covering $\TSp(2m)$.  We need to introduce some further
invariants of $\Phi$, playing a central role in the index recurrence
theorem. Abusing notation, we will automatically extend all invariants
from $\Sp(2m)$ to $\TSp(2m)$ by applying them to the end-point.

Consider first a totally degenerate symplectic linear map
$A\in\Sp(2m)$. (In other words, $A$ is unipotent: all eigenvalues of
$A$ are equal to 1.)  Then we can write $A$ as $A=\exp(JQ)$, where $Q$
is symmetric and all eigenvalues of $JQ$ are equal to 0; see, e.g.,
the proof of \cite[Lemma 4.2]{GG:LS}.  We will view $Q$ as a quadratic
form.  It can be symplectically decomposed into a sum of terms of four
types:
\begin{itemize}
\item the identically zero quadratic form on $\R^{2\nu_0}$,
\item the quadratic form $Q_0=p_1q_2+p_2q_3+\cdots+p_{d-1}q_d$ in
  Darboux coordinates on $\R^{2d}$, where $d\geq 1$ is odd,
\item the quadratic forms $Q_\pm=\pm(Q_0+p^2_d/2)$ on $\R^{2d}$ for
  any $d$.
\end{itemize}
(This variant of the Williamson normal forms is taken from
\cite[Sect.\ 2.4]{AG}.) Clearly, $\dim\ker Q_0=2$ and
$\dim\ker Q_\pm=1$.  Let $b_*(Q)$, where $*=0,\pm$, be the number of
the $Q_0$ and $Q_\pm$ terms in the decomposition. Let us also set
$b_*(A):=b_*(Q)$ and $\nu_0(A):=\nu_0(Q)$.  These are symplectic
invariants of $Q$ and $A$, and the geometric multiplicity $\nug$ of
the eigenvalue $1$ is
$$
\nug=2(b_0+\nu_0)+b_++b_-.
$$
It is not hard to show (cf.\ \cite[Lemma 4.2]{GG:LS}) that for a path
$\Phi$ with $\Phi(1)=A$, we have
\begin{equation}
\label{eq:bpm}
\mu_+(\Phi)=\hmu(\Phi)+b_0+b_+ +\nu_0
\quad\textrm{and}\quad
\mu_-(\Phi)=\hmu(\Phi)-b_0-b_- -\nu_0.
\end{equation}

These formulas readily extend to all paths. Namely, every
$\Phi\in\TSp(2m)$ can be written (non-uniquely) as a product of a loop
$\varphi$ and the direct sum $\Psi_0\oplus \Psi_1$ where
$\Psi_0\in\TSp(2m_0)$ is a totally degenerate (for all $t$) path
$\Psi_0(t)=\exp(JQt)$ and $\Psi_1\in\Sp(2m_1)$ is
non-degenerate. As a consequence, $\mu(\Psi_0)=0$,
$m_0=\nu(\Phi)$ and $m_0+m_1=m$. (Note that $\varphi$ can be absorbed
into $\Psi_1$ unless $m_1=0$.)

Then we set
$$
b_*(\Phi):=b_*(\Psi_0)\textrm{ for $*=0,\pm$ and }
\nu_0(\Phi):=\nu_0(\Psi_0).
$$
These are symplectic invariants of $\Phi$, and
$$
\mu_+(\Phi)=
\hmu(\varphi)+\mu(\Psi_1)+b_0+b_+ +\nu_0
$$
and
$$
\mu_-(\Phi)=\hmu(\varphi)+\mu(\Psi_1)-b_0-b_- -\nu_0.
$$
Let also $2\nua(\Phi)$ be the algebraic multiplicity of the eigenvalue
1 of $\Phi(1)$. Clearly, $\nug/2\leq \nua\leq m$ and
$$
\nua\geq \nu_0+b_0+b_++b_-\geq b_+-b_-.
$$

\begin{Theorem}[Index Recurrence, \cite{GG:LS}]
\label{thm:IRT}
Let $\Phi_0,\ldots,\Phi_q$ be a finite collection of elements in
$\TSp(2m)$ with $\hmu(\Phi_i)>0$ for all $i$. Then for any $\eta>0$
and any $\ell_0\in\N$, there exists an integer sequence $d_s\to\infty$
and $q$ integer sequences $k_{is}$, $i=0,\ldots, q$, going to infinity
as $s\to\infty$ and such that for all $i$ and $s$, and all $\ell\in\N$
in the range $1\leq \ell\leq \ell_0$, we have
\begin{itemize}
\item[\reflb{IR1}{\rm{(i)}}] $\big|\hmu(\Phi^{k_{is}}_i)-d_s\big|<\eta$,
\item[\reflb{IR2}{\rm{(ii)}}]
  $\mu_\pm(\Phi^{k_{is}+\ell}_i) = d_s + \mu_\pm(\Phi^\ell_i)$,
\item[\reflb{IR3}{\rm{(iii)}}]
  $\mu_+(\Phi^{k_{is}-\ell}_i)= d_s -
  \mu_-(\Phi^{\ell}_i)+\big(b_+(\Phi^{\ell}_i)-b_-(\Phi^{\ell}_i)\big)$.
\end{itemize}
In particular,
$\mu_+(\Phi^{k_{is}-\ell}_i)\leq d_s -
\mu_-(\Phi^{\ell}_i)+\nua(\Phi^{\ell}_i)$, and
$\mu(\Phi^{k_{is}-\ell}_i)= d_s - \mu(\Phi^{\ell}_i)$ when $\Phi_i$ is
non-degenerate. Furthermore, for any $N\in \N$ we can make all $d_s$
and $k_{is}$ divisible by~$N$.
\end{Theorem}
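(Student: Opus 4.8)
The plan is to reduce the statement to the iteration theory of the Conley--Zehnder index and then to a simultaneous Diophantine approximation. Since $\hmu$ is homogeneous, $\hmu(\Phi_i^k)=k\,\hmu(\Phi_i)$, so condition (i) will hold automatically once $k_{is}$ is chosen with $|k_{is}\hmu(\Phi_i)-d_s|<\eta$; the real content is in the exact integer identities (ii) and (iii). First I would put each $\Phi_i$ into symplectic normal form, decomposing it (after peeling off a loop, as in the discussion preceding \eqref{eq:bpm}) into a totally degenerate block $\Psi_0^{(i)}$, a hyperbolic block, blocks with eigenvalue $-1$ or other roots of unity, and genuinely elliptic blocks with irrational rotation numbers $\theta^{(i)}_j\in(0,1)$. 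For each block type there is an explicit Bott-type formula for $\mu_\pm$ of the $k$-th iterate: it equals $k\,\hmu$ plus a correction built from the integers $\lfloor k\theta^{(i)}_j\rfloor$ together with constants, and the invariants $b_0,b_\pm,\nu_0$ of $\Phi_i^{k}$ are read off from the same normal forms, the only $k$-dependence coming from which root-of-unity eigenvalues become equal to $1$ upon iteration (the hyperbolic and unipotent contributions being otherwise constant in $k$). I would either quote these formulas from \cite{GG:LS} or re-derive them block by block, as in Long's iteration theory \cite{Lo,LZ}.

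The second step is the choice of $d_s$ and $k_{is}$. Collect the finitely many frequencies $\hmu(\Phi_i)$ and $\theta^{(i)}_j$. Restrict $d_s$ to an arithmetic progression $N'\Z$, where $N'$ is a common multiple of $N$ and of the denominators of all rational $\theta^{(i)}_j$, chosen so that along the corresponding progression of $k$'s every rational rotation number contributes an integer, hence $\{k\theta\}=0$, and the unipotent structure of $\Phi_i^k$ stabilizes. On this progression the remaining irrational data define a translation on a torus $(\R/\Z)^D$ whose generator is (a scalar multiple of) $\big(1/\hmu(\Phi_i)\,;\ \theta^{(i)}_j/\hmu(\Phi_i)\big)_{i,j}$; by the Kronecker--Weyl recurrence theorem the orbit of the origin returns arbitrarily close to the origin infinitely often with the return "time" tending to infinity. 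This produces integers $d_s\to\infty$, divisible by $N$, together with integers $k_{is}$ with $k_{is}\hmu(\Phi_i)$ within $\eta$ of $d_s$ and with all fractional parts $\{k_{is}\theta^{(i)}_j\}$ smaller than any prescribed $\delta>0$; enlarging $N'$ arranges $N\mid k_{is}$ as well. Choosing $\delta<\min\{\{\ell\theta^{(i)}_j\},\ 1-\{\ell\theta^{(i)}_j\}\}$ over the finitely many $i,j$ and $1\le\ell\le\ell_0$ (all of these are nonzero, since the rational rotation numbers were cleared) makes the choice admissible.

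Finally I would substitute $k=k_{is}$ into the iteration formulas. Because $\{k\theta^{(i)}_j\}$ is smaller than $\{\ell\theta^{(i)}_j\}$ there is no carry in $\lfloor(k+\ell)\theta^{(i)}_j\rfloor=\lfloor k\theta^{(i)}_j\rfloor+\lfloor\ell\theta^{(i)}_j\rfloor$, so the elliptic contribution to $\mu_\pm(\Phi_i^{k+\ell})$ splits as the sum of those to $\mu_\pm(\Phi_i^{k})$ and $\mu_\pm(\Phi_i^{\ell})$; tracking the essentially constant unipotent and hyperbolic parts yields $\mu_\pm(\Phi_i^{k+\ell})=d_s+\mu_\pm(\Phi_i^{\ell})$, which is (ii), and simultaneously pins down $d_s$ as the common extremal value, independent of $i$ and of the sign. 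For $k-\ell$ one instead has $\lfloor(k-\ell)\theta^{(i)}_j\rfloor=\lfloor k\theta^{(i)}_j\rfloor-\lceil\ell\theta^{(i)}_j\rceil$; replacing floor by ceiling on the non-integer terms interchanges $\mu_+\leftrightarrow\mu_-$ in $\mu_\pm(\Phi_i^{\ell})$, while the residual discrepancy on the blocks with eigenvalue $1$ or $-1$ is exactly $b_+(\Phi_i^{\ell})-b_-(\Phi_i^{\ell})$ by \eqref{eq:bpm}, giving (iii); the inequality with $\nua$ then follows from $\nua\ge b_+-b_-$, and the non-degenerate case is the special case with no unipotent block. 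The step I expect to be the main obstacle is the block-by-block verification of these floor/ceiling identities uniformly over the bounded range $1\le\ell\le\ell_0$, simultaneously with keeping the approximation compatible with the divisibility constraint and with producing one integer $d_s$ valid for all $i$ (this is essentially the common index jump phenomenon); for a fully rigorous account the cleanest route is to invoke \cite{GG:LS}, which is what we do.
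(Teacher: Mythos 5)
The paper does not actually prove Theorem \ref{thm:IRT}: it imports it from \cite{GG:LS}, remarking only that it can also be derived from the common jump theorem of \cite{Lo,LZ} and that the underlying mechanism is Kronecker's theorem (one non-degenerate path) and Minkowski's theorem on simultaneous approximations (several paths). Your proposal -- normal forms and Bott-type iteration formulas, an arithmetic progression to neutralize root-of-unity eigenvalues and secure divisibility by $N$, simultaneous Diophantine approximation for the irrational angles, and a final deferral to \cite{GG:LS} for full rigor -- is therefore essentially the same route the paper takes, just fleshed out. One step of your sketch, however, is stated too strongly: Kronecker--Weyl recurrence brings the torus orbit back near the origin, i.e.\ it makes the distance from $k_{is}\theta^{(i)}_j$ to $\Z$ small, but it cannot in general put all fractional parts $\{k_{is}\theta^{(i)}_j\}$ into $(0,\delta)$. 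For instance, if two elliptic blocks carry angles $\theta$ and $1-\theta$, then $\{k\theta\}+\{k(1-\theta)\}=1$ for every $k$ with $k\theta\notin\Z$, so the two fractional parts can never both be small; such resonances are exactly the situation one cannot perturb away here. The repair is to run your floor/ceiling bookkeeping with the nearest integer $n_j$ to $k_{is}\theta^{(i)}_j$: as soon as $\operatorname{dist}(k_{is}\theta^{(i)}_j,\Z)<\operatorname{dist}(\ell\theta^{(i)}_j,\Z)$ one has $\lfloor (k_{is}+\ell)\theta^{(i)}_j\rfloor=n_j+\lfloor \ell\theta^{(i)}_j\rfloor$ and $\lfloor (k_{is}-\ell)\theta^{(i)}_j\rfloor=n_j-\lceil \ell\theta^{(i)}_j\rceil$, so your identities (ii) and (iii) and the definition of $d_s$ go through verbatim with $2\lfloor k_{is}\theta^{(i)}_j\rfloor$ replaced by $2n_j$; this two-sided setup is how \cite{GG:LS} and \cite{LZ} actually proceed. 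The remaining genuinely delicate point -- that the integers assembled blockwise give literally the same $d_s$ for all $i$, not merely values within $\eta$ of each other -- is the ``common jump'' phenomenon you correctly flag and defer to \cite[Sec.\ 5]{GG:LS}, which is precisely what the paper does as well.
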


This theorem asserts, in particular, that every arbitrarily long
segment starting at $\mu_\pm(\Phi_i)$ and ending at
$\mu_\pm(\Phi_i^{\ell_0})$ will reoccur infinitely many times in the
sequence $\mu_\pm(\Phi_i^k)$, $k\in\N$, up to a common shift
independent of $i$. (Hence the name of the theorem.) When
the paths are non-degenerate we can take the symmetric segment
$\mu(\Phi_i^{-\ell_0}),\ldots, \mu(\Phi_i^{\ell_0})$ with
$\mu(\Phi^0)$ omitted. We also note that when $\Phi$ is dynamically
convex, i.e., $\mu_-(\Phi_i)\geq m+2$, we have
$\mu_+(\Phi^{k_{is}-\ell}_i)\leq d_s-2$ in \ref{IR3}.

Theorem \ref{thm:IRT} can be easily derived from the Common
Index Jump Theorem from \cite{Lo,LZ} or proved
independently. We refer the reader to \cite[Sec.\ 5]{GG:LS} for a
direct proof of a more precise result than stated here. Note that for
one non-degenerate path $\Phi:=\Phi_0$ the assertion readily follows
from Kronecker's theorem -- it is enough to pick the iterations
$k_s:=k_{0s}$ so that all elliptic eigenvalues of $\Phi^{k_s}$ are
sufficiently close to 1. In a similar vein, the more general case of
several non-degenerate paths ultimately relies on Minkowski's theorem
on simultaneous homogeneous approximations.

\section{Proofs of the main theorems}
\label{sec:main-pf}

In this section we prove Theorems \ref{thm:main1} and
\ref{thm:main2}. When $2n-1=1$, Theorem \ref{thm:main1} holds
trivially. Hence, we can assume in both of the proofs that
$2n-1\geq 3$.

\subsection{Proof of Theorem \ref{thm:main1}: Hyperbolic periodic
  orbits and multiplicity}
\label{sec:mult}
We argue by contradiction. Assume that the Reeb flow has only finitely
many simple periodic orbits with positive mean index. We denote these
orbits by $x_0, x_1,\ldots, x_q$. Let $T_i$ be the period of
$x_i$. Without loss of generality we can assume that $z=x_0$ is a
hyperbolic orbit with $\hmu(z)>0$ and $\mu(z)\geq 3$.

In the notation from Section \ref{sec:conv}, let $H$ be a
semi-admissible Hamiltonian such that the slope $a$ of $H$ is greater
than $\max T_i$, where $T_i$ is the period of $x_i$. In particular, $T_0<a$. The $1$-periodic orbits of $kH$
have the form
$$
\tx_i^j(k):=(x_i^j, r_{ij}(k)),
$$
where $x_i^j$ is the $j$-th iterate of $x_i$ and
$r_{ij}(k)\in (0,\infty)$ is determined by the condition
\begin{equation}
  \label{eq:rijk}
kh'\big(r_{ij}(k)\big)=jT_i;
\end{equation}
see \eqref{eq:level}. Note that for every fixed $i$ the range of $j$
is $\big[1, \lfloor ka/T_i \rfloor \big]$. In particular, it is always
non-empty since $T_i <a$; the range grows linearly with $k$ and can be
larger than $k$.

For technical reasons it will also be useful to require that
\begin{equation}
  \label{eq:a2}
\max_i \hmu(z) T_i/\hmu(x_i) < a .
\end{equation}
Finally, we also need to take $a$ sufficiently large so that Theorem
\ref{thm:vanishing} applies with $b$ to be specified later; see
\eqref{eq:b}.

When the iteration order $k$ of the Hamiltonian is clear from the
context, we will suppress it in the notation and simply write
$\tx_i^j$ for $\tx_i^j(k)$. For $i=0$ we will denote the corresponding
orbit by $\tz^j$.  Note that, in spite of the notation, in general
$\tx_i^j$ is not the $j$-th iterate of $\tx_i^1$. However, an
exception is the case of $j=k$. Namely, by \eqref{eq:rijk},
$r_{ik}(k)=r_{i1}(1)$, and hence
$$
\tx_i^k(k)= (x_i^k, r_{i1}(1)).
$$
Denoting $r_*=r_{01}(1)$, we have,  in particular,
\begin{equation}
  \label{eq:r*}
\tz^k(k)=(z^k, r_*), \text{ where } h'(r_*)=T_0.
\end{equation}
The orbit $\tz^k(k)$ gives rise to two generators $\cz^k$ and $\hz^k$
of degrees $|\cz^k|=\mu(z^k)$ and $|\hz^k|=\mu(z^k)+1$ respectively in
the Floer complex of $kH$.

Throughout the proof we will need to make sure that the pair $(H,r_*)$
meets the conditions of Theorem \ref{thm:CE}. It is easy to show that
such Hamiltonians exist. For instance, let us start with $H$ such
that, in addition,
$$
h'''\geq 0\textrm{ on } [1,\, \rho]
$$
for some $\rho>1$. At this point we do not necessary have
$r_*\le \rho$. Consider, however, the family of Hamiltonians
$\lambda H$ with $\lambda\geq 1$. For each of these Hamiltonians
$r_*=r_*(\lambda)$ is determined by
$$
h'(r_*)=T_0/\lambda,
$$
and hence $r_*\to 1$ as $\lambda\to \infty$. For $\lambda$ large
enough, we have $r_*<\rho$. Replacing $H$ by $\lambda H$ and keeping
the notation, we have a semi-admissible Hamiltonian $H$ meeting the
requirements of Theorem \ref{thm:CE}.  Thus $E(u)>\sigma$ for any
non-constant Floer cylinder for $kH$ asymptotic to $(z^k,r^*)$ at
either end, where the constant $\sigma>0$ is independent of $k$ and
$u$.

Fix 
\begin{equation}
   \label{eq:b}
  b> \CA_H(\tz)= r_*T_0-h(r_*).
 \end{equation}
 We will further require that $a=\slope(H)>b$, and hence Theorem
 \ref{thm:vanishing} applies.  Again, it is easy to see that $H$
 meeting all of the above requirements exist. Let $\Cbar>0$ be as in
 that theorem. In other words, the map
$$
\HF^{I}(kH)\to \HF^{I+\Cbar}(kH)
$$
vanishes for large $k$ and any $I \subset (-\infty,\, kb]$.

In the proof, we will apply the Index Recurrence Theorem (Theorem
\ref{thm:IRT}) with $m=n-1$ to the linearized Reeb flows $\Phi_i$
along $x_i$, including the flow along $z=x_0$. In the theorem, we can
take any $\eta<\min\{\sigma/C, 1/2\}$, where the constant $C$ is to be
specified later (see \eqref{eq:C}), and any $\ell_0\in \N$ with
\begin{equation}
  \label{eq:ell0}
  \ell_0>\frac{n+3}{\min_{i\geq 1} \hmu(x_i)}.
\end{equation}
We will need to have $s$ sufficiently large so that $d_s$ and $k_{is}$
are also large. For the sake of brevity, suppressing $s$ in the
notation, we set $d=d_s$, $k=k_{0s}$ and $k_i=k_{is}$ for
$i=1,\ldots,q$.

The key to the proof is the following result.

\begin{Lemma}
  \label{lemma:arrows}
  The length of the shortest arrow to or from $\hz^k$ in the
  Floer complex of $kH$ goes to infinity as $s\to \infty$, and hence
  $k\to\infty$.
\end{Lemma}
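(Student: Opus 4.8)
The plan is to argue by contradiction. Fix $L>0$ and suppose that for infinitely many $s$ there is an arrow of length at most $L$ to or from $\hz^k$, where $k=k_{0s}$; after passing to a subsequence, the other endpoint of this arrow lies in the cluster of a fixed family of orbits $\tx_i^j(k)$, with $i\in\{0,\dots,q\}$ fixed and $j=j(s)$. (The generators coming from $W$, and those coming from iterates of simple orbits of non-positive mean index, occupy a fixed range of degrees by \eqref{eq:CZ-mean}, whereas $|\hz^k|=\mu(z^k)+1\to\infty$, so they are excluded once $k$ is large.) Two preliminary facts are worth recording. Since $z$ is hyperbolic, $\hmu(\Phi_0^k)=\mu(z^k)\in\Z$, so part \ref{IR1} of Theorem \ref{thm:IRT} together with $\eta<1/2$ forces $d=\mu(z^k)$; thus $\hz^k$ has degree $d+1$ and any arrow joins it to a generator of degree $d$ or $d+2$. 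Moreover, Corollary \ref{cor:CE}, applicable since all closed Reeb orbits of $\alpha$ are isolated, furnishes a $\sigma>0$ independent of $k$ such that every arrow to or from $\tz^k$ (in particular every arrow to or from $\hz^k$) has length greater than $\sigma$. Finally, if $i=0$ then the generators of $\tz^j(k)$ have degrees $j\mu(z)$ and $j\mu(z)+1$, which meet $\{d,d+2\}=\{k\mu(z),k\mu(z)+2\}$ only when $j=k$ because $\mu(z)\ge 3$; but the only internal arrow, $\hz^k\to\cz^k$, has zero coefficient since $f_z$ is a perfect Morse function. Hence $i\ge 1$ in what follows.

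For $i\ge 1$ I would first extract two estimates on $j$. From $\CA_{kH}(\tx_i^j)=k\,\fa_H(jT_i/k)$, $\CA_{kH}(\tz^k)=k\,\fa_H(T_0)$ and $\fa_H'\ge 1$, the length bound gives $|jT_i-kT_0|\le L$. Since the cluster of $\tx_i^j$ must contain a generator of degree $d$ or $d+2$, \eqref{eq:support} gives $|j\hmu(x_i)-d|\le n+1$, that is, $|j\hmu(x_i)-k\hmu(z)|\le n+1$. Eliminating $j$ between these two estimates and letting $k\to\infty$ forces the resonance $\hmu(z)/T_0=\hmu(x_i)/T_i$ (if it fails, the two estimates are already incompatible for $k$ large). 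Combining $|j\hmu(x_i)-d|\le n+1$ with $|k_i\hmu(x_i)-d|<\eta$ from \ref{IR1} then yields, via \eqref{eq:ell0}, that $|j-k_i|\le\ell_0$, so $j=k_i+\ell$ with $\ell\in\Z$ and $|\ell|\le\ell_0$.

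It remains to run through the three cases for $\ell$. If $\ell>0$, then \ref{IR2} gives $\mu_-(x_i^j)=d+\mu_-(\Phi_i^\ell)\ge d+3$, since $\hmu(x_i^\ell)>0$ makes the standing hypothesis $\mu_-\ge 3$ apply; hence the whole cluster of $\tx_i^j$ lies in degrees $\ge d+3$ and contains no generator of degree $d$ or $d+2$, so there is no such arrow. If $\ell=-\ell'<0$, then \ref{IR3} gives $\mu_+(x_i^j)\le d-\mu_-(\Phi_i^{\ell'})+\nua(\Phi_i^{\ell'})\le d-2$, using the hypothesis $\mu_-(\Phi_i^{\ell'})\ge 2+\nua(\Phi_i^{\ell'})$; so the cluster lies in degrees $\le d-1$ and again contains no suitable generator. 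Finally, if $\ell=0$ (that is, $j=k_i$): combining \ref{IR1} with the resonance gives $|k_iT_i/k-T_0|\le C\eta/k$ for a constant $C$ depending only on the periods and mean indices of the $x_i$ and on $H$, which is exactly the constant $C$ in the standing choice $\eta<\min\{\sigma/C,1/2\}$; since $\fa_H'\le\rmax$ and the auxiliary perturbation shifts actions by an arbitrarily small amount, any arrow from $\hz^k$ into the cluster of $\tx_i^{k_i}$ would then have length at most $C\eta<\sigma$, contradicting Corollary \ref{cor:CE}. As every case is impossible, for all large $s$ every arrow to or from $\hz^k$ has length greater than $L$, which proves the lemma. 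I expect the resonant case $j=k_i$ to be the real obstacle: there $\tx_i^{k_i}$ has both Conley--Zehnder index and action arbitrarily close to those of $\tz^k$, so index combinatorics alone cannot exclude an arrow, and one genuinely needs the $k$-independent crossing-energy bound $\sigma$ together with the choice of $\eta$ small relative to $\sigma/C$.
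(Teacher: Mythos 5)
Your proposal is correct and is essentially the paper's own proof in contrapositive packaging: the same exclusion of $W$-generators, non-positive mean index orbits and the case $i=0$, the same localization $|j-k_{i}|\le \ell_0$ and case split on $\ell=j-k_i$ via conditions \ref{IR1}--\ref{IR3} with $d=\mu(z^k)$, and the same resonance dichotomy resolved by the $k$-independent crossing-energy bound when $j=k_i$; your use of the Lipschitz bounds $1\le \fa_H'\le \rmax$ to get both $|jT_i-kT_0|\le L$ and the resonant estimate $C\eta<\sigma$ is a mild streamlining of the paper's constants $c_1,c_2,C_1,C_2$ (and renders \eqref{eq:a2} unnecessary), not a different route. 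One small caution: the hypothesis of Corollary \ref{cor:CE} (``all closed Reeb orbits are isolated'') is not granted under the assumptions of Theorem \ref{thm:main1}, which constrain only the orbits of positive mean index, so you should instead quote the energy bound for the cluster of $\tz^k$ furnished by Theorem \ref{thm:CE} together with the perturbation/target-compactness remark, exactly as in the setup preceding the lemma.
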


The theorem readily follows from the lemma. Indeed, let
$A=\CA_{kH}(\tz^k)$ and $I=[A-2\Cbar,\, A+2\Cbar]$. Then $\hz^k$ is
closed in $\CF^I (kH)$ when $k$ is large enough. Furthermore,
$[\hz^k]\neq 0$ in $\HF^I (kH)$ and its image in $\HF^{I+\Cbar} (kH)$
is also non-zero. This contradicts Theorem
\ref{thm:vanishing}. (Alternatively, one can use here a variant of
\cite[Prop.\ 3.8]{CGG:Entropy}.)

\begin{proof}[Proof of Lemma \ref{lemma:arrows}]
  Clearly, $\hz^k$ is not connected by an arrow to any orbit $y$ with
  $\hmu(y)\leq 0$ or any generator arising from $W$ when $k$ is large;
  see Section \ref{sec:Fl-complex}. Hence, to prove the lemma, it
  suffices to show that the ``index distance'' from $\hz^k(k)$ to
  $\supp \HF\big(\tx^j_i(k)\big)$ is greater than or equal to 2 or the
  action difference between the action of $\tz^k(k)$ and $\tx^j_i(k)$
  becomes arbitrarily large as $k\to\infty$ or is smaller than
  $\sigma$. (We emphasize that all 1-periodic orbits here are taken
  for the same Hamiltonian $kH$ where $k=k_{0s}$. However, we usually
  suppress this role of $k$ in the notation. Thus $\tz^k:=\tz^k(k)$,
  $\hz^j:=\hz^j(k)$, $\tx^j_i:=\tx^j_i(k)$, etc.)
  
  First, note that since $z$ is hyperbolic, the mean index of $z^j$ is
  equal to its Conley--Zehnder index:
  \begin{equation}
    \label{eq:index-mindex}
\hmu(z^j)=j\hmu(z)=\mu(z^j)= j\mu(z)\in \Z.
\end{equation}
In particular, by Condition \ref{IR1} of Theorem
\ref{thm:IRT},
$$
d=\hmu(z^k)=k\hmu(z)=\mu(z^k)
$$
since $0<\eta<1/2$, and hence
$$
|\cz^k|=d \textrm{ and } |\hz^k|=d+1.
$$

For the sake of brevity, we will write $y:=x^j_i$ and $x=x_i$. Setting
$$
l =j-k_i,
$$
we will treat separately several cases determined by the value of $l$
and whether or not $z=x$ (i.e., $i=0$) or $z\neq x$ (i.e., $i\geq 1$).

\medskip 

\noindent\emph{Case 0: $z=x$, i.e., $i=0$.} Clearly, the orbits
$\cz^k$ and $\hz^k$ are not connected by a Floer arrow even though the
index difference is one. Next, due to the requirement that
$\mu(z)\geq 3$ and \eqref{eq:index-mindex}, for $j\neq k$ the degree
difference between $\cz^k$ or $\hz^k$ and $\cz^j$ or $\hz^j$ is at
least two. Hence, neither of the orbits $\cz^k$ or $\hz^k$ is
connected by a Floer arrow to either $\cz^j$ or $\hz^j$. From now on
we will assume that $i\geq 1$.

\medskip 

\noindent\emph{Case 1: $l=0$, i.e., $j=k_i$, and $i\geq 1$.}
This case is central to the proof of the theorem and the argument
amounts to controlling the action difference.

Recall that $k=k_{0s}$ and $j=k_{is}$ depend on the parameter $s$, and
$k\to\infty$ and $j\to\infty$ as $s\to\infty$. By Condition \ref{IR1}
of Theorem \ref{thm:IRT}, we have
$$
\frac{j}{k} \to \frac{\hmu(z)}{\hmu(x)} \quad \text{as} \quad s \to
\infty.
$$

Consider now the periodic orbits $\ty=(x^j,r^*)$ and $\tz^k=(z^k,r_*)$
of $H^{\# k}$, where $r^*:=r_{ij}(k)$. Here $r_*$ is determined by
\eqref{eq:r*} and
$$
kh'(r^*)=jT
$$
by \eqref{eq:rijk}. In other words,
\begin{equation}
  \label{eq:h'r*}
  h'(r^*)=\frac{j}{k} T \to
  \frac{\hmu(z)}{\hmu(x)}T
  \quad \text{as} \quad s\to\infty.
\end{equation}

We will break down the argument into two subcases: either
\begin{equation}
  \label{eq:2groups}
\frac{T}{\hmu(x)}=\frac{T_0}{\hmu(z)},
\end{equation}
or these two ratios are distinct. Here $T=T_i$ is the Reeb action of
$x_i$ and $T_0$ is the Reeb action of $z$.

We will show that for a large $s$, and hence $k$ and $j$,
\eqref{eq:2groups} results in a short action gap (smaller than
$\sigma$) between 1-periodic orbits $\ty$ and $\tz^k$ of $kH$ and the
inequality leads to a large action gap going to infinity as
$k\to \infty$. By rescaling $\alpha$, we can assume without loss of
generality that
\begin{equation}
  \label{eq:T0hmu}
T_0=\hmu(z) =d.
\end{equation}

Let us first focus on the former subcase -- the equality,
\eqref{eq:2groups}. Then we also have
\begin{equation}
  \label{eq:Thmu}
T=\hmu(x).
\end{equation}
Also note that the limit in \eqref{eq:h'r*} is then equal to $T_0$.

Let
$$
(h')^{\inv}\colon [0,\, a]\to [1,\, r_{\max}]
$$
be the inverse function of $h'$, where $a$ is the slope of $H$. This
is a continuous function, which is $C^\infty$-smooth on $(0,\,
a)$. (At the end-points $0$ and $a$ the derivative of this function is
$+\infty$.) Set
$$
C_1:= 2\,\frac{d (h')^\inv }{d\tau}(T_0) <\infty.
$$
Then, since $jT/k\to T_0$, when $k$ is large enough we have
$$
|r_*-r^*|\leq C_1 | T_0-jT/k |.
$$

Set
$$
C_2=\max_{[1,\, r_{\max}]} |A_h'|
= \max_{[1,\, r_{\max}]} r|h''(r)|<\infty.
$$
Then
\begin{equation*}
\begin{split}
  \big|\CA_{kH}(\ty)-\CA_{kH}(\tz^k)\big|
      &= \big|k A_h(r^*)-k A_h(r_*)\big| \\
      & \leq C_2 k | r^*-r_* | \\
      & \leq C_1C_2 k | T_0-jT/k | \\
      & = C_1C_2  | kT_0-jT | \\
      & = C_1C_2  | k\hmu(z)-j\hmu(x)| \\
      & \leq C_1C_2  \eta .
      \end{split}
    \end{equation*}
    Here we used Condition \ref{IR1} of Theorem \ref{thm:IRT} and the
    convention \eqref{eq:T0hmu}. Without this convention, the upper
    bound would be $\big(C_1C_2T_0/\hmu(z)\big)\eta$.

    Therefore,
$$
\big|\CA_{kH}(\ty)-\CA_{kH}(\tz^k)\big|<\sigma,
$$
when
\begin{equation}
  \label{eq:C}
C \eta<\sigma, \text{ where } C:=C_1C_2T_0/\hmu(z),
\end{equation}
and $\ty$ and $\tz^k$ are not connected by a Floer arrow by the
Crossing Energy Theorem (Theorem \ref{thm:CE}).

Next, let us focus on the subcase where \eqref{eq:2groups}
fails. Again, without loss of generality we can require
\eqref{eq:T0hmu}. Now, in place of \eqref{eq:Thmu}, we have
$$
|T-\hmu(x)|>\delta
$$ for some $\delta>0$.

Set
$$
c_1:=\min_{[0,\, a]}\left|\frac{d (h')^\inv (\tau)}{d\tau}\right|>0.
$$
Let $\xi>0$ be so small that
$$
1+\xi \leq (h')^{\inv}\big(\hmu(z)T/\hmu(x)\big)\leq \rmax-\xi
$$
and
$$
1+\xi \leq r_*=(h')^{\inv}(T_0)\leq \rmax-\xi.
$$
Such $\xi$ exists due to \eqref{eq:a2}. Then, by \eqref{eq:h'r*},
$r^*\in \Gamma$ and, by construction, $r_*\in \Gamma$, where
$\Gamma:=[1+\xi,\, \rmax-\xi]$. Let
$$
c_2:=\min_{\Gamma} |A_h'|\geq  (1+\xi) \min_\Gamma |h''|>0.
$$
We emphasize that both of the constants $c_1$ and $c_2$ are strictly
positive and the interval with end-points $r_*$ and $r^*$ is contained
in $\Gamma$ when $k$ is large. Therefore,
\begin{equation*}
\begin{split}
  \big|\CA_{kH}(\ty)-\CA_{kH}(\tz^k)\big|
      &= \big|k A_h(r^*)-k A_h(r_*)\big| \\
      & \geq c_2 k | r^*-r_* | \\
      & \geq c_1c_2 k | T_0-jT/k | \\
      & = c_1c_2  | kT_0-jT | \\
      & \geq c_1 c_2 \delta j- c_1c_2 |k\hmu(z)-j\hmu(x)| \\
      & \geq c_1 c_2 \delta j-  c_1c_2 \eta .
      \end{split}
    \end{equation*}
    As a consequence, the length of an arrow between $\tz^k$ and
    $\ty$, if it exists, is bounded from below by
    $c_1 c_2 \delta j- c_1c_2 \eta$. Furthermore,
    $$
    c_1 c_2 \delta j-  c_1c_2  \eta \to \infty
    $$
    as $j\to \infty$ and, hence, as $s \to \infty$.
      This completes the proof of the lemma in Case~1.

    \begin{Remark}
      Clearly condition \eqref{eq:2groups} is extremely
      non-generic. However, we cannot use a small perturbation to
      eliminate it, for the contradiction assumption that the Reeb
      flow of $\alpha$ has finitely many simple periodic orbits is
      already non-generic. In fact, under this assumption, one would
      expect resonance relations of this type to hold; see, e.g.,
      \cite[Sec.\ 6]{GG:LS} and references therein.
    \end{Remark}

    It remains to deal with the situation where
    $$
    l=j - k_i \neq 0
    $$
    and $i\geq 1$, which is handled via the index gap by Theorem
    \ref{thm:IRT}. Note that in the setting of that theorem
    $$
    \ell=| l |. 
    $$
    Recall also that, by \eqref{eq:support},
    $$
    \supp \HF(\ty)\subset \CI:=[\mu_-(y),\, \mu_+(y)+1]\subset
    [j\hmu(x)-n+1,\, j\hmu(x)+n].
$$

 \medskip 

 \noindent\emph{Case 2: $\ell:=|l| >\ell_0$ and $i\geq 1$.}
 Let us show that neither of the orbits $\cz^k$ or $\hz^k$ is
 connected by a Floer arrow to $\ty$. Assume first that $l$ is
 positive, i.e., $l>\ell_0$. Then
$$
\CI\subset k_i\hmu(x)+ [l\hmu(x)-n+1,\, l\hmu(x)+n]\subset d+
[l\hmu(x)-n,\, l\hmu(x)+n+1],
$$
where the second inclusion relies on the requirement that $0<\eta<1$
together with the fact that $|k_i\hmu(x)-d|<\eta$ by Condition
\ref{IR1} from Theorem \ref{thm:IRT}. Thus the distance from
$|\cz^k|=d$ or $|\hz^k|=d+1$ to $\CI$, and hence to $\supp \HF(\ty)$,
is at least 2; for $l\hmu(x)-n \geq \ell_0\hmu(x) -n\geq 2$ by
\eqref{eq:ell0}. When $l<-\ell_0$, the argument is similar.

 \medskip 

 \noindent\emph{Case 3: $\ell:=|l| \leq \ell_0$ and $i\geq 1$.}
 Arguing as in Example \ref{ex:no-arrows}, we claim that under the
 assumptions of the theorem, the index difference between
 $|\hz^k|=d+1$ and $\supp\HF(\ty)$ is again at least 2, and hence
 $\hz^k$ is not connected to $\ty$ by a Floer arrow. To see this,
 assume first that $l>0$. Then, by Condition \ref{IR2} of Theorem
 \ref{thm:IRT} with $l=\ell$,
$$
\CI\subset [d+\mu_-(x^\ell),\,\infty)\subset [d+3,\,\infty),
$$
and the distance from $|\hz^k|=d+1$ to $\CI$, and hence to
$\supp \HF(\ty)$, is at least 2. When $l<0$, setting $l=-\ell$, we
have
$$
\CI\subset (-\infty, \, d-\mu_-(x^\ell)
+\nua(x^\ell)+1]\subset (-\infty,\, d-1]
$$
by Condition \ref{IR3}. Therefore, the distance from $|\hz^k|=d+1$ to
$\supp \HF(\ty)\subset \CI$ is also at least 2. This completes the
proof of the lemma and the proof of Theorem~\ref{thm:main1}.
\end{proof}

\begin{Remark}
  \label{rmk:alt}
  Note that in Cases 0--2 of the proof, the orbits $\cz^k$ and $\hz^k$
  play similar roles and neither one of them is connected to $\ty$ by
  a (short) arrow. Moreover, no dynamical convexity type condition
  other than that $\mu(z)\geq 3$ was used in these steps.  However, in
  Case 3 the index conditions enter the picture and the symmetry
  between the orbits $\cz^k$ and $\hz^k$ breaks down. We have chosen
  to work with $\hz^k$ and impose the assumption that
  $\mu_-(x)\geq \max\big\{3,\, 2+\nua(x)\big\}$. Alternatively, one
  could have used here $\cz^k$, requiring instead that
  $\mu_-(x) \geq 3 + \nua(x)$ for all closed orbits including
  $x=z$. However, when $\alpha$ is degenerate this condition is more
  restrictive and does not appear to automatically follow from
  dynamical convexity. Furthermore, note that in the statement of
  Theorem \ref{thm:hyperbolic} and also in this modification of the
  theorem, we could have replaced $\nua$ by $b_+-b_-$.
\end{Remark}

\subsection{Proof of Theorem \ref{thm:main2}: Non-existence of locally
  maximal orbits}
\label{sec:pf-loc-max}
The proof follows the same line of reasoning as the proof of Theorem
\ref{thm:hyperbolic} and we will only outline the argument,
emphasizing the necessary modifications and skipping some details. We
will also keep the notation from that proof.

Thus let $z=x_0$ be a non-degenerate, locally maximal orbit of the
Reeb flow and let $x_1,\ldots,x_q$ be the remaining simple closed
orbits. By the dynamical convexity condition, $\hmu(x_i)\geq 2$ for
all $i=0,\ldots,q$.  As in the proof of Theorem \ref{thm:main1}, it is
easy to find an admissible Hamiltonian $H$ such that the Crossing
Energy Theorem (Theorem \ref{thm:CE}) holds for $\tz$. Furthermore,
the Index Recurrence Theorem (Theorem \ref{thm:IRT}) applies to the
orbits $z=x_0,\ldots,x_q$. As before, $k:=k_{0s}$.  Our goal is to
prove an analogue of Lemma \ref{lemma:arrows}. Namely, we will show
that at least one of the orbits $\cz^k$ or $\hz^k$ cannot be connected
by a short arrow to any other closed orbit.

Due to the non-degeneracy requirement, every Reeb orbit $x_i^j$ gives
rise to two generators $\cx_i^j$ and $\hx_i^j$ in $\CF(kH)$ with
$|\cx_i^j|=\mu(x_i^j)$ and $|\hx_i^j|=\mu(x_i^j)+1$.

Recall that, by Condition \ref{IR1} of Theorem \ref{thm:IRT},
$$
|d-\hmu(x_i^{k_{is}})|<\eta
$$
and that, by non-degeneracy,
$$
|\hmu(x_i^{j}) - \mu(x^{j}_i)|<n-1.
$$
for any $j\neq 0$. In particular,
$$
|d-\hmu(z^k)|<\eta
$$
and 
$$
|\mu(z^k) - \hmu(z^k)|<n-1.
$$
Set
$$
\ell_0:=n+1.
$$
Then, similarly to Case 2 of Lemma \ref{lemma:arrows}, when
$\ell:=|j-k_{is}| > \ell_0$, the degree difference between $\cz^k$ or
$\hz^k$ and $\cx_i^j$ or $\hx_i^j$ is greater than or equal to 2.

Next, as in Case 3, assume that $0< \ell=|j-k_{is}| \leq \ell_0$. We
claim that then $\cz^k$ is not connected to $\cx_i^j$ and $\hx_i^j$
when $d<\hmu(z^k)$ and that $\hz^k$ is not connected to either of
these orbits when $d>\hmu(z^k)$.

To see this, note first that when $j>k_{is}$ the range of $|\cx_i^j|$
is $[d+n+1, \, \infty)$ and the range of $|\hx_i^j|$ is
$[d+n+2, \, \infty)$ by Condition \ref{IR2} of Theorem
\ref{thm:IRT}. Likewise, when $j<k_{is}$, the range of $|\cx_i^j|$ is
$(-\infty, \, d-n-1]$ and the range of $|\hx_i^j|$ is
$(-\infty, \, d-n]$ by Condition \ref{IR3}.

When $d<\hmu(z^k)$, the range of $|\cz^k|$ is $[d-n+2, \, d+n-1]$ and
the range of $|\hz^k|$ is $[d-n+3, \, d+n]$. Thus the degree
difference between $\cz^k$ and $\cx_i^j$ or $\hx_i^j$ is at least 2.

When $d>\hmu(z^k)$, the range of $|\cz^k|$ is $[d-n+1, \, d+n-2]$ and
the range of $|\hz^k|$ is $[d-n+2, \, d+n-1]$. Therefore, in this
case, the degree difference between $\hz^k$ and $\cx_i^j$ or $\hx_i^j$
is at least 2.

Note that here we allow $i$ to be 0, and hence we have in particular
shown that $\cz^k$ or $\hz^k$ is not connected by an arrow with
$\cz^j$ and $\hz^j$ for any $j$. (This is an analogue of Case 0 of
Lemma \ref{lemma:arrows}, which is now incorporated into Cases 2 and
3.)

The remaining case is where $j=k_{is}$ and $i\geq 1$. This case is
handled exactly as Case 1 in the proof of Lemma \ref{lemma:arrows}:
the action difference is either less than $\sigma$, which is
impossible by Theorem \ref{thm:CE} applied to $z$, or goes to infinity
as $k\to\infty$, which is also impossible by Theorem
\ref{thm:vanishing}. This contradiction completes the proof of Theorem
\ref{thm:main2}. ~\hfill\qed

\section{Proof of the crossing energy theorem}
\label{sec:cross}
In this section we prove the Crossing Energy Theorem -- Theorem
\ref{thm:CE}. The proof comprises two major components. The first one,
Theorem \ref{thm:location}, deals with the location of Floer
trajectories. Roughly speaking, the goal of this part is to show that
a small energy Floer trajectory cannot enter the region where $r$ is
close to $1$ with a lower bound on $r$ independent of the order of
iteration. This part is completely new. The second component gives
then an energy lower bound, also independent of the order of
iteration. The proof of the second part is quite similar to the
argument in \cite{CGG:Entropy, GG:hyperbolic, GG:PR}.

\subsection{Location constraints}
\label{sec:location}

The key new ingredient in the proof of the Crossing Energy Theorem is
the following result which, under a minor additional condition on $H$,
limits the range of $r\circ u$.

Before stating the result we need to set some new conventions.
Throughout the proof, it is convenient to adopt a different
Hamiltonian iteration procedure (see, for instance,
    Remark \ref{rmk:salamon}).  Namely, passing to the $k$th iterate
of $H$ (or to be more precise of $\varphi_H$), rather than looking at
1-periodic orbits of the Hamiltonian $kH$, we will look at the
$k$-periodic orbits of $H$, changing the time range from $S^1=\R/\Z$
to $S^1_k=\R/k\Z$. We will refer to the resulting Hamiltonian as
$H^{\sharp k}$; cf.\ \cite{GG:hyperbolic, GG:PR}. A time-dependent
almost complex structure is then also assumed to be $k$-periodic in
time rather than 1-periodic. This modification does not affect the
Floer complexes, the action and the action filtration, the energy of a
Floer trajectory, etc., with an isomorphism given by an
$(s,t)$-reparametrization.

Furthermore, whenever $H|_W=\const$ a part of a solution $u$ of the
Floer equation is a (anti-)holomorphic curve. Hence, the standard
monotonicity (see, e.g., \cite{Si}) implies that solutions with small
energy can only enter a narrow collar of $M=\p W$ in $W$. Thus
extending $r$ to the collar as a function to
$(1-\eta,\,1]\subset (0,\,\infty)$, for some small $\eta>0$, as in
Section \ref{sec:setting} we can assume without loss of generality
that the composition $r\circ u\colon \R\times S^1_k \to (0,\,\infty)$
is always defined. By a suitable variant of the Gromov compactness
theorem (see, e.g., \cite{Fi}) the same is true for a $C^\infty$-small
perturbation of $H^{\sharp k}$.

It is convenient throughout the proof to think of $r$ as taking values
in $(0,\,\infty)$ rather than in $[1,\infty)$ or
$[1-\eta,\,\infty)$. Likewise, we extend $h$ to a smooth function on
$(0,\,\infty)$ by setting $h|_{(0,\,1]}\equiv 0$.

In what follows, $H$ is admissible or semi-admissible and $J$ is
admissible. In particular, $H$ and $J$ are independent of time. As
above, we will assume that all Floer cylinders $u$ we consider have
sufficiently small energy and hence are contained
$M\times (1-\eta,\,\infty)$ for some small $\eta>0$.

\begin{Theorem}
\label{thm:location}
Let $H(r,x) =h(r)$ be a semi-admissible Hamiltonian. Assume that
$1<r_*^-\leq r_*^+$ and $\delta >0$ are such that $1<r_*^--\delta$ and
$r_*^+ +\delta < \rmax$, and
\begin{equation}
  \label{eq:h'''2}
  h''' \geq 0 \textrm{ on } [1, \, r_*^++\delta] \subset [1, \, \rmax).
\end{equation}
Fix an admissible almost complex structure $J$. Then there exists
$\sigma >0$ (depending only on $H$, $J$, $r_*^\pm$ and
  $\delta$) such that for any $k \in \N$ and any Floer cylinder
$u \colon \R \times S_k^1 \to \WW$ of $H^{\sharp k}$ with energy
$E(u) \leq \sigma$ and asymptotic, at either end, to a periodic orbit
in $M\times [r_*^-,\,r_*^+]$, the image of $u$ is contained in
$M\times (r_*^--\delta,\, r_*^++\delta)$.
\end{Theorem}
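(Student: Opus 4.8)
The plan is to deduce Theorem~\ref{thm:location} from three things already at hand: the maximum principle \eqref{eq:max_prince}, Bourgeois--Oancea monotonicity \eqref{eq:monotone}, and a Sikorav-type monotonicity lemma for pieces of Floer cylinders, applied locally so that all constants stay independent of the iteration order $k$. The first, easy observation is that the upper bound is essentially free. Since $u$ is asymptotic at $-\infty$ to a periodic orbit on a level $r^+\le r_*^+$ (recall that the $\CA_{H^{\sharp k}}$-action, hence the level because $A_H$ is strictly increasing on $[1,\rmax]$, decreases along $u$), the maximum principle gives $\sup_{\R\times S^1_k}r\circ u\le r^+\le r_*^+<r_*^++\delta$. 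So the entire content is the lower bound $r\circ u>r_*^--\delta$, and this is exactly where the hypothesis $h'''\ge 0$ and the $k$-independence enter.

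I would argue the lower bound by contradiction. If it fails, then for each $j\in\N$ there are $k_j\in\N$ and a Floer cylinder $u_j$ of $H^{\sharp k_j}$ with $E(u_j)\le 1/j$, asymptotic at both ends to orbits on levels in $[r_*^-,r_*^+]$, but --- by the upper bound just established --- with $\min(r\circ u_j)\le r_*^--\delta$. First I would reduce to the shell: in the collar $\{r<1\}$ one has $H\equiv\const$ and $u_j$ is (anti-)holomorphic, so ordinary monotonicity for pseudo-holomorphic curves confines a small-energy $u_j$ to a fixed thin collar of $M$; extending $r$ as in Section~\ref{sec:setting}, we may assume each $u_j$ maps into $M\times(1-\eta,r_*^+]$, i.e.\ into the region where $h'''\ge 0$. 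Then, fixing $c\in(r_*^--\delta,r_*^-)$, Bourgeois--Oancea monotonicity forces $u_j(s,\cdot)$ to reach a level $\ge r_*^->c$ for \emph{every} $s$, while by assumption $u_j$ passes through a point $p_j$ with $r(p_j)=r_*^--\delta<c$.

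The heart of the argument is a $k$-uniform energy lower bound for this "dip". I would take a small geodesic ball $B(p_j,\tau)$ with $\tau<\delta/2$ and let $\Sigma_j$ be the connected component through the $p_j$-preimage of $u_j^{-1}(B(p_j,\tau))$. Because the asymptotic orbits of $u_j$ lie on levels $\ge r_*^-$, hence outside $B(p_j,\tau)$, the set $\Sigma_j$ is relatively compact and $u_j|_{\Sigma_j}$ genuinely leaves the ball; moreover on the shell $r\in[r_*^--\delta-\tau,\,r_*^-]$ the vector field $X_H=h'(r\circ u)R_\alpha$ has norm bounded above and below by positive constants depending only on $H$, which together with elliptic estimates bounds the $(s,t)$-diameter of $\Sigma_j$ independently of $k_j$. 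I can therefore delete the Hamiltonian term over $\Sigma_j$ by the Gromov trick $\tilde u_j(s,t)=\varphi_H^{\,t_0-t}(u_j(s,t))$ with $t_0$ a base point of $\Sigma_j$; since only a bounded amount of time is involved, the conjugated almost complex structure stays uniformly bounded, so Sikorav's monotonicity lemma yields $E(u_j)\ge E(u_j|_{\Sigma_j})\ge C\tau^2$ with $C=C(H,J)$ independent of $j$ and $k_j$. For $\tau$ close to $\delta/2$ this contradicts $E(u_j)\le 1/j\to 0$, and any $\sigma$ below this threshold proves the theorem.

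The role of $h'''\ge 0$ on $[1,r_*^++\delta]$ --- equivalently, convexity of the action function $A_H(r)=rh'(r)-h(r)$ since $A_H''=h''+rh'''$, or monotonicity of $h''$ --- is to provide the a priori control of $r\circ u$ needed to justify the localization uniformly in $k$: using \eqref{eq:complex_strc} one checks that $\rho:=r\circ u$ satisfies $\Delta\rho-\rho h''(\rho)\,\partial_s\rho\ge 0$, so $\rho$ is a subsolution of the uniformly elliptic operator $L:=\Delta-\rho h''(\rho)\,\partial_s$, whose drift is bounded on the shell, and when $h'''\ge 0$ the function $A_H\circ\rho$ is an $L$-subsolution as well; these maximum-principle facts are what confine $u$ to $M\times(1-\eta,r_*^+]$ and control the shape of $\Sigma_j$ with constants not depending on $k$. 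The genuinely new obstacle, absent for Hamiltonian diffeomorphisms, is precisely this uniformity in $k$: neither Gromov compactness along the fixed-energy sequence $u_j$ nor the naive global Gromov trick is available with $k$-uniform bounds, because $\varphi_H^t$ for large $t$ is a large Reeb rotation whose linearization may grow with $k$, so the argument is forced to be local.
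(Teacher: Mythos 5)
Your reduction steps are fine as far as they go: the upper bound via the maximum principle, the confinement to a thin collar by ordinary monotonicity where $H$ is constant, and the use of Bourgeois--Oancea monotonicity to force every slice up to the level $r_*^-$ (modulo the point that the hypothesis controls only \emph{one} asymptotic level, so in your contradiction setup you may not assume both ends lie in $[r_*^-,r_*^+]$; this is easily repaired using $E(u)=k\big(A_h(r^+)-A_h(r^-)\big)$ and $A_h'=rh''>0$ near the interval, exactly as in the paper's proof of Theorem \ref{thm:location}). The genuine gap is the core step: the assertion that the component $\Sigma_j$ of $u_j^{-1}\big(B(p_j,\tau)\big)$ has $(s,t)$-diameter bounded independently of $k_j$, which is what you need to run the Gromov trick with uniformly controlled conjugated almost complex structures and then invoke Sikorav monotonicity. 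Nothing in the hypotheses bounds the $t$-extent of $\Sigma_j$: since $\|\partial_s u_j\|=O\big(E(u_j)^{1/4}\big)$ is small, a slice $t\mapsto u_j(s,t)$ follows, over long time intervals, an $X_H$-trajectory, i.e.\ a reparametrized Reeb trajectory at the level $\approx r_*^--\delta$, and a Reeb trajectory of a fixed contact form can remain inside a ball of fixed radius for an arbitrarily long time (e.g.\ near an elliptic closed orbit or any small invariant region at that level; the local-maximality hypothesis of Theorem \ref{thm:CE} concerns $z$ at the level $r_*$, not the dynamics at the dip level). A lower bound on $\|X_H\|$ gives speed, not quick exit, and ``elliptic estimates'' do not supply the missing bound.

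Worse, even granting a window of bounded $t$-extent, the proposed mechanism does not produce the quantum $C\tau^2$: precisely because $E(u_j)\to 0$, near the dip the cylinder is $C^1$-close to a trivial cylinder over a flow line, and the Gromov trick straightens such a piece to a nearly constant map carrying essentially no area; ``leaving the ball'' in the flow direction does not survive the conjugation, so no $k$-uniform energy quantum can be attached to the dip by local monotonicity. This is why the paper's argument is global in $t$ rather than local: Steps 1--3 of Theorem \ref{prop:location} use the differential inequality for $\int_0^k r(u_s)\,dt$ and the monotonicity of $A_h(r)/r$ --- this is where $h'''\ge 0$ actually enters, not through $A_H\circ\rho$ being a subsolution, which would in any case give an upper rather than a lower bound --- to show that the total time a slice spends below the level $r^+-\rho$ is at most $r^+E(u)/\big(A_h(r^+)\rho\big)$, while the pointwise bound \eqref{eq:salamon} (Claim 2) shows that dipping by $\eta$ requires time of order $\eta\,E(u)^{-1/4}$; comparing the two yields the $k$-uniform estimate \eqref{eq:prop_loc}, from which Theorem \ref{thm:location} follows. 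Your proposal replaces this time-budget argument with a local area quantum that is not actually available, so the proof as written does not go through.
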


\begin{Remark}
  By the target-local Gromov compactness theorem from \cite{Fi}, the assertion of
  the theorem still holds with perhaps a smaller value of $\sigma$
  when $H^{\sharp k}$ and $J$ are replaced by their compactly
  supported, $k$-periodic in time $C^\infty$-small perturbations.
  (The size of the perturbation may depend on $k$.) Likewise, one can
  use perturbations of the type specified in Section \ref{sec:degen}.
\end{Remark}  

The proof of Theorem \ref{thm:location} is an easy application of the
following technical result. Recall from Section \ref{sec:setting} that
the action function $A_h$ is defined as $A_h(r)=rh'(r)-h(r)$, where
having extended $h$ to $(0,\,\infty)$ as 0 on $(0,\,1]$ we also have
$A_h\equiv 0$ on $(0,\,1]$.

\begin{Theorem}
\label{prop:location}
Let $H(r,x) =h(r)$ be a semi-admissible Hamiltonian and $J$ be an
admissible almost complex structure. There exist $\eps >0$ and $C >0$,
depending only on $H$ and $J$,
such that for any $k \in \N$ and any Floer cylinder
$u \colon \R \times S_k^1 \to \WW$ of $H^{\sharp k}$ with
$E(u)\leq \eps$ and contained in the region where $h''' (r) \geq 0$,
we have
\begin{equation}
\label{eq:prop_loc}
\inf_{\R \times S^1_k} r \big( u (s,t) \big)
\geq r^- - \frac{ C\cdot (r^+)^{3/4}}{\sqrt{A_h(r^+)}} E(u)^{5/8} ,
\end{equation}
where $ r^{\pm} = r \big( u (\mp \infty, t) \big)$ and
$A_h(r^+)=r^+ h' (r^+) - h(r^+)$.
\end{Theorem}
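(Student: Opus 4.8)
The plan is to control the depth of the deepest ``valley'' of $r\circ u$ below the level $r^-$. Set $\rho_0:=r^--\inf_{\R\times S^1_k}r\circ u$; if $\rho_0\le 0$ there is nothing to prove, so assume $\rho_0>0$ and fix a point $z_0=(s_0,t_0)$ at which $r(u(z_0))$ is within $\rho_0/2$ of the infimum. Two a priori confinements will be the backbone of the argument. First, the maximum principle (Section~\ref{sec:Floer-eq}, \eqref{eq:max_prince}) gives $r\circ u\le r^+$ everywhere. Second, Bourgeois--Oancea monotonicity \eqref{eq:monotone} gives $\max_{t\in S^1_k}r(u(s,t))\ge r^-$ for every $s$; in particular, on the slice $\{s_0\}\times S^1_k$ the function $r\circ u$ takes both a value $\le r^--\rho_0/2$ and a value $\ge r^-$, and more generally $\{r\circ u<r^-\}$ meets each slice in a proper union of arcs. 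This last observation is what will keep every subsequent quantity free of any dependence on the period $k$: the valley is confined in the $t$-direction, so we never have to integrate a non-negligible integrand over all of $S^1_k$.

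The analytic heart is a local first-order estimate for $\partial_s u$. Since $\partial_s u$ solves the linearisation of \eqref{eq:floer_2} along $u$, an interior elliptic / mean-value inequality applies on any small ball $B_\lambda(z_0)$ whose concentric double $B_{2\lambda}(z_0)$ carries energy below Gromov's threshold --- which holds automatically once $E(u)$ is small --- and yields
$$
\sup_{B_\lambda(z_0)}|\partial_s u|^2\ \le\ \frac{K}{\lambda^{2}}\int_{B_{2\lambda}(z_0)}|\partial_s u|^2,
$$
with $K$ depending only on $r^+$ (an upper bound for $r\circ u$, via the maximum principle) and the $1$-jet of $h$ on $[1,r^+]$. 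The hypothesis $h'''\ge 0$ on the relevant range enters here: it is what forces the differential inequality underlying the maximum principle for $r\circ u$ --- equivalently, the convexity of the action function $A_h$ --- to hold with the correct sign throughout the region swept by $u$, so that the coefficients in the elliptic estimate remain controlled. One then converts this gradient bound into an oscillation bound for $r\circ u$ on balls using the pointwise identities on the cone
$$
\partial_t(r\circ u)=r\,\alpha(\partial_s u),\qquad \partial_s(r\circ u)=dr(\partial_s u),
$$
which, together with $|\alpha|=r^{-1/2}$ and $|dr|=r^{1/2}$ for the metric $\omega(\cdot,J\cdot)$, give $|d(r\circ u)|\le \sqrt{r^+}\,|\partial_s u|$ wherever $r\circ u$ is defined.

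The contradiction is produced by a two-scale optimisation. From $z_0$, where $r\circ u\le r^--\rho_0/2$, one must travel to a point where $r\circ u\ge r^-$ (such a point exists on the same slice by \eqref{eq:monotone}); along the way $r\circ u$ must increase by at least $\rho_0/2$, and by the oscillation estimate this forces $|\partial_s u|$ to be appreciably large on the traversed region. The first scale $\lambda$ is the radius of the elliptic estimate: shrinking $\lambda$ sharpens the pointwise bound on $|\partial_s u|$ but also shrinks the room in which the required climb can take place. The second scale is an $s$-window $[s_0-L,s_0+L]$: since $\int_\R\big(\int_{S^1_k}|\partial_s u|^2\,dt\big)\,ds=E(u)$, some slice $s_1$ in this window has $\int_{S^1_k}|\partial_s u(s_1,\cdot)|^2\,dt\le E(u)/L$, which (by a one-dimensional argument on that slice) pins $\min_t r(u(s_1,\cdot))$ close to $r^-$, and propagating from $s_1$ back to $s_0$ costs no more than $\sqrt{r^+}\int|\partial_s u|$. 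Balancing $\lambda$, $L$ and $\rho_0$ against one another produces the stated bound $\rho_0\le C\,(r^+)^{3/4}A_h(r^+)^{-1/2}E(u)^{5/8}$; the factor $A_h(r^+)^{-1/2}$ appears because the natural length scale of the linearised equation near the level $r^+$ is governed by $A_h$, and the exponents $3/4$ and $5/8$ are what this balancing yields, in place of the $(r^+)^{1/2}E^{1/2}$ coming from the crudest single-scale estimate. All constants are manifestly independent of $k$.

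The main obstacle is the middle step: making the interior elliptic estimate fully quantitative, with its constant $K$ depending only on $r^+$ and the jet of $h$ on $[1,r^+]$ and on nothing that grows with $k$, and pinning down the exact point at which $h'''\ge 0$ is needed to close the underlying differential inequality; after that, the two-scale balancing is essentially bookkeeping, but it must be done with care to land on $5/8$ rather than $1/2$. Once \eqref{eq:prop_loc} is established, Theorem~\ref{thm:location} is immediate: it bounds $\inf r\circ u$ below by $r^--C'E(u)^{5/8}$ with $C'$ depending only on $h$ (there $r^+\le r_*^+\le\rmax$ and $A_h(r^+)\ge A_h(r_*^-)>0$), so this exceeds $r_*^--\delta$ as soon as $\sigma$ is small enough, while the upper confinement $r\circ u\le r^+\le r_*^+<r_*^++\delta$ is already supplied by the maximum principle.
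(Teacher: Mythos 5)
There is a genuine gap, and it sits exactly where you locate the ``analytic heart.'' Your two-scale scheme needs, at some point, a bound \emph{uniform in $k$} on how long a slice $u_s\colon S^1_k\to\WW$ can linger below the level $r^-$; your proposed source for this is the claim that a slice $s_1$ with $\int_{S^1_k}|\partial_s u(s_1,\cdot)|^2\,dt\le E(u)/L$ has $\min_t r(u(s_1,\cdot))$ pinned near $r^-$. That step fails: the only pointwise control you have in the $t$-direction is $|\partial_t(r\circ u)|=|r\,\alpha(\partial_s u)|\le\sqrt{r^+}\,\|\partial_s u\|$, and converting the $L^2$ slice bound into a bound on the drop of $r$ along the slice costs a Cauchy--Schwarz factor $\sqrt{k}$ (the slice has length $k$), so the resulting estimate degenerates as $k\to\infty$. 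Bourgeois--Oancea monotonicity \eqref{eq:monotone} only controls $\max_t r$ on each slice, not $\min_t r$, and the maximum principle \eqref{eq:max_prince} only gives the upper confinement; neither supplies the missing quantitative confinement of the ``valley'' in the $t$-direction. Relatedly, you misplace the role of $h'''\ge 0$: it is not needed for the maximum principle or for the constants in the interior gradient estimate (the pointwise bound $\|\partial_s u\|\le C'E(u)^{1/4}$ of \eqref{eq:salamon} holds for any semi-admissible $H$ and admissible $J$), so in your outline the hypothesis does no actual work, which is a sign that the mechanism producing the $k$-independent bound is absent.

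What the paper does instead, and what your proposal lacks, is an integrated (rather than pointwise) argument: differentiating $\int_0^k r(u_s)\,dt$ in $s$ and using that the action decreases along Floer cylinders yields the differential inequality \eqref{eq:step1}; the hypothesis $h'''\ge 0$ enters precisely to show that $A_h(r)/r$ is non-decreasing, which upgrades \eqref{eq:step1} to the $k$-independent lower bound \eqref{eq:step2} on the slice average of $r$ (equivalently $kr^+-E(u)\,r^+/A_h(r^+)$, using $E(u)=k\big(A_h(r^+)-A_h(r^-)\big)$); by Chebyshev this gives the measure estimate \eqref{eq:step3}, bounding independently of $k$ the time each slice spends below $r^+-\rho$. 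Only then does the pointwise bound $\|\partial_s u\|\le C'E(u)^{1/4}$ come in: climbing by $\eta/2$ within a set of $t$-measure at most $r^+E(u)/\big(A_h(r^+)\rho\big)$ forces $\eta/2\le C'\sqrt{r^+}\,E(u)^{1/4}\mu(s_0,\rho)$, and combining the two bounds is what produces the factor $A_h(r^+)^{-1/2}$, the power $(r^+)^{3/4}$ and the exponent $5/8$. Without an analogue of Steps 1--3 (action monotonicity plus the $A_h(r)/r$ monotonicity plus the measure bound), your balancing of $\lambda$, $L$ and $\rho_0$ cannot yield a $k$-independent estimate, so the proof as proposed does not close. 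Your final reduction of Theorem \ref{thm:location} to \eqref{eq:prop_loc} is fine.
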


\begin{Remark}
  \label{rmk:salamon}
  Let us specify how $\eps>0$ and $C>0$ in Theorem \ref{prop:location}
  are chosen. By \cite[Sec.\ 1.5]{Sa}, there exist $\eps'>0$ and
  $C'>0$, depending only on $H$ and $J$, such that for any Floer
  cylinder $u\colon \R\times S^1_k\to \WW$, the point-wise upper bound
\label{rmk:salamon}
\begin{equation}
\label{eq:salamon}
\big\| \partial_s u(s, t) \big\| < C' E(u)^{1/4}
\end{equation}
holds whenever $E(u) <\eps'$. (This is an instance when it is more
convenient to work with $H^{\sharp k}$ than $kH$; the proof of
\eqref{eq:salamon} is local in the domain of $u$, but the constants
$\eps'$ and $C'$ depend on $H$ and its first and second
derivatives. Note also that, by the maximum principle,
  \eqref{eq:max_prince}, the image of $u$ is contained in a fixed,
  independent of $k$, compact subset of $\WW$ determined by $H$.) In
the proof of the Theorem \ref{prop:location} we take $\eps = \eps'$
and $C=\sqrt{4C'}$.
\end{Remark}

Let us now prove Theorem \ref{thm:location} assuming Theorem
\ref{prop:location}.

\begin{proof}[Proof of Theorem \ref{thm:location}] 
  Let $\eps > 0$ be as in Theorem \ref{prop:location}. Choose
  $0< \sigma' <\eps$ such that $r^+-r^- <\delta/2$ for any $k \in \N$
  and any Floer cylinder $u$ of $H^{\sharp k}$ with energy
  $E(u) <\sigma'$, which is asymptotic at least at one of the
  ends to a $k$-periodic orbit in $M\times [r_*^-,\,r_*^+]$.
  Such a constant $\sigma'$ exists since $A_h'=r h'' >0$ on
  $(r_*^--\delta,\, r_*^++\delta)$ and $A_h'=r h'' \geq 0$
    everywhere. Note that we can apply Theorem \ref{prop:location} to
  any $u$ as above with $E(u) < \sigma'$. Next we choose
  $0 < \sigma <\sigma'$ such that
  \[
    \sigma^{5/8}<\frac{\delta\sqrt{A_h(r_*^-)}}{2C\cdot(r_*^+)^{3/4}}.
  \]
It follows from \eqref{eq:prop_loc} that
$$
\inf_{\R \times S^1_k} r \big( u (s,t) \big) 
> r^- - \delta/2
$$
whenever $E(u) < \sigma$. Now, using the maximum principle,
\eqref{eq:max_prince}, we conclude that the image of $u$ is contained
in $M\times (r_*^--\delta,\, r_*^++\delta)$ whenever
$E(u) < \sigma$.  This completes the proof Theorem \ref{thm:location}
with $\sigma$ chosen as above.
\end{proof}

\begin{Remark}
  Note that in this argument at least one of the points $r^+$ and
  $r^-$ is in the interval $[r_*^-,\,r_*^+]$ but \emph{a priori} we do
  not know which one. When this is $r^+$, a small energy trajectory
  $u$ is contained in $M\times (r_*^--\delta,\, r_*^+]$.
  However, this is not necessarily the case when
  $r^+\not\in [r_*^-,\,r_*^+]$, e.g., if $r^-=r_*^-=r^+_*<r^+$.
\end{Remark}

\subsection{Proof of Theorem \ref{prop:location}: The $r$-range}
We carry out the proof in four steps. First, we show that the average
in $t\in S^1_k$ of $r\big(u(s,t)\big)$ satisfies the differential
inequality \eqref{eq:step1}. In the second step, we establish lower
and upper bounds on this average.  This is where the third derivative
condition on $h$ becomes essential. The goal of the third step is to
prove inequality \eqref{eq:step3}, which is central to the proof and
gives an upper bound on the amount of time $u(s,t)$ spends below a
certain fixed $r$-level.  Finally, in the last step, we finish the
proof of the theorem by combining the key inequality,
\eqref{eq:step3}, with the pointwise upper bound \eqref{eq:salamon} on
$\| \p_s u\|$.

\subsubsection{Step 1} 
Fix a semi-admissible Hamiltonian $H(x,r)=h(r)$, an iteration
$k \in \N$ and a Floer cylinder $u \colon \R \times S_k^1 \to \WW$ of
$H^{\sharp k}$ as in Theorem \ref{prop:location}. Let us denote by
$u_s:=u(s,\cdot) \colon S_k^1 \to \WW$ the $s$-slice of the cylinder
$u$. In this step we show that
\begin{equation}
\label{eq:step1}
\frac{d}{ds} \int_0^k r (u_s)\, dt \leq - k A_h(r^-)
+ \int_0^k A_h\big(r(u_s)\big)\, dt.
\end{equation}
Using \eqref{eq:complex_strc} and the Floer equation,
\eqref{eq:floer_2}, we have
\begin{align*}
  \partial_s (r (u))
&= dr (\partial_s u)  \\
&= dr \big(J (\partial_t u - X_H(u) ) \big)  \\
&= -r(u) \alpha (\partial_t u) + r(u) \alpha(X_H(u)) \\
&= - r(u) \alpha(\partial_t u) + r(u) h'(r (u))  \\
&= - \big [ r(u) \alpha(\partial_t u) - h(r(u)) \big]
    + \big[ r(u)h'(r (u))-h(r(u)) \big]. 
\end{align*}
Integrating over $S_k^1$ gives
\begin{equation}
\label{eq:step1_2}
\frac{d}{ds} \int_0^k r (u_s) dt =  - \A_{H^{\sharp k}} (u_s)
+ \int_0^k A_h\big(r(u_s)\big)\, dt.
\end{equation}
Recall that by \eqref{eq:floer_1} the action is decreasing along Floer
cylinders. Hence we have
\begin{equation}
\label{eq:step1_3}
\A_{H^{\sharp k}} (u_s)  \geq \A_{H^{\sharp k}} (u_{\infty}) = kA_h(r^-).
\end{equation}
Recall that by our conventions $r^{\pm}= r(u_{\mp \infty})$.
Then the desired inequality, \eqref{eq:step1}, follows from
\eqref{eq:step1_2} and \eqref{eq:step1_3}.

\subsubsection{Step 2} 
From now on we assume that the image of $u$ is contained in a region
where $h'''(r) \geq 0$. In this step we show that
\begin{equation}
\label{eq:step2}
\frac{A_h(r^-)}{A_h(r^+)} k r^+ \leq
\inf_{s\in \R} \int_0^k r (u_s)\, dt  \leq kr^+ .
\end{equation}
The upper bound for the middle term follows from the maximum
principle, \eqref{eq:max_prince}. Let us prove the lower bound.

\begin{Remark}
\label{rmk:step2}
Note that since $E(u) =k (A_h(r^+) -A_h(r^-))$, the lower bound in
\eqref{eq:step2} can also be written as $kr^+ - E(u)r^+/A_h(r^+)$.
\end{Remark}

We will need the following claim, where in the setting of the theorem
we can take $r_0= r_*^+ + \delta$.

\smallskip

\noindent
\textit{Claim 1:} The function $A_h(r)/r$ is non-decreasing on
$[1,\, r_0]$, provided that $h''' \geq 0$ on this interval.

\smallskip

Assuming the claim, let us prove the first inequality
in \eqref{eq:step2}, i.e., 
\begin{equation}
\label{eq:step22}
\frac{A_h(r^-)}{A_h(r^+)} k r^+ \leq
\int_0^k r (u_s)\, dt 
\end{equation}
for all $s\in (-\infty,\, \infty]$. First, note that by the claim
\[
  \frac{A_h(r^-)}{A_h(r^+)} k r^+  \leq kr^-.
\]
Hence \eqref{eq:step22} holds at $s=0$ and  $s=\infty$. It remains to verify
\eqref{eq:step22} at the critical points of the function
$$
s\mapsto \int_0^k r (u_s)\, dt.
$$
Thus let $s_0$ be a critical point. Using \eqref{eq:step1}, the
maximum principle \eqref{eq:max_prince} and the claim above, we have
\[
  k A_h(r^-) \leq \int_0^k A_h\big(r(u_{s_0})\big)\, dt
  \leq \frac{A_h(r^+)}{r^+}  \int_0^k r(u_{s_0})\, dt. 
\]
It follows that \eqref{eq:step22} holds at the critical points as
well. This completes the proof of \eqref{eq:step2}, assuming the
claim.

\begin{proof}[Proof of Claim 1]
  Recall that $A_h(r)=r h'-h$. Dividing by $r$ and differentiating, we
  have
$$
\frac{d}{dr} \big(A_h(r)/r\big)=\frac{r A'_h-A_h}{r^2}
= \frac{r^2 h''-r  h'+ h}{r^2}.
$$
Thus $d\big(A_h(r)/r\big)/dr\geq 0$ whenever
$f=r^2 h''-r h'+ h\geq 0$. Since $h'''\geq 0$, the function $h''$ is
monotone increasing. This together with the fact that $h'(1)=0$
(since $H(x,r) =h(r)$ is semi-admissible) implies that
$$
h'(r) \leq (r-1) h''(r).
$$
 Indeed,
$$
h'(r) = \int_1^r h''(\xi)\, d\xi \leq \int_1^r h''(r)\,d\xi
= (r-1)h''(r).
$$
Hence,
$$
f=r(r h''- h')+ h\geq r\big(rh''-(r-1)h''\big) + h = rh''+h\geq 0.
$$
This argument shows that $A_h(r)/r$ is strictly increasing on
$[1,\, r_0]$ unless $h\equiv 0$, which in the setting of the theorem
is ruled out by that $h''>0$ on $(1,\, \rmax)$.
\end{proof}

\subsubsection{Step 3} 
Let $\rho >0$ and $\mu(s,\rho)\in [0,\,k]$ be the total time that the
slice $u_s$ spends under the level $r=r^+ -\rho$. In other words,
\[
  \mu(s, \rho):= \Leb \big\{ t \mid r(u(s,t)) \leq r^+ -\rho \big\}.
\]
The purpose of this step is to prove the inequality
\begin{equation}
\label{eq:step3}
\mu(s, \rho) \rho \leq \frac{ r^+}{A_h(r^+)} E(u).
\end{equation}
By the maximum principle, \eqref{eq:max_prince}, we have
\begin{equation}
\label{eq:step3_1}
\int_0^k r(u_s) \,dt \leq \big(k-\mu(s,\rho) \big) r^+
+ \mu(s,\rho) (r^+ -\rho) = kr^+ - \mu(s,\rho) \rho.
\end{equation}
Next, combining \eqref{eq:step2}, Remark \ref{rmk:step2} and
\eqref{eq:step3_1}, we obtain
\[
  kr^+ - \frac{r^+}{A_h(r^+)} E(u) \leq kr^+ - \mu (s,\rho) \rho,
\]
from which \eqref{eq:step3} immediately follows. 

\subsubsection{Step 4} In this step we finish the proof of Theorem
\ref{prop:location}. Let $\eps>0$ and $C'>0$ be as in Remark
\ref{rmk:salamon}, and assume that $E(u)<\eps$. If
$ r^- \leq \inf_{\R \times S^1_k} r(u)$, there is nothing to
show. Hence we can assume without loss of generality that
$ r^- > \inf r(u) $.  Fix $\eta \in \big(0,\, r^- -\inf r(u)\big)$.

Next, we choose $(s_0, t_0) \in \R \times S_k^1$ with
$r\big(u(s_0,t_0)\big) < r^- - \eta$. Notice that by the
Bourgeois--Oancea monotonicity property, \eqref{eq:monotone}, the
slice $u_{s_0}$ has to rise at least to the level $r=r^-$. We will
need the following claim.

\smallskip

\noindent
\textit{Claim 2:} For every $t_1 \in S_k^1$ with
$r\big(u(s_0, t_1)\big) \geq r^--\eta/2$, we have
\[
\eta/2  <  C' \sqrt{r^+} E(u)^{1/4} \vert t_1 -t_0 \vert.
\]

\smallskip

First, let us prove Theorem \ref{prop:location} assuming the
claim. Set $\rho = (r^+ - r^-) + \eta/2$. From the claim 
and
Bourgeois--Oancea monotonicity, \eqref{eq:monotone}, we obtain
\begin{equation}
\label{eq:step4_1}
\eta/2\leq  C' \sqrt{r^+} E(u)^{1/4} \mu(s_0, \rho). 
\end{equation}
Note that \eqref{eq:monotone} guarantees the existence of a
  time $t_1 \in S_k^1$ as in the claim. Combining \eqref{eq:step4_1}
with \eqref{eq:step3}, we see that
\[
  \eta^2/4\leq (r^+ - r^-)\eta/2 + \eta^2/4
  \leq \frac{C'\cdot (r^+)^{3/2}}{A_h(r^+)} E(u)^{5/4}.
\]
Next, we take the square root and get
\begin{equation}
\label{eq:step4_2}
\eta \leq
\frac{ \sqrt{4C'}\cdot (r^+)^{3/4} } {\sqrt{A_h(r^+)}} E(u)^{5/8}.
\end{equation}
Since \eqref{eq:step4_2} holds for all
$\eta \in (0,\, r^- -\inf r(u))$, it holds in particular for
$\eta= r^- -\inf r(u)$. Setting $C=\sqrt{4C'}$ completes the proof of
the theorem.

\begin{proof}[Proof of Claim 2]
We have
\begin{equation}
  \label{eq:eta}
  \eta/2 < r\big(u(s_0, t_1)\big) - r\big(u(s_0, t_0)\big) =
  \int_{t_0}^{t_1} dr \big(\partial_t u(s_0,t)\big)\, dt.
\end{equation}
Let us rewrite the integral using the identity 
\[
  dr(\partial_t u) = r(u)\alpha(\partial_s u) +
  r(u)\alpha\big(JX_H(u)\big) = r(u)\alpha(\partial_s u),
\]
which follows from \eqref{eq:complex_strc}, the Floer equation
\eqref{eq:floer_2} and the condition that $\alpha(JX_H)=0$. Then
\eqref{eq:eta} turns into
\begin{align*}
  \eta/2 &< \int_{t_0}^{t_1} r\big(u(s_0, t)\big)\alpha
           \big(\partial_s u(s_0, t)\big) \, dt \\
         & \leq  \int_{t_0}^{t_1} r\big(u(s_0, t)\big)
           \frac{\|\partial_s u(s_0, t)\|}
           {\big\| R_{\alpha} \big(u(s_0,t)\big)\big\|}\, dt \\
         &= \int_{t_0}^{t_1} \sqrt{r\big(u(s_0, t)\big)}
           \| \partial_s u(s_0, t) \|\, dt \\
         & \leq   \sqrt{r^+}  \max_{t_0 \leq t\leq t_1}
           \| \partial_s u(s_0, t) \|\cdot | t_1-t_0| \\
         & \leq  C \sqrt{r^+}  E(u)^{1/4} \vert t_1-t_0 \vert,
\end{align*}
where $R_\alpha$ is the Reeb vector field of $\alpha$. In the
  second line we compare $\|\partial_s u(s_0, t)\|$ with the length of
  the projection of $\partial_s u(s_0, t)$ in the Reeb
  direction and in the third we have used the identity
\[
\| R_{\alpha}\|^2 = \omega(R_{\alpha}, JR_{\alpha})= r.
\]
The fourth line follows from the maximum principle,
\eqref{eq:max_prince}, and the fifth line is a consequence of Remark
\ref{rmk:salamon}. This concludes the proof of the claim and the
theorem.
\end{proof}

\subsubsection{Proof of the Bourgeois--Oancea monotonicity,
  \eqref{eq:monotone}}
\label{sec:BO}
Here, for the sake of completeness, we include the proof of
\eqref{eq:monotone} closely following the argument from \cite[Lemma
2.3]{CO}. Arguing by contradiction, assume that there exists
$s_0 \in \R$ such that
\[
\max_{t \in S^1_k}  r\big(u(s_0,t) \big) < r^-.
\]
It follows from the maximum principle, \eqref{eq:max_prince}, that
\[
\max_{t \in S^1_k}  r\big(u(s,t) \big) \leq r^-
\] 
for all $s \in [s_0, \infty)$. Using \eqref{eq:step1} and the
condition that $A_h'(r) \geq 0$ we see that
\[
  \frac{d}{ds} \int_0^k r (u_s)\, dt \leq -  kA_h(r^-)
  +\int_0^k A_h\big(r(u_{s})\big)\, dt \leq 0
\]
for all $s \in [s_0,\, \infty)$. On the other hand, by the assumption,
we have
\[
\int_0^k  r\big(u(s,t)\big)\, dt \leq  kr^--\eps.
\]
for $s=s_0$ and some $\eps>0$. Then the same is true for all
$s \in [s_0,\, \infty)$. This contradicts the fact that the
left-hand side converges to $k r^-$ as $s\to\infty$,
completing the proof of \eqref{eq:monotone}. \hfill \qed

\subsection{Energy bounds}
\label{sec:energy}
There are three sufficiently different approaches to the proof of
crossing energy type results. The first one is based on target Gromov
compactness; see \cite{Fi}. This approach is used in the original
proof in \cite{GG:hyperbolic} and then in \cite{CGG:Entropy, GG:PR,
  Me:Entropy} and more recently and in a more sophisticated form in
\cite{CGP}. The second approach uses the upper bound
\eqref{eq:salamon} from, e.g., \cite{Sa90, Sa}, quoted in Remark
\ref{rmk:salamon} and is also closely related on the conceptual level
to (the proof of) Gromov compactness. This method is pointed out in
\cite[Rmk.\ 6.4]{CGG:Entropy}. The third approach utilized in
\cite{GGM} is based on finite-dimensional approximations in Morse
theory. This approach does not fit well with general Floer theory
techniques, but either of the first two methods can be used to prove
Theorem \ref{thm:CE}. Here we have chosen the second one, which is
more hands-on and direct, albeit somewhat less general.

\begin{proof}[Proof of Theorem \ref{thm:CE}]
  Let $\tz=(z,r_*)$ and $H$ be as in the statement of the theorem. Let
  also $u$ be a Floer cylinder of $H^{\sharp k}$ asymptotic to $\tz^k$
  at either end. Our goal is to show that the energy $E(u)$ cannot be
  arbitrarily small. Clearly, to this end, we can require $E(u)$ to be
  sufficiently small which we will do several times throughout the
  process. Furthermore, we can assume that $k$ is large enough, since
  for a fixed $k\in \N$ the desired lower bound directly follows from
  Gromov compactness; see Section \ref{sec:CE}.

  Setting $r^-_*=r_*=r^+$ and taking a sufficiently small $\delta>0$,
  we see that by \eqref{eq:h'''1} the condition \eqref{eq:h'''2} of
  Theorem \ref{thm:location} is satisfied. As a consequence, the image
  of $u$ is contained in $M\times (r_*-\delta,\,r_*+\delta)$
  when $E(u)$ is sufficiently small. From now on we will assume that
  this is the case.

  Next, \eqref{eq:salamon} from Remark \ref{rmk:salamon} holds since
  $E(u)$ is assumed to be small. Therefore,
  $$
  \big\| \partial_s u(s, t) \big\| < C' E(u)^{1/4}=:\eps,
  $$
  for some constant $C'$. (This $\eps$ is unrelated to $\eps$ and
  $\eps'$ in Remark \ref{rmk:salamon}.) By the Floer equation,
  \eqref{eq:floer_2},
  $$
  \| \p_t u - X_H\| < \eps,
  $$
  for all $s\in\R$, where $X_H=h' R_\alpha$. (This is another point
  where it is more convenient to work with $H^{\sharp k}$ than with
  $kH$; for $X_{H^{\sharp k}}=X_H$ is independent of $k$.) Denote by
  $v$ the projection of $u$ to $M$. Then we also have
    \begin{equation}
      \label{eq:difference0}
  \big\| \p_t v - h'(r(u))R_\alpha \big\| < \eps.
\end{equation}

Fix a compact isolating neighborhood $V$ of $z$ in
$M$. Without loss of generality, we can assume that $V$ is a closed
tubular neighborhood of $z(\R)$ in $M$. We note that $v$ cannot be
entirely contained in $V$ or, equivalently, $u$ cannot be contained in
$U:=V\times(r_*-\delta,\,r_*+\delta)$. Indeed, if this were
the case, $u$ would be asymptotic to some orbit $\tz^m$ of
$H^{\sharp k}$ with $m\neq k$ at the other end. (We cannot have $m=k$ since the action difference must be non-zero.)
This, however, is impossible, for the free homotopy classes of these
two orbits are different in $U$ because the free homotopy
classes of $z^k$ and $z^m$ are different in $V$. Hence the image of
$v$ intersects $\p V$.

Furthermore, it is not hard to see that, since $V$ is isolating, there
exists $\tau_0>0$ and $\eta >0$ such that
  \begin{equation}
    \label{eq:V-flow}
  \max_{\tau\in [-\tau_0,\,\tau_0]}
  d\big(V,\varphi_\alpha^\tau(x)\big)>\eta
\end{equation}
for all $x\in \p V$. Here $d$ is some fixed distance on $M$ and
$\varphi_\alpha$ is the Reeb flow of $\alpha$.

Since $u$ is asymptotic to $\tz^k$, for some $s\in\R$ the curve
$v(s,S^1_k)$ is contained in $V$ while $v(s,t)\in \p V$ for some $t\in S^1_k$. Without loss of generality, we may assume that
$t=0$. Thus, $v(s,t)\in V$ for all $t\in S^1_k$ and $v(s,0)\in \p
V$. Let us reparametrize the map $t\mapsto v(s,t)$ by using the change
of variables $t=t(\tau)$ so that $$ t'(\tau)= h'\big(r(u(s,t))\big),
  $$
  and set
  $$
  \gamma(\tau)=v(s,t(\tau)).
  $$

  The curve $\gamma$ is parametrized by the circle $S^1_{\tau_k}$ with
$$
\frac{k}{h'(r_*+\delta)}
\leq \frac{k}{\max_{t\in S^1_k} h'\big(r(u(s,t))\big)}\leq \tau_k
\leq \frac{k}{\min_{t\in S^1_k}  h'\big(r(u(s,t))\big)}
\leq \frac{k}{h'(r_*-\delta)}.
$$
Here the lower bound follows from the maximum principle and the upper
bound from Theorem \ref{thm:location}. We will take $k$ large enough
so that $\tau_k\geq 2\tau_0$. The image of $\gamma$ is contained in
$V$ and $x:=\gamma(0)\in \p V$.

We infer from \eqref{eq:difference0} and Theorem
\ref{thm:location} that
$$
\big\| \dot{\gamma}(\tau)- R_\alpha\big( \gamma(\tau) \big) \big\|
< \frac{\eps}{\min_{t\in S^1_k} h'\big(r(u(s,t))\big)}
\leq \frac{\eps}{h'(r_*-\delta)},
$$
where the dot stands for the derivative with respect to $\tau$. This
is the point where Theorem \ref{thm:location} becomes crucial. By the
standard Gronwall inequality type argument (see, e.g.,
\cite{Br, GG:PR}) and the above upper bound,
\begin{equation}
  \label{eq:gamma-phi}
  d\big(\gamma(t), \varphi^{\tau}(x)\big)
  \leq \frac{e^{C |\tau|}\eps}{h'(r_*-\delta)},
\end{equation}
for some constant $C$. (In particular, $\gamma$ is a pseudo-orbit of
the Reeb flow, but we will not directly use this fact.) Combining
\eqref{eq:V-flow} and \eqref{eq:gamma-phi}, we obtain
$$
\eta\leq \frac{e^{C |\tau|}\eps}{h'(r_*-\delta)}
$$
for some $\tau\in [-\tau_0,\,\tau_0]$. As a consequence,
$$
\eta\leq \frac{e^{C \tau_0}\eps}{h'(r_*-\delta)},
$$
which gives a lower bound on $\eps$ and hence on $E(u)$, independent
of $k$. More explicitly, we have
$$
E(u) \geq \big[\eta h'(r_*-\delta) e^{-C \tau_0}/ C'\big]^4>0.
$$
This concludes the proof of Theorem \ref{thm:CE}.
\end{proof}

\section{Appendix: The local Le Calvez--Yoccoz
  theorem in dimension two}
\label{sec:LCY}
For the sake of completeness, here we include a topological proof of
the local version of the Le Calvez--Yoccoz theorem, \cite{LCY},
closely following \cite[Prop.\ 3.1]{Fr} and \cite{FM}.

To state the result, let us recall a few classical notions from
dynamics. The index $i(\varphi,z)$ of an isolated fixed point $z$ of a
smooth map $\varphi\colon \R^n\to\R^n$ is defined as the Brower degree
of the associated map
\begin{align*}
\partial B^n(z,\epsilon)\to \partial B^n(0,1),\qquad
y\mapsto \frac{\varphi(y)-y}{\|\varphi(y)-y\|},
\end{align*}
for any $\epsilon>0$ small enough. Here $B^n(z,\epsilon)$ is the ball
of radius $\epsilon$ centered at $z$. The definition extends to
isolated fixed points of smooth maps of manifolds by employing
arbitrary local coordinates. If $\varphi\colon M\to M$ is a
diffeomorphism, we denote by $\inv(\varphi,U)$ the largest
$\varphi$-invariant subset contained in $U\subset M$, i.e.
\begin{align*}
\inv(\varphi,U):=\bigcap_{k\in\Z} \varphi^k(U).
\end{align*}
A compact $\varphi$-invariant subset $\Lambda$ is called \emph{locally
  maximal} when $\inv(\varphi,U)=\Lambda$ for some neighborhood
$U\subset M$ of $\Lambda$; cf.\ Section \ref{sec:CE}.  A fixed point
$z$ of $\varphi$ is called \emph{irrationally elliptic} when the
eigenvalues of the linear map $d\varphi(z)$ lie on the unit circle of
the complex plane and are not roots of 1. This is equivalent to that
$\varphi$ is elliptic and all iterates $\varphi^k$ or $\varphi$ are
non-degenerate at $z$.

\begin{Theorem}[\cite{Fr,FM}]
  \label{thm:LCY-F}
  An irrationally elliptic fixed point of a surface symplectomorphism
  is never a locally maximal invariant set.
\end{Theorem}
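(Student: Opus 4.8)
The assertion is local, so I would work in coordinates with $z=0$ and $\varphi$ an orientation- and area-preserving homeomorphism of a disk $D\subset\R^2$ with $d\varphi(0)=R_{2\pi\alpha}$, $\alpha\notin\Q$; shrinking $D$, we may assume $0$ is the only fixed point of $\varphi$ in $\overline D$, and, since this is exactly what we want to contradict, that for some closed topological disk $N$ with $0\in\mathring N\subset N\subset D$ one has $\inv(\varphi,N)=\{0\}$. The goal is then to produce a $\varphi$-invariant subset of $N$ other than $\{0\}$; it suffices to find a periodic orbit of $\varphi$ of period $>1$ contained in $N$.

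The tool I would use is Le Calvez's equivariant transverse foliation: since $\varphi|_D$ is a homeomorphism isotopic to the identity with $\Fix(\varphi|_D)=\{0\}$, there is an oriented topological foliation $\mathcal F$ of $D\setminus\{0\}$ such that every orbit segment of $\varphi$ in $D\setminus\{0\}$ is positively transverse to $\mathcal F$. The structural heart of the proof is to read off, from the fact that $d\varphi(0)=R_{2\pi\alpha}$ is an irrational rotation, the local shape of $\mathcal F$ near $0$: positive transversality together with area preservation (Poincar\'e recurrence) rules out any closed leaf encircling $0$, since such a leaf would be crossed by nearby orbits consistently in one radial direction; hence the leaves accumulating on $0$ are arcs, and the rotation of the dynamics around $0$ then forces, inside any prescribed neighbourhood of $0$ — in particular inside $N$ — a leaf $\lambda$ both of whose ends limit on $0$, i.e.\ a simple loop based at $0$ transverse to the dynamics and bounding a disk $B\subset N$.

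Given such a loop, I would finish by a Brouwer-type argument: positive transversality means $\varphi$ pushes $B$ across $\lambda$ in a definite direction, so $B,\varphi(B),\dots$ form a periodic free disk chain near $0$ all of whose members lie in $N$; by Franks' lemma (the area-preserving/Brouwer form of the Poincar\'e--Birkhoff theorem), $\varphi$ then has a periodic point of period $>1$ in the union of the translates of $B$, hence in $N$ — the desired invariant subset of $N$ distinct from $\{0\}$, contradicting local maximality. The main obstacle is the structural step of the previous paragraph: controlling $\mathcal F$ near an \emph{irrationally} elliptic fixed point and extracting a leaf loop confined to a prescribed neighbourhood. This is genuinely two-dimensional — the crude estimate comparing $\varphi^k$ with $R_{2\pi k\alpha}$ is useless, because for Liouville $\alpha$ the iteration orders needed to ``close up'' grow far too fast — which is precisely why no analogue is known in higher dimensions. (Equivalently, one may pass to the oriented blow-up of $\varphi$ at $0$, reducing to an area-preserving homeomorphism near a one-sided boundary circle $C$ with irrational rotation number and $C$ isolated as an invariant set, and derive a contradiction from Poincar\'e recurrence together with Birkhoff's theory of invariant circles; this is the route of \cite{Fr,FM}.)
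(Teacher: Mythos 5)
Your proposal has a genuine gap at its core, and it is exactly the step you flag as ``the main obstacle'': the claim that, near an irrationally elliptic fixed point, a Le Calvez transverse foliation $\mathcal F$ of $D\setminus\{0\}$ must contain, inside any prescribed neighbourhood $N$, a leaf $\lambda$ with both ends at $0$ (a petal). As stated this is simply false: for the rigid rotation $R_{2\pi\alpha}$ itself the radial foliation is positively transverse and has no petals, so ``rotation of the dynamics around $0$'' cannot force one. Nor does the index help you here: an isolated singularity of a transverse foliation has index $1+(e-h)/2$ in terms of elliptic and hyperbolic sectors, so index $1$ (which is what irrational ellipticity gives for every iterate) is perfectly compatible with a petal-free, focus-like local picture. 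The petal would have to be extracted from the local maximality hypothesis $\inv(\varphi,N)=\{0\}$, presumably through an analysis of all iterates $\varphi^m$ and their foliations, and your sketch supplies no such mechanism; note also that irrational ellipticity never enters your argument in a load-bearing way, whereas the theorem genuinely needs it (cf.\ the monkey-saddle remark after Corollary \ref{cor:LCY-F}: a rationally elliptic point can be locally maximal). Two smaller points: if a petal bounding $B\subset N$ did exist, positive transversality gives $\varphi(\overline B)\subset B\cup\{0\}$ (or the reverse), and area preservation alone already yields the contradiction -- the detour through free disk chains and Franks' lemma is unnecessary and, as written, not set up correctly (freeness of the disks is not established); and Poincar\'e recurrence is not available for a map defined only on a disk that it does not preserve, so invoking it needs justification.

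Finally, your closing parenthesis misattributes the method: the route of \cite{Fr,FM}, which is what the paper follows, is not a blow-up/Birkhoff invariant-circle argument but a Conley-index computation. One takes an isolating block pair $(V,W)$ furnished by Easton's theorem, observes that area preservation forces $W\neq\varnothing$ and $\partial V\not\subset W$ (so $\H_2(V/W;\Q)=0$), and applies the Lefschetz fixed point theorem to the iterates of the induced map on $V/W$; since $i(\varphi^m,z)=1$ for all $m$ by irrational ellipticity and the index at $[W]$ is $1$, this yields $\tr(\widetilde\varphi_1^m)=-1$ for all $m$, which is impossible because $\tr(L^m)\geq 0$ for infinitely many $m$ for any linear endomorphism $L$. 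A foliation-theoretic proof in the spirit of Le Calvez may well exist, but your write-up does not yet contain one.
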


Applying this theorem to the Poincar\'e return map of a closed
irrationally elliptic orbit in dimension three, we obtain the
following result.

\begin{Corollary}
  \label{cor:LCY-F}
  A closed irrationally elliptic orbit of a volume-preserving flow on
  a 3-manifold is never a locally maximal set.
\end{Corollary}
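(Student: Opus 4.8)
The plan is to reduce the statement to Theorem~\ref{thm:LCY-F} by passing to a transverse Poincar\'e section. Let $\phi^t$ be the flow on the closed $3$-manifold $M$, generated by a vector field $X$ preserving a volume form $\mu$, and let $\gamma$ be the closed orbit in question, of period $\tau$ and irrationally elliptic by hypothesis. First I would fix a small embedded open disk $D\subset M$ transverse to $X$ with $\gamma\cap D=\{p\}$, together with the first-return map $\varphi$, defined on a neighborhood of $p$ in $D$; shrinking $D$, the return time on this neighborhood is as close to $\tau$ as we wish. Two properties of $\varphi$ are then immediate. On one hand, $\varphi$ preserves an area form: the flux $2$-form $\beta:=\iota_X\mu$ is closed, since $d\beta=\mathcal{L}_X\mu=0$; it restricts to a nowhere-vanishing $2$-form on $D$ because $X$ is transverse to $D$; and from $\iota_X\beta=0$ together with $\mathcal{L}_X\beta=0$ one gets $\varphi^*(\beta|_D)=\beta|_D$ in the usual way (the term involving the differential of the return time in $\varphi^*\beta$ drops out, being contracted with $X$). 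On the other hand, $p$ is an irrationally elliptic fixed point of $\varphi$ in the sense of the Appendix: by definition of an irrationally elliptic closed orbit, the linearized Poincar\'e return map $d\varphi(p)$ has all eigenvalues on the unit circle and none a root of unity. Extending $\varphi$ near $p$ to a genuine symplectomorphism of a surface (which changes nothing in a small neighborhood of $p$), Theorem~\ref{thm:LCY-F} tells us that $p$ is \emph{not} a locally maximal invariant set of $\varphi$.

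It then remains to transfer local maximality between the flow and the return map, and this is the step requiring a little care. Arguing by contradiction, suppose $\gamma$ were a locally maximal invariant set of $\phi^t$, and fix a compact isolating neighborhood $N$ of $\gamma$, so that every complete $\phi^t$-orbit contained in $N$ equals $\gamma$. Using the flow-box picture near $\gamma$ together with compactness, I would choose the section $D$ (with compact closure $\bar D$) small enough that every flow segment $\{\phi^t(y):0\le t\le \tau+\epsilon\}$ with $y\in\bar D$ stays in $N$; this is possible because the corresponding segment through $p$ is exactly $\gamma\subset\operatorname{int}N$, a compact set, and these segments depend continuously on the initial point. Now if $y\in D$ has its full $\varphi$-orbit defined and contained in $D$, i.e.\ $y\in\inv(\varphi,D)$, then concatenating the flow segments joining consecutive iterates $\varphi^k(y)$ and $\varphi^{k+1}(y)$ (each of length close to $\tau$, hence inside $N$ by the choice of $D$) exhibits the entire $\phi^t$-orbit of $y$ inside $N$; since $N$ is isolating, this orbit is $\gamma$, whence $y=p$. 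Thus $\inv(\varphi,D)=\{p\}$, so $p$ is a locally maximal invariant set of $\varphi$, contradicting the previous paragraph. Hence $\gamma$ is not locally maximal, which is Corollary~\ref{cor:LCY-F}.

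The expected main obstacle is precisely this last transfer: checking that an isolating neighborhood of $\gamma$ for the flow produces an isolating neighborhood of $p$ for the return map. All the genuine dynamical content is carried by Theorem~\ref{thm:LCY-F}; the remaining ingredients --- the tubular-neighborhood/flow-box structure near a periodic orbit, the invariance of the flux form $\iota_X\mu$, and the matching of the two notions of ``irrationally elliptic'' --- are standard.
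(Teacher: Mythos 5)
Your proof is correct and follows exactly the route the paper intends: the paper derives the corollary in one line by applying Theorem~\ref{thm:LCY-F} to the Poincar\'e return map, and you simply fill in the standard details (area-preservation of the return map via the flux form $\iota_X\mu$, matching of the irrational ellipticity, and the transfer of local maximality between the flow and the section). The transfer argument via an isolating neighborhood $N$ and a small section $D$ is sound, so there is nothing to correct.
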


\begin{proof}[Proof of Theorem \ref{thm:LCY-F}]
  Let $z$ be an irrationally elliptic fixed point of a surface
  symplectomorphism $\varphi$. In particular, $d\varphi^m(z)$ has
  eigenvalues in $S^1\setminus\{1\}$ for all $m\geq 1$, and $z$ is an
  isolated fixed point of $\varphi^m$. Its indices are given by
  \begin{equation}
    \label{eq:index-varphi}
    i(\varphi^m,z)=1,\qquad\forall m\geq 1,
  \end{equation}
  see, e.g., \cite[p.\ 320]{KH}. Let us assume by contradiction that
  $z$ is a locally maximal $\varphi$-invariant subset. Under this
  assumption, a theorem due to Easton~\cite{Ea} implies that $z$
  possesses so-called isolating blocks in the sense of Conley
  theory. More precisely, following Franks and Misiurewicz \cite[Sec.\
  3]{Fr}, there exist an arbitrarily small connected compact
  neighborhood $V\subset M$ of $z$ and an arbitrarily small compact
  neighborhood $W\subset V\setminus\{z\}$ of the exit set
  \[
    V^-:=\big\{y\in V\ \big|\
    \varphi(y)\not\in\mathit{interior}(V)\big\}
  \] 
  such that $\inv(\varphi,\overline{V\setminus W})=\{z\}$,
  $\varphi(W)\cap\overline{V\setminus W}=\varnothing$, and $(V,W)$ is
  homeomorphic to a finite simplicial pair. Since $\varphi$ is
  area-preserving, we must have $V^-\neq\varnothing$, and
  thus \[W\neq\varnothing.\] If $\partial V\subset V^-$, since $\phi$
  is area-preserving, we must have $\varphi(V)=V$, but this would
  contradict the local maximality of $z$. Therefore,
  $\partial V\not\subset V^-$ and we can choose $W$ small enough so
  that
\begin{align}
\label{e:pV_not_in_W}
 \partial V\not\subset W.
\end{align}
We consider the induced quotient map
\begin{align*}
\widetilde\varphi:V/W\to V/W.
\end{align*}
Notice that this map has precisely two fixed points: $z$ and
$[W]$. Since $(V,W)$ is a finite simplicial pair, we can apply
Lefschetz fixed-point theorem to the iterates of $\widetilde\varphi$
and obtain
\begin{align}
\label{e:Lefschetz}
  \tr(\widetilde\varphi_0^m)-\tr(\widetilde\varphi_1^m)
  +\tr(\widetilde\varphi_2^m)
  = i(\varphi^m,z)+i(\widetilde\varphi^m,[W]),
\end{align}
where $\widetilde\varphi_k:\H_k(V/W;\Q)\to \H_k(V/W;\Q)$ is the
homomorphism induced by $\widetilde\varphi$. Since $V/W$ is connected,
we have
\[
  \tr(\widetilde\varphi_0)=1.
\]
By~\eqref{e:pV_not_in_W}, $\H_2(V/W;\Q)=0$. Therefore, 
\[
  \tr(\widetilde\varphi_2)=0.
\]
For any sufficiently small neighborhood $U\subset V/W$ of $[W]$, we
have $\widetilde\varphi(U)=[W]$, and hence the index of the fixed
point $[W]$ is given by
\begin{align*}
i(\widetilde\varphi^m,[W])=1.
\end{align*}
Putting these equations together and using \eqref{eq:index-varphi}, we
see that equation~\eqref{e:Lefschetz} becomes
\begin{align*}
 \tr(\widetilde\varphi_1^m)=-1.
\end{align*}
This is impossible. Indeed, a simple linear algebra argument
  \cite[Lemma~2.3]{Fr} shows that, for every linear endomorphism $L$
of a finite-dimensional vector space, $\tr(L^m)\geq0$ for infinitely
many integers $m\geq1$.
\end{proof}

\begin{Remark}
  The requirement that $z$ is irrationally elliptic in Theorem
  \ref{thm:LCY-F} is essential. It is easy to construct an
  area-preserving diffeomorphism $\varphi\colon \R^2\to\R^2$ with a
  non-degenerate rationally elliptic fixed point $z$ such that
  $\varphi^k$ has a monkey saddle at $z^k$, for some $k\in\N$, and
  hence $z$ is locally maximal.
\end{Remark}

\begin{Remark}
  It is worth pointing out that the proof of Theorem \ref{thm:LCY-F}
  is purely two-dimensional. The dimension assumption is crucially
  used in the fact that the homology $\H_*(V/W;\Q)$ is concentrated in
  only two degrees. As we have noted in the introduction it is not
  known if Theorem \ref{thm:LCY-F} holds in higher dimensions. In
  other words, there may exist a Hamiltonian diffeomorphism $\varphi$
  with an elliptic fixed point $z$ which is non-degenerate for all
  iterates $\varphi^m$ and locally maximal. This seems particularly
  likely when the linearization $D\varphi$ at $z$ is not required to
  be semi-simple.
\end{Remark}

%

\newpage

\end{document}